\documentclass[10pt]{scrartcl}
\usepackage{fullpage}
\usepackage{microtype}

\usepackage{amsmath}
\usepackage{amsfonts}
\usepackage{amsthm}
\usepackage{amssymb}

\theoremstyle{plain}
\newtheorem{theorem}{Theorem}[section]
\newtheorem{proposition}{Proposition}[section]
\newtheorem{lemma}{Lemma}[section]

\theoremstyle{definition}
\newtheorem{definition}{Definition}[section]

\usepackage{algorithm}
\usepackage{algpseudocodex}

\usepackage{hyperref}

\usepackage{chemformula}

\usepackage{xfrac}

\usepackage{authblk}
\usepackage[casechanger=auto,style=ext-numeric-comp, sorting=none, giveninits=true, isbn=false, doi=false,date=year,minbibnames=4, maxbibnames=4, articlein=false]{biblatex}
\DeclareFieldFormat[article,inbook,incollection,inproceedings,patent,unpublished]{titlecase:title}{\MakeSentenceCase*{#1}}
\usepackage{siunitx}
\DeclareSIUnit{\cal}{\text{cal}}
\DeclareSIUnit{\hartree}{\textit{E}_h}
\DeclareSIUnit{\angstrom}{\text{\AA}}
\sisetup{
  print-unity-mantissa = false,
}

\usepackage{mathtools}

\DeclareMathOperator{\conn}{conn}
\DeclareMathOperator{\comp}{comp}

\DeclarePairedDelimiter{\abs}{\lvert}{\rvert}
\DeclarePairedDelimiter{\norm}{\lVert}{\rVert}
\DeclarePairedDelimiter{\pdparen}{\lparen}{\rparen}
\DeclarePairedDelimiter{\pdbrack}{\lbrack}{\rbrack}
\DeclarePairedDelimiter{\pdbrace}{\{}{\}}
\DeclarePairedDelimiter{\pdprod}{\langle}{\rangle}


\providecommand\given{}
\newcommand\SetSymbol[1][]{%
  \nonscript\:#1\vert
  \allowbreak
  \nonscript\:
  \mathopen{}}
\DeclarePairedDelimiterX\Set[1]\{\}{%
  \renewcommand\given{\SetSymbol[\delimsize]}
  #1
}

\newcommand*{\bm}[1]{\mathbf{#1}}
\newcommand{\bbR}{\mathbb{R}}
\newcommand{\bbN}{\mathbb{N}}
\newcommand{\bbZ}{\mathbb{Z}}

\newcommand{\coloneq}{\vcentcolon=}

\addbibresource{mlebfm.bib}
\begin{document}
\title{On Multilevel Energy-Based Fragmentation Methods}

\author[a]{James Barker}
\author[a,b]{Michael Griebel}
\author[b,*]{Jan Hamaekers}

\affil[a]{{\small Institute for Numerical Simulation, University of Bonn, Friedrich-Hirzebruch-Allee 7, 53115 Bonn, Germany}}
\affil[b]{{\small Fraunhofer Institute for Algorithms and Scientific Computing SCAI, Schloss Birlinghoven 1, \mbox{53757 Sankt Augustin}, Germany}}
\affil[*]{{\small Corresponding author: \texttt{jan.hamaekers@scai.fraunhofer.de}}}

\date{\today}

\maketitle

\begin{abstract}
  Energy-based fragmentation methods approximate the potential energy of a molecular system as a sum of contribution terms built from the energies of particular subsystems. Some such methods reduce to truncations of the many-body expansion (MBE); others combine subsystem energies in a manner inspired by the principle of inclusion/exclusion (PIE). The combinatorial technique of Möbius inversion of sums over partially ordered sets, which generalizes the PIE, is known to provide a non-recursive expression for the MBE contribution terms, and has also been connected to related cluster expansion methods. We build from these ideas a very general framework for decomposing potential functions into energetic contribution terms associated with elements of particular partially ordered sets (posets) and direct products thereof. Specific choices immediately reproduce not only the MBE, but also a number of other existing decomposition forms, including, e.g., the multilevel ML-BOSSANOVA schema. Furthermore, a different choice of poset product leads to a setup familiar from the combination technique for high-dimensional approximation, which has a known connection to quantum-chemical composite methods. We present the ML-SUPANOVA decomposition form, which allows the further refinement of the terms of an MBE-like expansion of the Born-Oppenheimer potential according to systematic hierarchies of \emph{ab initio} methods and of basis sets. We outline an adaptive algorithm for the \emph{a posteori} construction of quasi-optimal truncations of this decomposition. Some initial experiments are reported and discussed.
\end{abstract}

\section{Introduction}

The generally high-dimensional nature of the electronic Schrödinger equation renders it inaccessible to conventional numerical techniques. Some recourse is possible to the specialized mechanisms of computational quantum chemistry~\cite{szabo1996, cances2003, echenique2007, helgaker2000, jensen2017}, but the involved costs still scale superlinearly, so these cannot be applied to larger molecular systems, especially not to an acceptable level of accuracy. A common tactic is to decompose the single overarching electronic problem into subproblems. Solutions to these can approximated independently, and combined back into an approximate solution for the original problem. Distinct versions of this tactic are recognisably present in \emph{composite methods}~\cite{zaspel2019, raghavachari2015, karton2022}, and in \emph{energy-based fragmentation methods}~\cite{gordon2011, collins2015, raghavachari2015, herbert2019}.

The working formulae for fragmentation methods are often related to or inspired by the well-known \emph{many-body expansion} (MBE)~\cite{suarez2009, richard2012, chinnamsetty2018, herbert2019}. Extensions apply counting arguments founded in the principle of inclusion/exclusion (PIE)~\cite{richard2012, richard2013}. Significant overlap exists here with those fragmentation methods which construct and identify their fragments using ideas from graph theory~\cite{le2012, weiss2010, heber2014, griebel2014, chinnamsetty2018, ricard2020, zhang2021, seeber2023}, and also with \emph{multilevel fragmentation methods}~\cite{dahlke2007, beran2009, rezac2010, bates2011, chinnamsetty2018, ricard2020, hellmers2021, zhang2021}, which combine the results of subsystem calculations performed at differing levels of theory.

Much work has gone into understanding the properties of and relationships between different fragmentation methods, often with an eye to their unification~\cite{richard2012, richard2013, richard2014, koenig2016, hellmers2021}. We suggest that the combinatorial theory of \emph{Möbius inversion}~\cite{rota1964, aigner1997, stanley2012}, which can be understood as an extension of inclusion/exclusion, provides a useful theoretical framework, not only for the comparison and analysis of existing fragmentation methods, but also for the development of new ones. The connection between Möbius inversion and the MBE is known~\cite{drautz2004, suarez2009}, although apparently unexplored in the modern fragmentation method context. But Möbius inversion has long been applied in cluster expansion methods, see, e.g.,~\cite{klein1986, drautz2004, lafuente2005}. These are closely related to the MBE~\cite{drautz2004}, and some are explicitly founded in graph theory, e.g.,~\cite{klein1986}.

We consider here a very general decomposition form over direct products of partially ordered sets (\emph{posets}). Specific choices of posets directly and deliberately reproduce existing decomposition forms well-known in mathematics. The simplest case would be a totally ordered set, i.e., a chain, and direct products of chains lead to grid-based decompositions like those underlying the \emph{combination technique}~\cite{griebel1992, bungartz2004, gerstner2003, hegland2003, hegland2007, harding2016, wong2016}. The choice instead of a single Boolean algebra, that is, an inclusion-ordered powerset $2^{\pdbrack{n}}$, delivers an \emph{ANOVA-like decomposition}~\cite{griebel2006, kuo2009}, cf.~\cite{rabitz1999}. Any subposet of a Boolean algebra is isomorphic to an ordered subset of the induced subgraphs of some undirected graph $G$ with vertex set $\pdbrack{n}$~\cite{nieminen1980}, and the structural properties of some such subsets make them either more or less appealing for our purposes. From the chemical perspective, the resulting decompositions either mimic or directly reconstruct, e.g., composite methods~\cite{zaspel2019}, the MBE, graph-based forms like the BOSSANOVA (\emph{Bond-Order diSSection ANOVA})~\cite{heber2014, griebel2014} decomposition and particular cases of the CGTCE~\cite{klein1986}, and finally, multilevel mechanisms such as ML-BOSSANOVA~\cite{chinnamsetty2018} and ML-FCR~\cite{hellmers2021}. A product of either a Boolean algebra or a subposet thereof with multiple chains extends these ideas to a new decomposition type that we call \emph{ML-SUPANOVA}.

This article is based on~\cite{barker2024}; further details including data, graphics, and experiments are given there. The structure is as follows: In Section~\ref{sec:decomp_techniques}, we progressively develop and motivate our new multilevel decomposition. Section~\ref{sec:fundamentals} formalizes a hierarchy of model approximations to the Born-Oppenheimer potential function, much like those used in composite methods. Section~\ref{sec:ebfms} establishes some basics regarding fragmentation methods. Section~\ref{sec:inversion} introduces the combinatorial tools that we need to build and operate our multilevel scheme, and demonstrates their application. In Section~\ref{sec:graphs}, we use these tools to uncover an issue in the BOSSANOVA setup, and consider the introduction of convex subgraphs as a partial panacea. Section~\ref{sec:ml} combines ideas from both composite methods and multilevel energy-based fragmentation methods, and introduces the final ML-SUPANOVA decomposition. We outline in Section~\ref{sec:adaptive} an adaptive \emph{a posteori} algorithm for generating cost-effective truncations of this decomposition. Some initial experimental results are discussed in Section~\ref{sec:experiments}, and we make some concluding remarks in Section~\ref{sec:conclusion}.

\section{Decomposition-Based Techniques in Computational Chemistry}\label{sec:decomp_techniques}

\subsection{A model hierarchy and composite methods}\label{sec:fundamentals}

We begin by constructing for later use a formal hierarchy of familiar models in computational quantum chemistry. For general background, we refer to~\cite{szabo1996, cances2003, echenique2007, helgaker2000, jensen2017}. For a molecular system composed of $M$ clamped nuclei and $N$ electrons, the latter with spatial/spin coordinates $\pdbrace{x_i}_{i=1}^N$, we consider the usual electronic Schrödinger equation
\begin{equation}\label{eq:schroedinger}
  H\Psi\pdparen{x_1, \ldots, x_N} = E \Psi\pdparen{x_1, \ldots, x_N}.
\end{equation}
We will refer to as the \emph{Born-Oppenheimer potential} the function
\begin{equation}\label{eq:bo_potential}
  V^{\text{BO}}\pdparen{X_1, \ldots, X_M}
  \coloneq
  \sum_{1 \leq A < B \leq M} \frac{Z_AZ_B}{\norm{R_B - R_A}_2}
  +
  \inf_{\substack{\Psi \in \mathcal{V}\\\pdprod{\Psi, \Psi} = 1}} \ \pdprod{\Psi, H\Psi}
\end{equation}
delivering the ground-state total energy under the Born-Oppenheimer approximation, cf.~\cite{chinnamsetty2018}. This includes the repulsion energy contributed by the nuclei with coordinates $\pdbrace{X_A = (R_A \in \bbR^3, Z_A \in \bbN)}_{A=1}^M$, where each $R_A$ is a spatial coordinate and each $Z_A$ is an atomic number. In~\eqref{eq:bo_potential}, $\mathcal{V}$ is an appropriate solution space for the weak form of~\eqref{eq:schroedinger}, see~\cite{schneider2009, cances2003}.

We restrict our attention to a collection of standard \emph{ab initio} methods which build on the Hartree-Fock (HF) ansatz~\cite{szabo1996} in order to approximate the minimizing $\Psi$ in~\eqref{eq:bo_potential}. These involve a discretization of $H^1\pdparen{\bbR^3}$ via a practically finite \emph{basis set} of $N_{\text{AO}}$ atomic orbital functions, usually Gaussian-type~\cite{gill1994, echenique2007, hill2013, helgaker2000, jensen2017}. The cost scaling behaviour of these methods depends on $N_{\text{AO}}$. For HF, the cost scales formally as $\mathcal{O}\pdparen{N_{\text{AO}}^4}$, and for an additional estimate of the correlation energy from second-order perturbative Møller-Plesset theory (MP2)~\cite{moeller1934, szabo1996}, basically as $\mathcal{O}\pdparen{N_{\text{AO}}^5}$. For better-quality estimates, we turn to coupled-cluster methods: CCSD, CCSDT, etc.~\cite{bartlett2007, schneider2009}. The cost of a general CC$\pdparen{n}$ coupled cluster calculation considering $n$th-order excitations~\cite{kallay2001} can be rather loosely given as $\mathcal{O}\pdparen{N_{\text{AO}}^{2n+2}}$, but see~\cite{kallay2001} for more precision. The perturbative approach underpinning the CCSD(T)~\cite{raghavachari1989} correction has been generalized to CC$\pdparen{n}\pdparen{n+1}$ correction terms applicable to CC$\pdparen{n}$ calculations~\cite{bomble2005, kallay2005, kallay2008}. Again loosely, these cost $\mathcal{O}\pdparen{N_{\text{AO}}^{2n+3}}$ to evaluate, but see and cf\@.~\cite{bartlett2007, kallay2005, karton2022}. Finally, the best possible estimate of the correlation energy for a particular discretization of $H^1\pdparen{\bbR^3}$ is given by full configuration-interaction (FCI), at cost $\sim \mathcal{O}\pdparen{N_{\text{AO}}^N}$, see, e.g.,~\cite{schneider2009}. Taken all together, these methods present a hierarchy containing $2N-1$ elements,\footnote{Where we assume for the sake of simplicity that up to $N$ spin orbitals are available for excitation. In practical settings, we should more properly consider the implications of, e.g., spin-restricted formalisms like RHF, see, e.g.,~\cite{szabo1996, echenique2007}.} with regularly increasing cost and presumably increasing accuracy, cf\@., e.g.,~\cite{bartlett2007, karton2022}:
\begin{equation}\label{eq:qc_method_hierarchy}
  \begin{aligned}
  \text{HF} \pdbrack{\mathcal{O}\pdparen{N_{\text{AO}}^4}}
  &\to
  \text{MP2} \pdbrack{\mathcal{O}\pdparen{N_{\text{AO}}^5}}
  \to
  \text{CCSD} \pdbrack{\mathcal{O}\pdparen{N_{\text{AO}}^6}}
  \to
  \text{CCSD(T)} \pdbrack{\mathcal{O}\pdparen{N_{\text{AO}}^7}}\\
  &\to
  \cdots
  \to
  \text{CC$\pdparen{n}$} \pdbrack{\mathcal{O}\pdparen{N_{\text{AO}}^{2n+2}}}
  \to
  \text{CC$\pdparen{n}\pdparen{n+1}$}\pdbrack{\mathcal{O}\pdparen{N_{\text{AO}}^{2n+3}}}\\
  &\to
  \cdots
  \to
  \text{FCI} \pdbrack{\sim \mathcal{O}\pdparen{N_{\text{AO}}^N}}.
  \end{aligned}
\end{equation}

In practice, the size $N_{\text{AO}}$ of the discretizing basis set is adjustable only indirectly, as a consequence of the generative use of, e.g., one of the cc-pV$n$Z basis sets~\cite{dunning1989}. A different, but also regular quality/cost hierarchy is here exposed as $n$ is varied. The error due to the incompleteness of the basis set is expected to decrease regularly as $n \to \infty$ and the so-called \emph{complete basis set} (CBS) limit is approached~\cite{hill2013}. This regularity is supported at least in practice by the success of extrapolation procedures; see, e.g.,~\cite{feller2011}, and discussion in~\cite{chinnamsetty2018}. In any case, the total number of basis functions required to describe an $M$-atom system with an cc-pV$n$Z basis set goes as $N_{\text{AO}} \sim Mn^3$; see~\cite[Sec.~8.3]{helgaker2000} for precise counting formulae per atom.

These two one-dimensional hierarchies, of method and of basis set, can of course be combined. We will refer to any particular combination of an \emph{ab initio} method, e.g., HF or MP2, and a generating basis set, e.g., cc-pVDZ or cc-pVTZ, as a \emph{level of theory}. Each such pairing is viewed as providing an approximation $V_{m, p} : \pdparen{\bbR^3 \times \bbN}^M \to \bbR$ to the true Born-Oppenheimer potential~\eqref{eq:bo_potential}, where $1 \leq m \leq 2N-1$ indexes the method as per the ordering of~\eqref{eq:qc_method_hierarchy}, and $1 \leq p < \infty$ indexes the basis set within its particular family. In the last, it suffices intuitively to think of the cc-pV$n$Z basis sets, such that $p=1$ indexes cc-pVDZ, $p=2$ similarly cc-pVTZ, and so on, but any naturally-orderable family that produces a sufficiently regular error decay will do.

Similar two-dimensional hierarchies are widely used in \emph{composite methods} like G4(MP2)~\cite{curtiss2007, curtiss2007a}, the ccCA~\cite{deyonker2009}, W4~\cite{karton2006}, and the HEAT schemes~\cite{bomble2006}; see also the reviews~\cite{raghavachari2015, karton2022}. As a general rule, these require some set of full-system single-point calculations to be carried out, each at a different level of theory. The energetic results of some of these are carefully subtracted from those of others, providing a collection of correction terms that represent, e.g., the added accuracy gained by moving from MP2/cc-pVDZ to CCSD/cc-pVDZ, or from MP2/cc-pVDZ to MP2/cc-pVTZ. The sum of these correction terms then gives an overall extrapolation towards a true FCI/CBS solution.

It was observed in passing in~\cite{chinnamsetty2018} that the derivation and resummation of the correction terms in composite methods is formally reminiscent of the \emph{combination technique}~\cite{griebel1992, bungartz2004} for efficient high-dimensional approximation. In~\cite{zaspel2019}, this similarity was exploited to construct a \emph{combination quantum machine-learning} scheme. For intuition, it helps to interpret the indices $m$ and $p$ as locating the potentials $V_{m,p}$ on a so-called Pople diagram; see and cf\@. the reviews already cited,~\cite[Fig.~5.4]{jensen2017}, and~\cite[Fig.~1]{zaspel2019}. Without involving machine learning, a generalized picture of the connection can be given as follows, cf.~\cite{zaspel2019, karton2022}, although at some risk of oversimplification.\footnote{For example, this does not allow for some common characteristics of composite models, such as the prevalent use of CBS extrapolation schemes, and the introduction of corrections targeting, e.g., core-valence interactions and relativistic effects. For a much more detailed discussion of this setup, see~\cite[Chap.~4]{barker2024}.} Define a family of contribution/correction terms by
\begin{equation*}
  \tilde{V}_{m, p}
  \coloneq V_{m,p} - \sum_{n < m} \tilde{V}_{n, p} - \sum_{q < p} \tilde{V}_{m, q} - \sum_{\substack{n < m\\q < p}} \tilde{V}_{n, q}.
\end{equation*}
Let $\mathcal{L}$ be a linear functional mapping potentials $V_{m,p}$ into some suitable Banach space $Y$, and assume much as in~\cite{chinnamsetty2018} that $\norm{\mathcal{L}\pdbrack{V_{m, p} - V_{m, p-1}}}_Y \lesssim g_p$, for some sequence $\pdparen{g_p \in \bbR}_{p \in \bbN}$ such that $\sum_{p \in \bbN} g_p$ converges absolutely. We restrict ourselves here to a simple point-evaluation functional $\mathcal{L}\pdbrack{V_{m,p}} = V_{m,p}\pdparen{X_1, \ldots, X_M}$ parametrized by a fixed molecular conformation $\pdbrace{X_A}_{A=1}^M$, but, e.g., an evaluation of the nuclear gradient requires no significant theoretical adjustment in what follows. Then we obtain pointwise an exact expansion of the Born-Oppenheimer potential as
\begin{equation*}\label{eq:bo_composite_expansion}
  V^{\text{BO}} = \sum_{m=1}^{2N-1} \sum_{p=1}^{\infty} \tilde{V}_{m,p}.
\end{equation*}
Experience with the combination technique suggests that accurate yet affordable results can be obtained by truncating this sum in a lower-triangular fashion, for example, after all terms with $m + p \leq N'$ for some reasonably low $N'$. And indeed, up to very many intricacies, and precise choice of the set of potentials $V_{m,p}$ and truncation of~\eqref{eq:bo_composite_expansion}, this is essentially just how composite methods operate.

\subsection{Energy-based fragmentation methods}\label{sec:ebfms}

Alternatively, \emph{energy-based fragmentation methods}, hereafter just ``fragmentation methods'', aim to obtain approximate solutions to the Schrödinger equation at reduced computational costs compared to the standard full-system methods such as those outlined above. For detailed reviews, see, e.g.,~\cite{gordon2011, collins2015, raghavachari2015, herbert2019}. In brief, a fragmentation method involves a decomposition of the full set of nuclear indices $\pdbrack{M} = \Set{1, 2, \ldots, M}$ into a family of $K$ subsets $F = \pdbrace{F_i \subseteq \pdbrack{M}}_{i=1}^K$ such that $\bigcup_{i=1}^K F_i = \pdbrack{M}$. As per, e.g.,~\cite{suarez2009, richard2012, raghavachari2015}, this decomposition can be either \emph{disjoint}, so that $F_i \cap F_j = \emptyset$ for $i \neq j$ and thus $\pdbrace{F_i}_{i=1}^K$ is a strict partition of $\pdbrack{M}$, or \emph{overlapping} if not; cf\@., however, comments in~\cite{collins2015} questioning the meaningfulness of this differentiation in practice. We will refer to each $F_i$ as a \emph{fragment}, and the family $F = \pdbrace{F_i \subseteq \pdbrack{M}}_{i=1}^K$ as a \emph{fragmentation}, with the disjointness condition $F_i \cap F_j = \emptyset$ holding unless otherwise stated.

The prototypical fragmentation method involves a truncation of the well-known \emph{many-body expansion} (MBE), see, e.g.,~\cite{drautz2004, richard2012, chinnamsetty2018}. We construct the MBE as follows. Write simply as $V : \pdparen{\bbR^3 \times \bbN}^M \to \bbR$ some particular symmetric potential, either an approximation $V_{m, p}$ as defined above, or the true $V^{\text{BO}}$. Further, let $\pdbrace{V_{\bm{u}} : \pdparen{\bbR^3 \times \bbZ}^M \to \bbR}_{\bm{u} \subseteq \pdbrack{M}}$ be a family of \emph{subproblem potentials}, one for each subset of the nuclear indices $\pdbrack{M}$. Each $V_{\bm{u}}$ is a potential which focuses basically the same level of theory as $V$ on the nuclei indexed by $\bm{u}$; however, each such potential still retains at least the possibility of a dependence on the full set of nuclei. This allows, for instance, the definition of each $V_{\bm{u}}$ to cater for the introduction of such link atoms as are necessary after severing the subsystem indexed by $\bm{u}$ out from its surrounding electrostatic environment~\cite{field1990}, and/or to embed a single-point calculation in a field of Coulomb point charges, as in, e.g.,~\cite{dahlke2007, liu2016}. The MBE of $V$ is then
\begin{equation}\label{eq:nuclear_mbe}
  V
  =
  \sum_{\bm{u}\subseteq\pdbrack{M}} \tilde{V}_{\bm{u}}
  = \tilde{V}_{\emptyset}
  + \sum_{i=1}^M \tilde{V}_{\Set{i}}
  + \sum_{i=1}^M\sum_{j=i+1}^M \tilde{V}_{\Set{i,j}} + \cdots + \tilde{V}_{\pdbrack{M}},
\end{equation}
where each member of the family $\pdbrace{\tilde{V}_{\bm{u}}}_{\bm{u} \subseteq \pdbrack{M}}$ is a \emph{contribution potential}, defined for the moment recursively as
\begin{equation}\label{eq:mbe_contribution_recursive}
  \tilde{V}_{\bm{u}} \coloneq V_{\bm{u}} - \sum_{\bm{v} \subset \bm{u}} \tilde{V}_{\bm{v}}.
\end{equation}
As noted in~\cite{chinnamsetty2018}, it is easy to see that as long as $V_{\pdbrack{M}} = V$, the expansion~\eqref{eq:nuclear_mbe} is exact, regardless of the form of the remaining potentials $V_{\bm{u}}$. Although $V_{\emptyset}$ can be zero, and usually implicitly is, this is not a requirement, as we shall briefly discuss below.

It is more common in practice to construct an MBE in terms of some fragmentation $F$ of $\pdbrack{M}$, as in, e.g.,~\cite{koenig2016}. To formulate this, writing $F_{\bm{u}} \coloneq \bigcup_{i \in \bm{u}} F_i$, we decompose $V$ instead as
\begin{equation}\label{eq:fragment_mbe}
  V = \sum_{\bm{u}\subseteq\pdbrack{K}} \tilde{V}_{F_{\bm{u}}},
  \qquad
  \tilde{V}_{F_{\bm{u}}} \coloneq V_{F_{\bm{u}}} - \sum_{\bm{v} \subset \bm{u}} \tilde{V}_{F_{\bm{v}}}.
\end{equation}
We will refer to this form as a \emph{fragment} MBE, in contrast to the \emph{nuclear} MBE~\eqref{eq:nuclear_mbe}. For notational ease, we will work mostly with the nuclear form. But as we shall see, a fragment MBE can be viewed quite precisely as a restriction of a nuclear MBE.

The traditional way to derive a working fragmentation method from an MBE like~\eqref{eq:nuclear_mbe} (or~\eqref{eq:fragment_mbe}, proceeding equivalently) is to truncate the expansion after all terms $\tilde{V}_{\bm{u}}$ with $\abs{\bm{u}} \leq n \leq M$ for some particular choice of $n$. The critical assumption driving such an \emph{$n$-body expansion} is that of a decay in $\abs{\tilde{V}_{\bm{u}}}$ as $\abs{\bm{u}}$ increases, one swift enough that $\sum_{\abs{\bm{u}} > n} \tilde{V}_{\bm{u}}$ is negligibly small even for $n \ll M$~\cite{chinnamsetty2018, herbert2019}. Then, since the evaluation costs of the potentials scale polynomially in $\abs{\bm{u}}$, the terms in the $n$-body expansion are individually cheap to calculate, and this can collectively be done in an embarrassingly-parallel fashion~\cite{herbert2019}. It has been demonstrated in~\cite{richard2014, lao2016} that these assumptions are flawed. The actual decay in the potentials is not as reliable or as fast as hoped, see also, e.g.,~\cite{ouyang2014}, but more concerningly, the arithmetic required to evaluate an $n$-body expansion rapidly amplifies the uncertainties produced by the iterative solvers which always lie behind the potentials $V_{\bm{u}}$ in practical implementation. We will briefly return to this latter point in Sections~\ref{sec:adaptive} and~\ref{sec:experiments} below.

Other fragmentation methods start with an explicitly overlapping set of fragments, and generate weighted sums of energy terms motivated by inclusion/exclusion arguments in an effort to somehow avoid double-counting interactions or particles; see~\cite{richard2012, richard2013}, and cf., e.g.,~\cite{weiss2010}. The \emph{generalized many-body expansion} (GMBE)~\cite{richard2012} seeks to build by extension a framework capable of handling $n$-body combinations of overlapping fragments, rather than disjoint fragments as in~\eqref{eq:fragment_mbe}. To obtain an $n$-body truncation of the GMBE, one starts by forming the family $\pdbrace{F'_i}_{i=1}^{K'}$ of all $K' = \binom{K}{n}$ exactly $n$-fold unions of a set of $K$ distinct and potentially overlapping fragments $F_i$, called in context \emph{$n$-mers}~\cite{richard2012, richard2013}. Adapting from~\cite[(1.9)]{richard2013}, an $n$-body truncation of the GMBE can then be written as
\begin{equation}\label{eq:gmbe}
  E^{\text{GMBE}}_{\pdparen{n}}
  =
  \sum_{i=1}^{K'} E_{F'_i}
  - \sum_{i < j}^{K'} E_{F'_i \cap F'_j}\\
  + \sum_{i < j < k}^{K'} E_{F'_i \cap F'_j \cap F'_k}
  + \cdots
  + \pdparen{-1}^{K'+1}E_{F'_1 \cap \cdots \cap F'_{K'}},
\end{equation}
where, e.g., $E_{F'_i \cap F'_j} \coloneq \mathcal{L}\pdbrack{V_{F'_i \cap F'_j}}$ indicates the total energy of the subsystem indexed by the intersection of $n$-mer fragments $F'_i$ and $F'_j$. The formula is explicitly motivated by the well-known inclusion/exclusion expression for the cardinality of the union of a collection of potentially-overlapping sets. In the case when the initial fragmentation is disjoint, the $n$-body GMBE energy is expected to reduce to a standard $n$-body truncation of a fragment MBE~\cite{richard2012, richard2013}.

Mathematically, the MBE is an \emph{ANOVA-like decomposition}~\cite{griebel2006, kuo2009, heber2014, griebel2014, chinnamsetty2018}, and so each term $\tilde{V}_{\bm{u}}$ can be viewed as the contribution of the lower-dimensional subset of dimensions indexed by $\bm{u}$ to the high-dimensional function $V$. Just the same idea underlies a construction of the MBE as a particular \emph{high-dimensional model representation} (HDMR)~\cite{rabitz1999}. A low-order truncation of the MBE therefore exploits a low effective dimension in the underlying electronic problem~\cite{griebel2006, chinnamsetty2018}. It has been widely observed, see, e.g.,~\cite{suarez2009, collins2015, raghavachari2015, koenig2016}, that the energy expressions of many fragmentation methods can be understood as truncated MBEs. We believe, probably uncontroversially, that the ANOVA-like viewpoint suggests that \emph{any} fragmentation method should ultimately revert to a truncation $\sum_{\bm{u} \in I} \tilde{V}_{\bm{u}}$ for a downward-closed subset $I \subseteq 2^{\pdbrack{M}}$, in the sense that if $\bm{v} \subseteq \bm{u} \in I$ then also $\bm{v} \in I$ --- for if not, the ``missing'' terms in $I$ introduce an inherent error to the approximation that cannot be systematically removed. This requirement has already been explicitly stated in some methods, e.g.,~\cite{koenig2016, chinnamsetty2018}.

The following straightforward observation underpins much of what we do in the following: any truncation of~\eqref{eq:nuclear_mbe} can be rewritten as a linear combination of potentials $V_{\bm{u}}$ rather than explicitly contributions $\tilde{V}_{\bm{u}}$, simply by application of~\eqref{eq:mbe_contribution_recursive} and collection of terms. Doing so leads to non-recursive expressions for, e.g., standard $n$-body expansions~\cite{kongsted2006, richard2014}. We highlight one particular existing setup which already acknowledges and exploits this for more arbitrary truncations. This \emph{fragment combination range} (FCR) method~\cite{koenig2016, hellmers2021} is derived from a fragment MBE like~\eqref{eq:fragment_mbe}. Adapting notation from~\cite{koenig2016} with reference also to~\cite[(9), (10), (11)]{hellmers2021}, the total energy of a molecular system composed of $K$ disjoint subsystems represented by composite coordinates $\pdbrace{z_i}_{i=1}^K$ is approximated  in the FCR approach as
\begin{equation}\label{eq:fcr}
  E\pdparen{z_1, \ldots, z_K}
  \approx
  \sum_{\bm{f}_l \in \pdbrace{\text{FCR}}} p^{\text{FCR}}_{\bm{f}_l} E_{\bm{f}_l}\pdparen{\pdbrace{z}_{\bm{f}_l}}.
\end{equation}
Here, $\pdbrace{\text{FCR}}$ indicates an arbitrary downward-closed subset of $2^{\pdbrack{K}}$, and $E_{\bm{f}_l}\pdparen{\pdbrace{z}_{\bm{f}_l}}$ the total energy of the combined subsystem specified by those composite coordinates $\pdbrace{z_i}_{i \in \bm{f}_l}$. The coefficients $p^{\text{FCR}}_{\bm{f}_l}$ are given non-recursively by
\begin{equation}\label{eq:fcr_coefficient}
  p^{\text{FCR}}_{\bm{f}_l}
  = \sum_{\substack{\bm{f}_{l'} \supseteq \bm{f}_l\\\bm{f}_{l'} \in \pdbrace{\text{FCR}}}}
  \pdparen{-1}^{l' - l},
\end{equation}
derived in~\cite{koenig2016} by an inductive argument. It is noted that in some cases, these coefficients vanish.

\subsection{Fragmentation methods from the perspective of Möbius inversion}\label{sec:inversion}

The principle of inclusion/exclusion, widely used to derive fragmentation method energy formulae like~\eqref{eq:gmbe}, can be substantially generalized by the technique of \emph{Möbius inversion} from combinatorics and the theory of partially ordered sets~\cite{rota1964, aigner1997, stanley2012}. For the convenience of the reader, we will reintroduce some basic ideas, following particularly~\cite{stanley2012} with minor notational deviation and general reference also to~\cite{rota1964, aigner1997}. Core concepts are illustrated in Figures~\ref{fig:boolean_algebra},~\ref{fig:convex_subgraphs_subposet}, and~\ref{fig:ml_supanova_example_grid}, and discussed in sections to come.

Recall, informally, that a partially ordered set, or \emph{poset}, is a set $P$ with an abstract ordering so that $s \leq t$ for some particular pairs $s, t \in P$. The idea of ``$\leq$'' is sufficient to define also ideas like, e.g., $s < t$ and $s \geq t$. A poset where always either $s \leq t$ or $t \leq s$ for any $s, t \in P$ is called a \emph{chain}. A poset $P$ is \emph{locally finite} if $\Set{ u \in P \given s \leq u \leq t}$ is finite for every $s \leq t$.

\begin{definition}[Möbius function]
  The \emph{Möbius function} of a locally finite poset $P$ is
  \begin{equation}\label{eq:moebius_function}
    \mu_P\pdparen{s, t} = \begin{cases}
      1&\text{if $s = t$,}\\
      -\sum_{s \leq u < t} \mu\pdparen{s, u}&\text{if $s < t$,}\\
      0&\text{otherwise.}
    \end{cases}
  \end{equation}
\end{definition}

This is essentially the definition in~\cite{stanley2012}, but extended to explicitly include the case when $s \not\leq t$. For particular posets $P$, simpler expressions exist for $\mu_P$. For example, suppose that $P$ is a chain poset, and that it has a $\hat{0}$, i.e., a unique minimal element. In this case, it is immediate from~\eqref{eq:moebius_function} that
\begin{equation}\label{eq:chain_poset_moebius}
  \mu_P(s, t) = \begin{cases}
    1&\text{if $s = t$,}\\
    -1&\text{if $s \prec t$},\\
    0&\text{otherwise,}
  \end{cases}
\end{equation}
where $s \prec t$ indicates that $s$ is \emph{covered} by $t$, i.e., $s < t$ and there exists no intervening $u \in P$ with $s < u < t$. Also, $P$ is isomorphic to $\bbN$, in the sense that there exists a bijection $\phi : P \to \bbN$ which preserves $\leq$, as does its inverse $\phi^{-1}$.\footnote{Informally, just label $\hat{0}$ as 0, then its cover $s \succ \hat{0}$ as 1, and so on.} Since the Möbius function is invariant under isomorphism, see also~\cite[Cors.~2 and 3]{bender1975}, the form of $\mu_\bbN$ follows immediately. Further, write $B_n$ to be the powerset $2^{\pdbrack{n}}$ ordered such that $\bm{u} \leq \bm{v}$ exactly when $\bm{u} \subseteq \bm{v}$. This poset is called the \emph{Boolean algebra} of rank $n$, and~\eqref{eq:chain_poset_moebius} can be used to show that
\begin{equation}\label{eq:boolean_algebra_moebius_fn}
  \mu_{B_n}(\bm{u}, \bm{v}) = \begin{cases}
    \pdparen{-1}^{\abs{\bm{v} - \bm{u}}}&\text{if $\bm{u} \leq \bm{v}$,}\\
    0&\text{otherwise.}
  \end{cases}
\end{equation}

The next result is a slightly specialized form of~\cite[Prop.~3.7.1]{stanley2012}, given without proof; historically, see~\cite{rota1964}. We recall that the \emph{principal order ideal} of some  $t \in P$ is the set $\Lambda_t = \Set{s \in P \given s \leq t}$.

\begin{theorem}[Möbius inversion~\cite{rota1964, stanley2012}]\label{thm:moebius_inversion}
  If $P$ is a poset such that $\Lambda_t$ is finite for every $t \in P$, and $f$, $g : P \to \mathbb{R}$, then
  \begin{equation*}
    g(t) = \sum_{s \leq t} f(s)
  \end{equation*}
  for all $t \in P$ if and only if
  \begin{equation*}
    f(t) = \sum_{s \leq t} \mu_P(s, t) g(s)
  \end{equation*}
  for all $t \in P$.
\end{theorem}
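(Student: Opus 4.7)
The plan is to prove both implications by substituting one formula into the other, exchanging the order of the resulting double summation (permissible because each $\Lambda_t$, and hence each interval $[r,t]$, is finite), and then collapsing the inner sum via the defining recursion~\eqref{eq:moebius_function}. The two identities that do the work are
\begin{equation*}
  \sum_{r \leq s \leq t} \mu_P(r, s) = \delta_{r,t}
  \qquad\text{and}\qquad
  \sum_{r \leq s \leq t} \mu_P(s, t) = \delta_{r,t},
\end{equation*}
which together express that $\mu_P$ is a two-sided inverse, in the incidence algebra of $P$, of the zeta function $\zeta_P(s, t) \coloneq \pdbrack{s \leq t}$.

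The first identity is immediate from~\eqref{eq:moebius_function}: for $r = t$ the sum is $\mu_P(t,t) = 1$, and for $r < t$ the recursion precisely asserts that it vanishes. With this in hand, the ``$\Leftarrow$'' direction is a one-line exchange of summation: assuming $f(t) = \sum_{s \leq t} \mu_P(s, t) g(s)$, we compute
\begin{equation*}
  \sum_{s \leq t} f(s)
  = \sum_{s \leq t} \sum_{r \leq s} \mu_P(r, s)\,g(r)
  = \sum_{r \leq t} g(r) \sum_{r \leq s \leq t} \mu_P(r, s)
  = g(t).
\end{equation*}

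The ``$\Rightarrow$'' direction is entirely analogous: starting from $g(t) = \sum_{s \leq t} f(s)$,
\begin{equation*}
  \sum_{s \leq t} \mu_P(s, t)\,g(s)
  = \sum_{s \leq t} \mu_P(s, t) \sum_{r \leq s} f(r)
  = \sum_{r \leq t} f(r) \sum_{r \leq s \leq t} \mu_P(s, t)
  = f(t),
\end{equation*}
\emph{provided} the second, dual identity holds.

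The main obstacle is this dual identity, since~\eqref{eq:moebius_function} recurses only on the first argument of $\mu_P$. To establish it, I would restrict attention to the finite interval $[r, t]$ and view the incidence algebra on it as a finite matrix algebra: $\zeta_P$ corresponds there to an upper-triangular matrix with $1$s on the diagonal, hence invertible, and the first identity says $\mu_P$ is its right inverse. For finite matrices a right inverse is automatically a two-sided inverse, which is precisely the second identity. Alternatively, one can proceed directly by strong induction on $\abs{[r,t]}$, with the base case $r = t$ trivial and the inductive step using the first identity on proper sub-intervals to force cancellation; either way the argument is elementary once finiteness of $[r,t]$ is in play.
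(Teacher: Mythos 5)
The paper states this theorem explicitly \emph{without} proof, importing it from Rota and Stanley, so there is no internal argument to compare against; what you give is the standard proof from those sources (convolution against the zeta function in the incidence algebra, carried out as a finite interchange of summation), and it is correct. In particular, you rightly note that the identity $\sum_{r \leq s \leq t} \mu_P(r,s) = \delta_{r,t}$ is nothing but the recursion~\eqref{eq:moebius_function}, that the interchange of sums is licensed by the finiteness of $\Lambda_t$, and that the only genuinely nontrivial point is the dual identity $\sum_{r \leq s \leq t} \mu_P(s,t) = \delta_{r,t}$; your resolution --- restrict to the finite interval $\pdbrack{r,t}$, observe that the zeta matrix there is unitriangular and hence invertible, so its one-sided inverse $\mu_P$ is automatically two-sided --- is exactly the standard way to close that gap.
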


A non-recursive expression for the terms $\tilde{V}_{\bm{u}}$ in an MBE like~\eqref{eq:nuclear_mbe} can be obtained pointwise via Möbius inversion~\cite{drautz2004, suarez2009}. To make the bidirectional nature of the process clear, note that we can equally well just define outright each
\begin{equation}\label{eq:mbe_contribution_nonrecursive}
  \tilde{V}_{\bm{u}}
  \coloneq
  \sum_{\bm{v} \subseteq \bm{u}} \pdparen{-1}^{\abs{\bm{v}-\bm{u}}} V_{\bm{v}}
  =
  \sum_{\bm{v} \subseteq  \bm{u}} \mu_{B_M}\pdparen{\bm{v}, \bm{u}} V_{\bm{v}},
\end{equation}
and then obtain~\eqref{eq:mbe_contribution_recursive} as an immediate consequence of Theorem~\ref{thm:moebius_inversion}. The equivalence between~\eqref{eq:mbe_contribution_recursive} and~\eqref{eq:mbe_contribution_nonrecursive} is known even without explicit recourse to Möbius inversion~\cite{koenig2016}, and has also been noted in the ANOVA setting; cf\@.~\cite[Thm.~2.1]{kuo2009}. Interestingly, a simpler variant of Theorem~\ref{thm:moebius_inversion} is invoked for this purpose in~\cite{weiss2010}, where it is named as an extension of the PIE; cf\@. again~\cite{stanley2012}.

\begin{figure}[t]
  {\centering
    \includegraphics{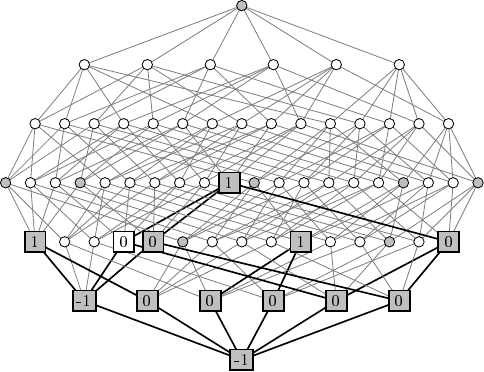}

  }
  \caption{A Hasse-like diagram of the Boolean algebra $B_6$. Each vertex represents a subset $\bm{u} \subseteq \pdbrack{6}$, and an upwards line between vertices $\bm{u}$ and $\bm{v}$ indicates that $\bm{u} \prec \bm{v}$. The bottom-most point is the empty set, $\emptyset$, those on the first row, left to right, $\pdbrace{1}, \pdbrace{2} \ldots$, those on the second, $\pdbrace{1, 2}, \pdbrace{1, 3}, \ldots$, and so on. Each point thus corresponds to a term $\tilde{V}_{\bm{u}}$ of the nuclear MBE~\eqref{eq:nuclear_mbe} of a six-atom system, or alternatively a term $\tilde{V}_{F_{\bm{u}}}$ of the fragment MBE~\eqref{eq:fragment_mbe} for a system of six disjoint fragments. Rectangular vertices indicate elements of a particular order ideal $I$ of this poset, or equivalently, the terms of a downwards-closed $I$-truncation of the MBE. The numbers within are their combination coefficients, as per~\eqref{eq:combination_coefficient}; so if the truncation were rewritten as a sum of contribution potentials $V_{\bm{u}}$ (or their equivalents in the fragment case), the term $V_{\emptyset}$ would be weighted by $-1$, the term $V_{\pdbrace{3,4}}$ by $1$, and the term $V_{\pdbrace{5}}$ would not appear. Vertices shaded gray have meaning in the context of Figure~\ref{fig:convex_subgraphs_subposet}.}
  \label{fig:boolean_algebra}
\end{figure}

Posets are usually represented visually as \emph{Hasse diagrams}. An example Hasse diagram for the Boolean algebra $B_6$ of subsets $\bm{u} \subseteq \pdbrack{6}$ is shown in Figure~\ref{fig:boolean_algebra}, annotated to provide a concrete example of some of the notation and ideas that we will now introduce.\footnote{A connection between a specific kind of Hasse diagram and the terms of the GMBE is made in~\cite{broderick2023}; we will return briefly to this point in Section~\ref{sec:adaptive} below.} These latter are given in a very general form. They can be immediately specialized to the case of $B_M$, and therefore the MBE, but they will also apply to the general expansion form~\eqref{eq:poset_grid_multilevel} to which we will progressively build.

In combinatorial terminology, an \emph{order ideal} is a subset $I$ of a poset $P$ such that if $t \in I$, then also $s \in I$ for all $s \leq t$. Obviously, a downward-closed subset of $B_M$ is just such an order ideal.

\begin{definition}[Combination coefficients]
  Let $P$ be a locally finite poset with a $\hat{0}$, and let $I$ be a finite order ideal of $P$.  The \emph{combination coefficient} of each $s \in P$ for $I$ is defined to be
  \begin{equation}\label{eq:combination_coefficient}
    D^{\pdparen{I}}_s = \sum_{\substack{t\in I\\t \geq s}} \mu_P\pdparen{s, t}.
  \end{equation}
\end{definition}

The name ``combination coefficient'' stems from the combination technique, and there are direct connections in what follows to poset-based constructions in that area; see, e.g.,~\cite{hegland2003, hegland2007, harding2016, wong2016} and the more general development in~\cite[Chap.~3]{barker2024}. From our perspective, the combination coefficients arise by rewriting what we will call an \emph{$I$-truncation} of the MBE as a linear combination of the subproblem potentials $V_{\bm{u}}$, i.e.,
\begin{equation}\label{eq:mbe_combination_sum}
  \begin{split}
  S_I \coloneq \sum_{\bm{u} \in I} \tilde{V}_{\bm{u}}
  &= \sum_{\bm{u} \in I} \sum_{\bm{v} \subseteq \bm{u}} V_{\bm{v}}\mu_{B_M}\pdparen{\bm{v}, \bm{u}} \\
  &= \sum_{\bm{u} \in I} V_{\bm{u}}
  \sum_{\substack{\bm{v} \supseteq \bm{u}\\\bm{v} \in I}} \mu_{B_M}\pdparen{\bm{v}, \bm{u}}
  = \sum_{\bm{u} \in I} D^{\pdparen{I}}_{\bm{u}} V_{\bm{u}},
  \end{split}
\end{equation}
by~\eqref{eq:mbe_contribution_nonrecursive} and rearrangement; cf\@. the alternative proof of~\cite[Prop.~3.7.1]{stanley2012}. Note that when $\mu_{B_M}\pdparen{\bm{v}, \bm{u}} = \pdparen{-1}^{\abs{\bm{v}-\bm{u}}}$ is inserted in~\eqref{eq:combination_coefficient}, then $D^{\pdparen{I}}_{\bm{u}}$ is exactly the FCR coefficient $p^{\text{FCR}}_{\bm{f}_l}$ in~\eqref{eq:fcr_coefficient}, up to notation. So our setup can be viewed as just rederiving the FCR energy expression~\eqref{eq:fcr} by application of heavy machinery.

Now suppose that $\hat{F} \subseteq B_M$ is an arbitrary subposet of $B_M$, recalling in general that a \emph{subposet} of some poset $P$ is a subset $Q \subseteq P$ equipped with an appropriate restriction of the same order relation $\leq$ as $P$. Then we can decompose
\begin{equation}\label{eq:supanova}
    V \approx \sum_{\bm{u} \in \hat{F}} \tilde{V}'_{\bm{u}},
\end{equation}
in terms of contribution potentials defined only for elements of $\hat{F}$,
\begin{equation*}
\tilde{V}'_{\bm{u}}
  \coloneq
  V_{\bm{u}} - \sum_{\bm{v} <_{\hat{F}} \bm{u}} \tilde{V}'_{\bm{v}}
  =
  \sum_{\bm{v} \leq_{\hat{F}} \bm{u}} \mu_F\pdparen{\bm{v}, \bm{u}} V_{\bm{v}},
\end{equation*}
where the last equality follows again by Möbius inversion. For reasons that will become clear below, we will refer to~\eqref{eq:supanova} as a SUPANOVA expansion, but it can also be viewed as a special case of the CGTCE decomposition form given in~\cite{klein1986}. If $\pdbrack{M} \in \hat{F}$ and $V_{\pdbrack{M}} = V$, then the decomposition~\eqref{eq:supanova} is exact, but even if not, a sum $S'_{I'} = \sum_{\bm{u} \in I'} \tilde{V}'_{\bm{u}}$ can be defined for any order ideal $I'$ of $\hat{F}$, and rewritten as a linear combination of only those original contribution potentials $V_{\bm{u}}$ where $\bm{u} \in \hat{F}$. Such a truncation $S'_{I'}$ can be exactly identified with some $S_I$ for an order ideal of the full $B_M$ at least when the non-zero terms of $S_I$ are exactly those of $S'_{I'}$. We can formalize this in the general setting as follows:

\begin{definition}[Combination-consistency]
  Let $P$ be a locally finite poset, and $Q$ be a subposet of $P$. Further, let $I$ be a finite order ideal of $P$, and $I'$ be a finite order ideal of $Q$. Write as above $D^{\pdparen{I}}_s$ to be the combination coefficient of $s \in P$ for $I$, and for notational clarity, write $\hat{D}^{\pdparen{I'}}_{s'}$ to be the combination coefficient of $s' \in Q$ for $I'$. If for every $s \in P$ it holds that
  \begin{equation}\label{eq:combination_consistency}
    D^{\pdparen{I}}_s = \begin{cases}
      \hat{D}^{\pdparen{I'}}_s & \text{if $s \in Q$,}\\
      0 & \text{otherwise,}
    \end{cases}
  \end{equation}
  then $I$ and $I'$ are \emph{combination-consistent}. If a combination-consistent finite order ideal $I$ of $P$ exists for every finite order ideal $I'$ of $Q$, then $Q$ is a \emph{combination-consistent} subposet of $P$.
\end{definition}

\begin{figure}[t]
  {\centering
    \includegraphics{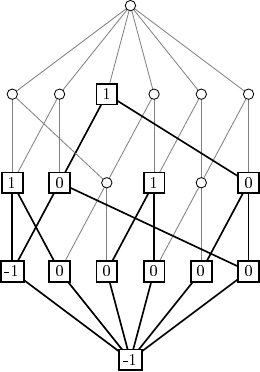}

  }
  \caption{A Hasse-like diagram of the subposet of $B_6$ formed by those subsets shaded gray in Figure~\ref{fig:boolean_algebra}. Again, rectangular vertices show the combination coefficients of an order ideal $I'$ of this subposet, specifically that generated (in this subposet) by the maximal elements of the order ideal $I$ shown in Figure~\ref{fig:boolean_algebra}. These are identical to the coefficients for the respective elements in $I$ shown there, and conversely, the coefficient of the single element of $I$ not in this subposet is zero. So $I'$ is combination-consistent with $I$. This particular subposet has a relevance to, e.g., the benzene molecule, which will become clear in Section~\ref{sec:graphs} below.}
  \label{fig:convex_subgraphs_subposet}
\end{figure}

An example of a particular subposet of $B_6$ is drawn in Figure~\ref{fig:convex_subgraphs_subposet}. The order ideal $I'$ of the subposet shown in this diagram is combination-consistent with the order ideal $I$ of $B_6$ shown in Figure~\ref{fig:boolean_algebra}; this can be verified by comparison of the combination coefficients shown in the two figures. In fact, for reasons that will become clear below, the entire subposet is combination-consistent with $B_6$, although this is not immediately obvious.

If we assume that a general poset $P$ possesses a certain kind of structure, we can characterize the necessary requirements for some subposet $Q$ of $P$ to be combination-consistent with it. This characterization slightly generalizes work in~\cite{lafuente2005}. For a detailed explanation of the connection, see~\cite[Sec.~5.2]{barker2024}. But in short, a cluster expansion is there truncated after a certain order ideal of a direct product of subposets of a Boolean algebra, and Möbius inversion leads to a different expression for what we call combination coefficients. This expression can be recreated in our setup as follows, as an immediate consequence of~\eqref{eq:combination_coefficient} and~\eqref{eq:moebius_function}. A direct connection also exists here to a textbook derivation of the set-cardinality PIE expression via Möbius inversion; see and cf.~\cite[p.~265]{stanley2012}. The PIE in turn connects back to the combination technique; see, e.g.,~\cite{hegland2007, wong2016}.

\begin{lemma}\label{lem:equivalent_combination_coefficient}
Let $I$ be an order ideal of a locally finite poset $P$. Define $J \coloneq I \cup \pdbrace{\hat{1}_J}$ ordered just as $I$ but with additionally $s < \hat{1}_J$ for every $s \in I$. The precise choice of $\hat{1}_J$ is not important here, only that there exists some unique maximal element of $J$. Then, for every $s \in I$,
\begin{equation*}
    D^{\pdparen{I}}_s = -\mu_{J}\pdparen{s, \hat{1}_J}.
\end{equation*}
\end{lemma}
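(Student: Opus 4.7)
The plan is to compute $\mu_J(s, \hat{1}_J)$ directly from the defining recurrence \eqref{eq:moebius_function} and match it against the definition \eqref{eq:combination_coefficient} of $D^{\pdparen{I}}_s$. Since $s \in I$ satisfies $s < \hat{1}_J$ in $J$, the recurrence gives
\begin{equation*}
    \mu_J\pdparen{s, \hat{1}_J} = -\sum_{s \leq u < \hat{1}_J} \mu_J\pdparen{s, u},
\end{equation*}
where the sum ranges over $u \in J$. The first step is to identify this index set: because $\hat{1}_J$ is the unique maximal element of $J$ and $J \setminus \{\hat{1}_J\} = I$, the condition $s \leq u < \hat{1}_J$ picks out exactly those $u \in I$ with $u \geq s$, i.e., the same index set appearing in \eqref{eq:combination_coefficient}.

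The next step, and the main technical point, is to argue that $\mu_J(s, u) = \mu_P(s, u)$ for every pair $s, u \in I$ with $s \leq u$. This is where the hypothesis that $I$ is an order ideal of $P$ enters. The Möbius function of any poset depends only on the interval $[s, u]$; so it suffices to show that the interval taken in $J$ agrees with the interval taken in $P$. For $u \in I$, any $v \in P$ with $s \leq v \leq u$ satisfies $v \leq u$, and downward-closedness of $I$ then forces $v \in I \subseteq J$. Conversely, $[s, u]_J \subseteq [s, u]_P$ trivially since the order on $J$ restricted to $I$ coincides with the order on $P$ restricted to $I$. Hence the intervals agree and the Möbius values are equal. (A quick induction on the length of the interval, using \eqref{eq:moebius_function}, formalizes this, and it is also a standard consequence of the interval-only dependence of $\mu$, cf.\ the references to \cite{stanley2012} already made.)

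Combining these two observations yields
\begin{equation*}
    \mu_J\pdparen{s, \hat{1}_J}
    = -\sum_{\substack{u \in I\\u \geq s}} \mu_J\pdparen{s, u}
    = -\sum_{\substack{u \in I\\u \geq s}} \mu_P\pdparen{s, u}
    = -D^{\pdparen{I}}_s,
\end{equation*}
which is the claim. The only delicate point is the interval-preservation step; everything else is a direct unwrapping of definitions.
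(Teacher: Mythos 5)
Your proof is correct and is exactly the argument the paper has in mind: it states the lemma as an ``immediate consequence of~\eqref{eq:combination_coefficient} and~\eqref{eq:moebius_function}'', i.e., unwrapping the Möbius recurrence at $\pdparen{s, \hat{1}_J}$ and identifying the resulting index set, with the interval-preservation step $\mu_J\pdparen{s,u} = \mu_P\pdparen{s,u}$ (valid because $I$ is downward-closed, so $\pdbrack{s,u}_P = \pdbrack{s,u}_J$) supplying the only nontrivial detail. The one point worth noting is that $I$ must be finite (as in the definition of the combination coefficients) for $J$ to be locally finite and for the sums to make sense, but this is implicit in the statement rather than a gap in your argument.
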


We need some further basic definitions; see again and cf., e.g.,~\cite{aigner1997, stanley2012}. A \emph{meet semilattice} is a poset $P$ where every pair of elements $s$, $t \in P$ have a unique greatest lower bound $r$ in $P$;
this bound is called their \emph{meet} and written as $r = s \wedge t$. If $Q$ is a meet-closed subposet of a meet semilattice $P$, i.e., one where $s \wedge_P t \in Q$ for any $s$, $t \in Q$, then we call $Q$ a \emph{meet subsemilattice} of $P$. For a finite order ideal $I$ of a general poset $P$, the \emph{generating antichain} of $I$ is the set $A = \Set{s \in I \given \nexists\,t \in I,\,t > s}$ of maximal elements of $I$; we will write then $I = \pdprod{A} \coloneq \bigcup_{a \in A} \Lambda_a$. Using this terminology, we can rephrase a key observation made on~\cite[p.~7481]{lafuente2005} in a more general setting. The proof is given in Appendix~\ref{sec:proofs}, and uses just the same ideas as in~\cite{lafuente2005}.

\begin{proposition}[Generalized from~\cite{lafuente2005}]\label{prop:meets_of_generating_antichain}
  Let $P$ be a locally finite meet semilattice, and $I$ be a finite order ideal of $P$ with generating antichain $A$. If $D^{\pdparen{I}}_s \neq 0$ for some $s \in P$, then there exists some $\Set{a_1, \ldots, a_n} \subseteq A$ such that $s = a_1 \wedge \cdots \wedge a_n$.
\end{proposition}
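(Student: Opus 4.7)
The plan is to translate $D^{\pdparen{I}}_s$ into a Möbius-function computation on the augmented poset $J$ from Lemma~\ref{lem:equivalent_combination_coefficient}, and then invoke a classical consequence of Weisner's theorem characterising when such a Möbius value can be nonzero. First, I would observe that if $s \notin I$ then, because $I$ is downward-closed, no $t \in I$ satisfies $t \geq s$, so the defining sum for $D^{\pdparen{I}}_s$ is empty and vanishes. Hence the hypothesis $D^{\pdparen{I}}_s \neq 0$ already forces $s \in I$, and Lemma~\ref{lem:equivalent_combination_coefficient} applies to give $D^{\pdparen{I}}_s = -\mu_J\pdparen{s, \hat{1}_J}$.

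Next, I would analyse the closed interval $\pdbrack{s, \hat{1}_J}$ in $J$, arguing that it is a finite lattice whose meet operation is inherited from $P$. Finiteness is immediate from the finiteness of $I$. For $u, v \in I$ with $u, v \geq s$, the meet $u \wedge_P v$ lies in $I$ (since an order ideal in a meet semilattice is itself a meet subsemilattice) and still satisfies $u \wedge_P v \geq s$; together with the adjoined top $\hat{1}_J$, this yields the lattice structure. I would then identify the coatoms of $\pdbrack{s, \hat{1}_J}$, namely the elements covered by $\hat{1}_J$. Because $A$ generates $I$, every $u \in I$ with $u \geq s$ satisfies $u \leq a$ for some $a \in A$, and such an $a$ necessarily lies in $A_s \coloneq \Set{a \in A \given a \geq s}$. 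So the maximal elements of $\Set{u \in I \given u \geq s}$ are precisely the elements of $A_s$, which in turn form the coatom set of the interval. Note that $A_s$ is nonempty, since $s \in I$ is itself $\leq$ some element of $A$.

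The final step is to invoke the well-known corollary of Weisner's theorem (see, e.g.,~\cite[Cor.~3.9.5]{stanley2012}; historically~\cite{rota1964}): in any finite lattice $L$ with more than one element, if $\mu_L\pdparen{\hat{0}_L, \hat{1}_L} \neq 0$, then $\hat{0}_L$ equals the meet of the coatoms of $L$. Applied to $L = \pdbrack{s, \hat{1}_J}$, whose bottom element is $s$, the assumption $D^{\pdparen{I}}_s \neq 0$ gives $\mu_J\pdparen{s, \hat{1}_J} \neq 0$, and hence $s = \bigwedge A_s$. Since meets in the interval agree with meets in $P$ and $A_s$ is a finite subset of $A$, enumerating its elements as $\pdbrace{a_1, \ldots, a_n}$ yields the desired representation $s = a_1 \wedge \cdots \wedge a_n$.

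The main obstacle is the structural bookkeeping in the second paragraph — verifying that $\pdbrack{s, \hat{1}_J}$ is genuinely a lattice whose meet operation is inherited from $P$, and pinning down its coatoms as exactly $A_s$. Once these facts are in place, the concluding Möbius computation reduces to a direct citation of Rota's consequence of Weisner's theorem.
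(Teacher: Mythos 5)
Your proposal is correct and follows essentially the same route as the paper's proof: reduce to $s \in I$, apply Lemma~\ref{lem:equivalent_combination_coefficient} to write $D^{\pdparen{I}}_s = -\mu_J\pdparen{s, \hat{1}_J}$, observe that $\pdbrack{s, \hat{1}_J}$ is a finite lattice whose coatoms lie in $A$, and conclude via~\cite[Cor.~3.9.5]{stanley2012}. Your version just spells out in more detail why the interval is a lattice and why its coatoms are exactly $\Set{a \in A \given a \geq s}$, which the paper leaves as ``by construction.''
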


Using our notation and terminology, and with a little generalization, it was further observed in~\cite{lafuente2005} that the subposet $Q$ built from the meets of an essentially arbitrary antichain $A$ of a particular meet semilattice $P$ is combination-consistent with $I = \pdprod{A}_P \subseteq P$. It is straightforward to check that, given an order ideal $I'$ of some arbitrary subposet $Q$ of a more general $P$,  the only possible combination-consistent order ideal $I$ of $P$ is that provided by the generating antichain of $I'$, if one exists at all. For the proof, see Appendix~\ref{sec:proofs}.

\begin{lemma}[Adapted 
from Lem.~5.2.7~\cite{barker2024}]\label{lem:unique_consistent_order_ideal}
  Let $P$ be a locally finite poset, let $Q$ be a subposet of $P$, and let $I' \subseteq Q$ be an arbitrary finite order ideal of $Q$. If $I$ is a finite order ideal of $P$ which is combination-consistent with $I'$, then $I = \pdprod{A'}_P$, where $A' \subseteq I'$ is the generating antichain of $I'$.
\end{lemma}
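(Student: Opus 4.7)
My plan is to establish both containments $\pdprod{A'}_P \subseteq I$ and $I \subseteq \pdprod{A'}_P$, using two elementary facts about combination coefficients in any locally finite poset $R$ and finite order ideal $J$ thereof: (i) if $m$ is maximal in $J$, then $\sum_{t \in J,\, t \geq_R m} \mu_R(m,t) = \mu_R(m,m) = 1$, since only $t = m$ contributes; (ii) conversely, a non-vanishing combination coefficient $\sum_{t \in J,\, t \geq_R s} \mu_R(s,t)$ forces this sum to be non-empty, and the order ideal property then gives $s \in J$. These two observations, combined with the two cases of~\eqref{eq:combination_consistency}, will do the work.

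For the first containment, I would take $a \in A'$ and show $a \in I$. Since $a \in Q$ is maximal in $I'$, (i) applied in $Q$ gives $\hat{D}^{\pdparen{I'}}_a = 1$; combination-consistency then yields $D^{\pdparen{I}}_a = 1 \neq 0$, and (ii) places $a$ in $I$. The order-ideal property of $I$ then extends this to $\Lambda_a \subseteq I$, and unioning over $a \in A'$ delivers $\pdprod{A'}_P \subseteq I$.

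For the reverse containment, I plan to argue by contradiction: suppose $s \in I \setminus \pdprod{A'}_P$, and choose $s$ maximal in this difference under $\leq_P$. The main obstacle will be to promote this local maximality into maximality of $s$ in the whole of $I$; my plan here is to observe that any $t \in I$ with $t >_P s$ would, by the choice of $s$, have to lie in $\pdprod{A'}_P$, so $t \leq_P a$ for some $a \in A'$, and then $s <_P t \leq_P a$ would contradict $s \notin \pdprod{A'}_P$. Given this, (i) yields $D^{\pdparen{I}}_s = 1$, and combination-consistency forces both $s \in Q$ (otherwise its combination coefficient would vanish) and $\hat{D}^{\pdparen{I'}}_s = 1$, so (ii) applied in $Q$ places $s$ in $I' = \pdprod{A'}_Q$. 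Picking $a \in A'$ with $s \leq_Q a$ and using that $Q$ inherits its order from $P$ to conclude $s \leq_P a$ will then contradict $s \notin \pdprod{A'}_P$ and finish the argument.
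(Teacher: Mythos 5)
Your proof is correct and rests on the same two observations as the paper's own argument --- that a maximal element of a finite order ideal has combination coefficient $1$, and that a non-vanishing combination coefficient forces membership in the ideal --- combined with the transfer between $P$ and $Q$ supplied by~\eqref{eq:combination_consistency}. The only difference is organizational: you establish the two containments $\pdprod{A'}_P \subseteq I$ and $I \subseteq \pdprod{A'}_P$ directly (the latter via a maximal counterexample promoted to a maximal element of $I$), whereas the paper derives a contradiction from the assumption that the generating antichains of $I$ and $I'$ differ; the mathematical content is the same.
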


Since the next result is an immediate consequence of standard results in order theory, it is hardly interesting in its own right, although we do not believe we have seen it explicitly stated as such. But it is very important for us. The key ideas, particularly for direction $\pdparen{\Leftarrow}$, are again to be found in~\cite{lafuente2005} for a particular lattice, and their generalization to an arbitrary meet semilattice is immediate. The proof is relegated to Appendix~\ref{sec:proofs}. We remark that other lattices, related to $\bbN^d$, also arise very naturally in the standard grid-based setting of the combination technique~\cite{hegland2003, hegland2007, harding2016, wong2016}, and there is also an especially close connection here to an inclusion/exclusion-based construction in~\cite[Chap.~3]{wong2016}; see~\cite[Sec.~5.2.3]{barker2024} for deeper discussion.

\begin{theorem}[Thm.~5.2.8~\cite{barker2024}]\label{thm:combination_consistency}
  Let $P$ be a locally finite meet semilattice with a $\hat{0}$, and let $Q$ be a subposet of $P$. Then $Q$ is combination-consistent with $P$ if and only if $Q$ is a meet subsemilattice of $P$.
\end{theorem}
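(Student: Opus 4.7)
The plan is to reduce both directions to a single inclusion/exclusion–style formula for the combination coefficients. Writing $I = \bigcup_{a \in A'} \Lambda_a^P$ and using the standard identity $\Lambda_a^P \cap \Lambda_b^P = \Lambda_{a \wedge_P b}^P$, available in any meet semilattice, set-theoretic inclusion/exclusion gives
\begin{equation*}
\mathbf{1}_I\pdparen{t} = \sum_{\emptyset \neq S \subseteq A'} \pdparen{-1}^{\abs{S}+1} \mathbf{1}_{\Lambda_{m_S^P}^P}\pdparen{t},
\end{equation*}
where $m_S^P \coloneq \bigwedge_{a \in S} a$ denotes the meet in $P$. Substituting into~\eqref{eq:combination_coefficient}, swapping the (finite) orders of summation, and invoking the defining identity $\sum_{s \leq u \leq t} \mu_P\pdparen{s, u} = \pdbrack{s = t}$ of the Möbius function collapses everything to
\begin{equation*}
D^{\pdparen{I}}_s = \sum_{\substack{\emptyset \neq S \subseteq A'\\m_S^P = s}} \pdparen{-1}^{\abs{S}+1}.
\end{equation*}
Provided $Q$ possesses the needed meets of subsets of $A'$, the same derivation carried out inside $Q$ yields the analogous formula for $\hat{D}^{\pdparen{I'}}_s$ with $m_S^Q$ in place of $m_S^P$.

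For direction $\pdparen{\Leftarrow}$, assume $Q$ is a meet subsemilattice of $P$ and let $I'$ be any finite order ideal of $Q$ with generating antichain $A'$. Set $I \coloneq \pdprod{A'}_P$, which is a finite order ideal of $P$ by local finiteness. If $s \notin Q$, Proposition~\ref{prop:meets_of_generating_antichain} gives $D^{\pdparen{I}}_s = 0$ immediately: a nonzero coefficient would force $s$ to be a meet in $P$ of elements of $A' \subseteq Q$, which by the meet subsemilattice hypothesis would place $s$ in $Q$. If $s \in Q$, then $m_S^P = m_S^Q$ for every $S \subseteq A' \subseteq Q$, so the two displayed formulas agree.

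For direction $\pdparen{\Rightarrow}$, I would argue by contrapositive. Suppose there exist $s, t \in Q$ with $r \coloneq s \wedge_P t \notin Q$; then $r \notin \pdbrace{s, t}$, so $s$ and $t$ are necessarily incomparable. Take $I' = \Lambda_s^Q \cup \Lambda_t^Q$, whose generating antichain is $A' = \pdbrace{s, t}$. By Lemma~\ref{lem:unique_consistent_order_ideal}, the only candidate combination-consistent order ideal of $P$ is $I = \Lambda_s^P \cup \Lambda_t^P$; the displayed formula evaluates to $D^{\pdparen{I}}_r = \pdparen{-1}^{2+1} = -1 \neq 0$, contradicting combination-consistency at the point $r \notin Q$.

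The main obstacle is the inclusion/exclusion collapse of $D^{\pdparen{I}}_s$; once this formula is in hand, the rest is essentially bookkeeping. Some care is needed in verifying that the intersection identity and the Möbius defining relation apply cleanly after the exchange of summations, and in noting that for $s \in Q \setminus I$ both coefficients vanish trivially by the supports of their respective sums.
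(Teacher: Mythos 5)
Your proof is correct, and it reaches the theorem by a genuinely different route from the paper's. Where the paper proves $\pdparen{\Leftarrow}$ by passing to $J = I \cup \pdbrace{\hat{1}_J}$, identifying $D^{\pdparen{I}}_s$ with $-\mu_J\pdparen{s, \hat{1}_J}$ (Lemma~\ref{lem:equivalent_combination_coefficient}) and then invoking the Crosscut Theorem and its corollary on meets of coatoms, you instead derive from scratch the explicit closed form $D^{\pdparen{I}}_s = \sum_{\emptyset \neq S \subseteq A',\, m_S^P = s} \pdparen{-1}^{\abs{S}+1}$ via set-theoretic inclusion/exclusion on $\bigcup_{a \in A'} \Lambda_a^P$ and the defining recursion of the Möbius function. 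This formula is morally what the Crosscut Theorem delivers for these particular intervals (and it is essentially the quantitative upgrade of Proposition~\ref{prop:meets_of_generating_antichain}, in the spirit of~\cite{lafuente2005} and of~\eqref{eq:sum_of_pie_coeffs_equal_unity}), so your argument is more self-contained and elementary, at the cost of re-deriving a special case of a standard result; the paper's version is shorter given the cited machinery. Both proofs use Lemma~\ref{lem:unique_consistent_order_ideal} identically for $\pdparen{\Rightarrow}$. Two small remarks. First, your value $D^{\pdparen{I}}_{r} = -1$ for the meet of two incomparable generators is the correct one (check $B_2$ with $A' = \pdbrace{\pdbrace{1},\pdbrace{2}}$: $D^{\pdparen{I}}_{\emptyset} = 1 - 1 - 1 = -1$); the paper's proof states $1$ at the corresponding step, which is a harmless sign slip since only nonvanishing is needed. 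Second, when you transport the formula into $Q$ you should spell out that for $S \subseteq A' \subseteq Q$ the element $\bigwedge_P S$, which lies in $Q$ by meet-closedness, really is the greatest lower bound of $S$ \emph{within} $Q$ (any lower bound in $Q$ is one in $P$), so that $\bigcap_{a \in S}\Lambda^Q_a = \Lambda^Q_{m_S^P}$ and the derivation in $Q$ goes through with $\mu_Q$; this is immediate but is the one point your ``provided $Q$ possesses the needed meets'' hedge leaves implicit.
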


As a first consequence of this, consider some fragmentation $F =\pdbrace{F_i \subseteq \pdbrack{M}}_{i=1}^K$ of $\pdbrack{M}$, and let in this case $\hat{F} = \Set{F_{\bm{u}} \given \bm{u} \in B_K}$. Since meets in $B_M$ are given by set intersections, it is not hard to see that $\hat{F}$ is a meet subsemilattice of $B_M$. This means that any downward-closed truncation of a fragment MBE~\eqref{eq:fragment_mbe}, standard $n$-body or otherwise, can also be understood as a downward-closed truncation of a nuclear MBE~\eqref{eq:nuclear_mbe}. Note that while the former fragment-MBE truncation can then be exactly expressed as a sum of nuclear-MBE contributions $\sum_{\bm{u} \in I} \tilde{V}_{\bm{u}}$ indexed by the appropriate $I \subseteq B_M$, only the nuclear-MBE subproblem potentials $V_{\bm{u}}$ for $\bm{u} \in \hat{F}$ actually matter in this sum, and the remainder indexed by $B_M - \hat{F}$ can be chosen completely arbitrarily.

Theorem~\ref{thm:combination_consistency} applies to any arbitrary subposet $\hat{F}$ of $B_M$. This can be used to rigorously understand some, and we claim most if not all, of the various fragmentation methods that build from sets of overlapping fragments just as producing particular truncations of a fragment MBE, and therefore transitively an underlying nuclear MBE. This has been informally anticipated, and in some cases either shown directly or argued by example, see again~\cite{collins2015, raghavachari2015, koenig2016}. The following result is hypothesized in~\cite{koenig2016}, but with only a non-rigorous justification. Some further practical demonstration is given in~\cite{hellmers2021}. The proof is condensed and clarified from the version in~\cite[Sec.~5.2]{barker2024}. It should be stressed that this is, in essence, just a variant of the same argument used in~\cite[p.~265]{stanley2012} to construct the set-cardinality PIE using Möbius inversion. Observe also the connection to the decomposition in terms of the meets of an antichain in~\cite{lafuente2005}.

\begin{proposition}\label{prop:gmbe_is_mbe_truncation}
  Let $F = \pdbrace{F_i}_{i=1}^K$ be a not necessarily disjoint fragmentation of $\pdbrack{M}$. Assuming that $V_{\emptyset} = 0$, then for any $1 \leq n \leq K$ there exists an order ideal $I$ of $B_M$ such that $S_I = E^{\text{GMBE}}_{\pdparen{n}}$.
\end{proposition}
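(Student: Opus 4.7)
The strategy is to exhibit an explicit order ideal $I$ of $B_M$ whose combination sum $S_I$ reproduces the GMBE formula~\eqref{eq:gmbe}. The natural candidate is the order ideal generated by the $n$-mers themselves, viewed now as elements of $B_M$: set $A \coloneq \pdbrace{F'_1, \ldots, F'_{K'}} \subseteq B_M$ and take $I \coloneq \pdprod{A}_{B_M} = \bigcup_{i=1}^{K'} \Lambda_{F'_i}$. Since $B_M$ is a meet semilattice under set intersection, with $A$ as the generating antichain of $I$, Proposition~\ref{prop:meets_of_generating_antichain} already guarantees that $D^{\pdparen{I}}_{\bm{u}} = 0$ unless $\bm{u}$ is an intersection of some sub-collection of $A$, which is exactly the structural shape expected from~\eqref{eq:gmbe}.

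Starting from~\eqref{eq:mbe_combination_sum}, $S_I = \sum_{\bm{u} \in I} D^{\pdparen{I}}_{\bm{u}} V_{\bm{u}}$, with the $\bm{u} = \emptyset$ contribution killed by the hypothesis $V_{\emptyset} = 0$. For any non-empty $\bm{u} \in I$, plugging the Boolean Möbius function~\eqref{eq:boolean_algebra_moebius_fn} into~\eqref{eq:combination_coefficient} gives
\begin{equation*}
    D^{\pdparen{I}}_{\bm{u}} = \sum_{\bm{v} \in I,\,\bm{v} \supseteq \bm{u}} \pdparen{-1}^{\abs{\bm{v} - \bm{u}}}.
\end{equation*}
Writing $T_{\bm{u}} \coloneq \Set{i \in \pdbrack{K'} \given \bm{u} \subseteq F'_i}$, the indexing set here is the union of intervals $\bigcup_{i \in T_{\bm{u}}} [\bm{u}, F'_i]$ in $B_M$. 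I would then split this union by the standard set-cardinality PIE --- essentially the textbook Möbius-inversion derivation from~\cite[p.~265]{stanley2012} already referenced in the excerpt --- and apply the elementary identity $\sum_{\bm{w} \subseteq X} \pdparen{-1}^{\abs{\bm{w}}} = 0$ for $X \neq \emptyset$ to collapse each inner sum, obtaining
\begin{equation*}
    D^{\pdparen{I}}_{\bm{u}} = \sum_{\substack{\emptyset \neq T \subseteq T_{\bm{u}}\\\bigcap_{i \in T} F'_i = \bm{u}}} \pdparen{-1}^{\abs{T} + 1}.
\end{equation*}

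To match this with~\eqref{eq:gmbe}, I would regroup the GMBE terms by the value of $\bigcap_{i \in T} F'_i$; any summand with empty intersection is absorbed into $V_{\emptyset} = 0$. The coefficient of $V_{\bm{u}}$ on the GMBE side is then $\sum \pdparen{-1}^{\abs{T} + 1}$ over nonempty $T \subseteq \pdbrack{K'}$ with $\bigcap_{i \in T} F'_i = \bm{u}$, which agrees with $D^{\pdparen{I}}_{\bm{u}}$ above once one observes that the intersection condition automatically forces $T \subseteq T_{\bm{u}}$. This yields $S_I = E^{\text{GMBE}}_{\pdparen{n}}$ term-by-term. The main obstacle is really just the double-PIE bookkeeping in the combination-coefficient calculation; conceptually the result amounts to recognizing the GMBE as a direct Möbius-inversion avatar of set inclusion/exclusion, precisely as anticipated in the remarks preceding the statement.
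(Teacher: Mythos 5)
Your proposal is correct, but it takes a genuinely different route from the paper. You work entirely inside $B_M$: you name the order ideal explicitly as $I = \pdprod{\pdbrace{F'_1, \ldots, F'_{K'}}}_{B_M}$, compute $D^{\pdparen{I}}_{\bm{u}}$ directly by writing $\Set{\bm{v} \in I \given \bm{v} \supseteq \bm{u}}$ as the union of intervals $\pdbrack{\bm{u}, F'_i}$ over $i \in T_{\bm{u}}$, apply inclusion/exclusion to that union, and collapse each inner sum with $\sum_{\bm{w} \subseteq X} \pdparen{-1}^{\abs{\bm{w}}} = 0$ for $X \neq \emptyset$; the resulting coefficient $\sum_{T}\pdparen{-1}^{\abs{T}+1}$ over nonempty $T$ with $\bigcap_{i \in T} F'_i = \bm{u}$ visibly matches the regrouped GMBE, with $V_{\emptyset} = 0$ absorbing the empty-intersection terms exactly as in the paper. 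The paper instead never touches $B_M$ until the last line: it collects the GMBE coefficients $d_{\hat{\bm{u}}}$ on the subposet $\hat{F}$ of all $n$-mer intersections, proves $\sum_{\hat{\bm{v}} \geq \hat{\bm{u}}} d_{\hat{\bm{v}}} = 1$ via a binomial identity, recovers $d_{\hat{\bm{u}}} = D^{\pdparen{\hat{F}}}_{\hat{\bm{u}}}$ by dual Möbius inversion, identifies the GMBE with the SUPANOVA expansion~\eqref{eq:supanova} for $\hat{F}$, and only then lifts to $B_M$ via Theorem~\ref{thm:combination_consistency} (which leaves $I$ implicit, pinned down by Lemma~\ref{lem:unique_consistent_order_ideal}). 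Your argument is more elementary and self-contained --- it needs only~\eqref{eq:combination_coefficient},~\eqref{eq:boolean_algebra_moebius_fn}, and the classical PIE, and it bypasses the combination-consistency machinery entirely --- while the paper's detour buys the conceptually useful byproduct that the GMBE is literally a SUPANOVA expansion over the meet subsemilattice of $n$-mer intersections, which is the framing the rest of the article builds on. Both constructions produce the same order ideal, so there is no discrepancy; your citation of Proposition~\ref{prop:meets_of_generating_antichain} is a consistency check rather than a load-bearing step, which is fine.
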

\begin{proof}
  Let $\pdbrace{F'_i}_{i=1}^{K'}$ be the set of $n$-mers required for an $n$-body GMBE, built from $F$. Notationally, define $F'_{\cap\bm{u}} \coloneq \bigcap_{i \in \bm{u}} F'_i$ for every $\emptyset \subset \bm{u} \subseteq \pdbrack{K'}$. Take $\hat{F} \coloneq \Set{F'_{\cap\bm{u}} \given \emptyset \subset \bm{u} \subseteq \pdbrack{K'}} \cup \pdbrace{\emptyset}$, cf\@.~\cite[p.~265]{stanley2012}, and rewrite~\eqref{eq:gmbe} as
  \begin{equation}\label{eq:rewritten_gmbe}
    E^{\text{GMBE}}_{\pdparen{n}}
    =
    \sum_{\emptyset \subset \bm{u} \subseteq \pdbrack{K'}}
    \pdparen{-1}^{\abs{\bm{u}}+1} E_{F'_{\cap\bm{u}}}
    =
    \sum_{\substack{\hat{\bm{u}} \in \hat{F}\\\hat{\bm{u}} \neq \emptyset}}
    d_{\hat{\bm{u}}} E_{\hat{\bm{u}}},
  \end{equation}
  where we define $d_{\hat{\bm{u}}}$ to be the sum of all $\pm{}1$ coefficients in terms $\pdparen{-1}^{\abs{\bm{u}}+1}E_{F'_{\cap\bm{u}}}$ where $F_{\cap\bm{u}} = \hat{\bm{u}}$, cf\@., e.g.,~\cite{liu2016}.

  We will show that each $d_{\hat{\bm{u}}} = D^{\pdparen{\hat{F}}}_{\hat{\bm{u}}}$, where the latter is the combination coefficient of $\hat{\bm{u}} \in \hat{F}$ viewed as an order ideal of itself. First, fix some non-empty $\hat{\bm{u}} \in \hat{F}$. By construction, there exists at least one $\bm{u} \subseteq \pdbrack{K'}$ such that $F'_{\cap\bm{u}} = \hat{\bm{u}}$; moreover, there exists exactly one such $\bm{u}$ with maximal $\abs{\bm{u}}$, for if there were two or more, $\bm{u}_1$, $\bm{u}_2$, $\ldots$, it would hold that $\hat{\bm{u}} = F'_{\cap\pdparen{\bm{u}_1\cup\bm{u}_2}}$, contradicting the maximality of $\abs{\bm{u}_1}$ and $\abs{\bm{u}_2}$. Note then that for each $\emptyset \subset \bm{v} \subseteq \bm{u}$, it holds that $F'_{\cap\bm{v}} = \hat{\bm{v}}$ for some $\hat{\bm{v}} \geq_{\hat{F}} \hat{\bm{u}}$, and conversely, each $\hat{\bm{v}} \geq_{\hat{F}} \hat{\bm{u}}$ can be written as $F'_{\cap\bm{v}}$ for at least one $\emptyset \subset \bm{v} \subseteq \bm{u}$. Therefore,
  \begin{equation}\label{eq:sum_of_pie_coeffs_equal_unity}
    \sum_{\hat{\bm{v}} \geq_{\hat{F}} \hat{\bm{u}}} d_{\hat{\bm{v}}}
    =
    \sum_{\emptyset \subset \bm{v} \subseteq \bm{u}} \pdparen{-1}^{\abs{\bm{v}}+1}
    = \sum_{j=1}^{\abs{\bm{u}}} \pdparen{-1}^{j+1} \binom{\abs{\bm{u}}}{j}
    = 1 - \sum_{j=0}^{\abs{\bm{u}}} \pdparen{-1}^{j} \binom{\abs{\bm{u}}}{j}
    = 1,
  \end{equation}
  where we use a standard combinatorial identity, namely~\cite[(1.31)]{quaintance2015}. Finally, by Möbius inversion of~\eqref{eq:sum_of_pie_coeffs_equal_unity} considering the dual $\hat{F}^*$ of $\hat{F}$, that is, $\hat{F}$ reordered by $\bm{u} \geq_{\hat{F}^*} \bm{v}$ if and only if $\bm{u} \leq_{\hat{F}} \bm{v}$, and according to~\eqref{eq:combination_coefficient},
  \begin{equation*}
    d_{\hat{\bm{u}}}
    = \sum_{\hat{\bm{v}} \geq_{\hat{F}} \hat{\bm{u}}} \mu_{\hat{F}^*}\pdparen{\hat{\bm{v}}, \hat{\bm{u}}}
    = \sum_{\hat{\bm{v}} \geq_{\hat{F}} \hat{\bm{u}}} \mu_{\hat{F}}\pdparen{\hat{\bm{u}}, \hat{\bm{v}}}
    = D^{\pdparen{\hat{F}}}_{\hat{\bm{u}}}.
  \end{equation*}

  Since $V_{\emptyset} = 0$, then, it follows that~\eqref{eq:rewritten_gmbe} is just~\eqref{eq:supanova} for this particular $\hat{F}$. Since $\hat{F}$ is closed under intersection by construction, it is a meet subsemilattice of $B_M$, and the desired result follows from Theorem~\ref{thm:combination_consistency} by choosing $I' = \hat{F}$.
\end{proof}

We observe that~\eqref{eq:sum_of_pie_coeffs_equal_unity} coincides with a more general ``top-down'' expression for the FCR coefficients derived by a counting argument in~\cite{hellmers2021}, which in our notation is $D^{\pdparen{I}}_{\bm{u}} = 1 - \sum_{\bm{u} \subset \bm{v} \in I} D^{\pdparen{I}}_{\bm{v}}$. In fact, and more generally again, given any finite order ideal $I$ of a suitable poset $P$, dual-form Möbius inversion (see~\cite[Prop.~3.7.2]{stanley2012}) of~\eqref{eq:combination_coefficient} immediately delivers $D^{\pdparen{I}}_s = 1 - \sum_{s < t \in I} D^{\pdparen{I}}_t$ for arbitrary $s \in I$. This can be understood as a property of the Möbius function, cf\@. Exercise 3.88 of~\cite{stanley2012}.

We required $V_{\emptyset} = 0$ in the statement of Proposition~\ref{prop:gmbe_is_mbe_truncation} in order that the truncation $S_I$ should match the GMBE form~\eqref{eq:gmbe}, and this is more generally the intuitively correct selection if each $V_{\bm{u}}$ should provide the total energy of the subsystem indexed by $\bm{u}$ held in isolation. However, in~\cite{liu2016}, the energy terms $E_{\bm{u}}$ in what can be viewed as a one-body GMBE are calculated by embedding subsystems in a field of electrostatic point charges, and an overcounting correction for the self-interactions of those charges is added to~\eqref{eq:gmbe}, with form justified again via the PIE. This correction can be viewed as a non-zero expression for $V_{\emptyset}$, weighted by $d_{\emptyset} = 1 - \sum_{\emptyset \neq \hat{\bm{u}} \in \hat{F}} d_{\hat{\bm{u}}}$ in the above notation, so a special case of~\eqref{eq:sum_of_pie_coeffs_equal_unity} and the expressions in the previous paragraph.

This, then, is the logic behind our general definition of the subproblem potentials $V_{\bm{u}} : \pdparen{\bbR^3 \times \bbZ}^M \to \bbR$. If, e.g., a family of embedding-style potentials are constructed, such that $V_\emptyset$ represents an extremely coarse model of the full system and each $V_{\bm{u}}$ somehow improves the treatment of the subsystem indexed by $\bm{u}$ compared to those by each $\bm{v} \subset \bm{u}$, then a kind of ``overcounting correction'' analogous to that in~\cite{liu2016} is automatically built into any truncation by way of $D^{\pdparen{I}}_{\emptyset}$. This also applies to truncations of the multilevel decomposition~\eqref{eq:poset_grid_multilevel} below. The idea should find natural application to MBE-like expansions constructed using explicitly quantum embedding techniques; see the review~\cite{jones2020} and cf., e.g., quite recent work in~\cite{veccham2019, schmittmonreal2020}. Although we will not explicitly treat quantum-embedding potentials here, we considered some initial experiments in this direction in~\cite{barker2024}; see there for details and discussion. Although these results were mixed, we still consider this a very promising area for future investigation.

\subsection{Constructing consistent graph-based decompositions}\label{sec:graphs}

Most fragmentation methods exploit either distance-based thresholding criteria or some more abstract idea of structural connectivity in order to exclude some energetic terms from calculation; see again the reviews~\cite{gordon2011, collins2015, raghavachari2015, herbert2019} for many examples. Implicitly or explicitly graph-theoretic techniques can be used to choose an initial fragmentation directly, and/or to derive a decomposition~\cite{collins2006, weiss2010, heber2014, griebel2014, chinnamsetty2018, ricard2020, zhang2021, seeber2023}. For graph-theoretic notation and terminology, we follow mostly~\cite{diestel2017}, with general reference also to~\cite{harary1972, cormen2022}.

Fundamental in this context is the representation of a molecular system via an \emph{interaction graph}, that is, an undirected graph $G = \pdparen{V = \pdbrack{M}, E}$, with an edge set chosen to capture some picture of the connectivity of the individual atoms in the system. Although we have also used $V$ to denote potentials and $E$ energies in the above, the use of standard graph-theoretical notation should be unambiguous in context. Surely the canonical choice for an interaction graph is a \emph{bond graph}, where each edge $\pdbrace{i, j} \in E$ corresponds to a covalent bond between the two atoms with indices $i$, $j$. However, the bond graph is by no means the only plausible interaction graph; see, e.g.,~\cite{weiss2010, ricard2018, zhang2021}.

Once an interaction graph is obtained, subproblems can be identified from particular subgraphs of $G$. A full comparison of the many different ways this can be done is beyond the scope of this article. We build particularly on the BOSSANOVA decomposition~\cite{heber2014, griebel2014}, which is essentially a modified MBE, containing terms only for those subsets $\bm{u} \subseteq \pdbrack{M}$ of nuclear indices which induce connected subgraphs $G\pdbrack{\bm{u}}$ of $G$.

We write the set of all such subsets as $\conn\pdbrack{G}$. For examples, consider the graphs shown in Figure~\ref{fig:benzene_hexane_graphs}. In the hexane graph, leftmost, the induced subgraphs $G\pdbrack{\pdbrace{1, 2, 3}}$ and $G\pdbrack{\pdbrace{1, 4, 5, 6}}$ are connected, so both $\pdbrace{1,2,3}$ and $\pdbrace{1, 4, 5, 6} \in \conn\pdbrack{G}$. But $G\pdbrack{\pdbrace{1, 3, 4}}$ is not, and so $\pdbrace{1, 3, 4}$ is not, respectively.

The BOSSANOVA decomposition of some potential $V$ is
\begin{equation}\label{eq:bossanova}
  V
  = \sum_{\bm{u} \in \conn\pdbrack{G}} \tilde{V}^{\text{BOSSANOVA}}_{\bm{u}},
  \qquad
  \tilde{V}^{\text{BOSSANOVA}}_{\bm{u}} \coloneq
  V_{\bm{u}} - \sum_{\substack{\bm{v} \in \conn\pdbrack{G}\\\bm{v} \subset \bm{u}}} \tilde{V}^{\text{BOSSANOVA}}_{\bm{v}}.
\end{equation}
An informal motivation of BOSSANOVA, consistent with Kohn's famous nearsightedness principle~\cite{prodan2005}, is that at any given order $1 \leq \abs{\bm{u}} \leq M$, the connected subgraphs should provide a tidy way to select the most important $\abs{\bm{u}}$-body interactions in the system. 

Of course,~\eqref{eq:bossanova} is just an instantiation of~\eqref{eq:supanova} for $\hat{F} = \conn\pdbrack{G}$. As mentioned above,~\eqref{eq:supanova} and thus~\eqref{eq:bossanova} can be recognized as special cases of the very general \emph{chemical graph-theoretic cluster expansion} (CGTCE)~\cite{klein1986}. Since the set of all induced subgraphs of $G$ ordered by their inducing sets is clearly isomorphic to $B_M$~\cite{nieminen1980},  it is only a matter of perspective to regard any decomposition of the form~\eqref{eq:supanova} as being determined by a particular collection of induced subgraphs rather than by a particular subposet of $B_M$. Hence, we will refer to any such decomposition as a \emph{SUPANOVA decomposition}, for \emph{SUbgraph Poset ANOVA}. We take the liberty of using a different name since, in context, we treat this setup primarily as a useful extension on the BOSSANOVA decomposition, and we will extend it below to generalize on the multilevel ML-BOSSANOVA scheme~\cite{chinnamsetty2018}. To our reading, such a multilevel adjustment to the CGTCE was not envisioned in~\cite{klein1986}.

\begin{figure}[t]
  {\centering
    \includegraphics{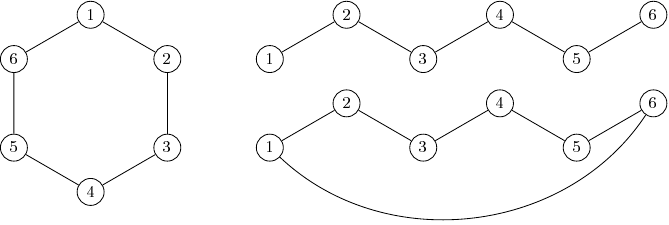}

  }
  \caption{Example fragment interaction graphs for benzene (\ch{C6H6}, left) and hexane (\ch{C6H14}, right). The uppermost graph for hexane shows the covalent bond structure; the lowermost is augmented with a single additional edge.}
  \label{fig:benzene_hexane_graphs}
\end{figure}

Although the results of initial experiments with $n$-body truncations of the BOSSANOVA decomposition over chain-type molecules were highly encouraging~\cite{heber2014, griebel2014}, difficulties arose for molecules containing ring substructures. These were initially ascribed to physical effects in ring fragmentation. While these are important, a more subtle, yet critical issue becomes obvious in light of Theorem~\ref{thm:combination_consistency}.

It is rather easy to find interaction graphs $G$ where $\conn\pdbrack{G}$ is not closed under intersection and therefore cannot be a meet subsemilattice of the appropriate Boolean algebra. This is true for two of the three graphs shown in Figure~\ref{fig:benzene_hexane_graphs}. In the hexane graph, for instance, the intersection $\pdbrace{1, 2, 3, 4} \cap \pdbrace{1, 4, 5, 6} = \pdbrace{1,4}$ does not induce a connected subgraph. So in this case, $\conn\pdbrack{G}$ is not a meet subsemilattice of $B_6$. Rephrasing Theorem~\ref{thm:combination_consistency}, this means that at least one downwards-closed $I'$-truncation of~\eqref{eq:bossanova} has no equivalent downwards-closed $I$-truncation of the MBE~\eqref{eq:nuclear_mbe}. In fact, there are multiple such, and these truncations systematically miscount MBE contribution terms $\tilde{V}_{\bm{u}}$. A worked example demonstrating this effect is given in the supporting information. Interestingly, this seems to be the same issue which necessitates the ``ring repair rule'' in the SMF method~\cite{collins2006}, cf\@.~\cite{collins2012}, although we do not attempt to argue this rigorously.

This is not an isolated case. The conditions on a general graph $G$ under which $\conn\pdbrack{G}$ is a sublattice of $B_M$ were established in~\cite{nieminen1980}. It follows in particular from~\cite[Lems.~1 and 2]{nieminen1980} that $\conn\pdbrack{G}$ cannot be combination-consistent with $B_M$ if $G$ contains certain forbidden induced subgraphs, specifically any chordless cycle of length greater than three, or a cycle of length four with one chord. For details and more precise formulation, see~\cite[Sec.~6.3]{barker2024}. But this rules out the reliable application of the BOSSANOVA decomposition~\eqref{eq:bossanova} in terms of, for instance, the bond graphs of the vast majority of cyclic compounds.

We seek, then, a suitable subposet of subgraphs of $G$ that embody a similar kind of locality to the connected subgraphs, while still being a combination-consistent subposet of $B_M$. One possibility is to use the geodesically convex subgraphs of $G$. In fact, these were mentioned on~\cite[p.~156]{klein1986} as providing a ``well-behaved'' option in a particular application of the CGTCE, namely the construction of a certain coupled cluster-like expansion. We give here only a self-contained definition of the geodesically convex subgraphs of a graph that is essentially the same as that used there. A deeper understanding of these and other kinds of convex subgraphs should be grounded in the theory of abstract convexity; see, e.g.,~\cite{edelman1985} for general background,~\cite{pelayo2013} for a treatment of graph convexities in particular, and~\cite[Sec.~6.5]{barker2024} for more detailed summary and further references. We base our notation on that in, e.g.,~\cite{goddard2011}.

\begin{definition}[Geodesically convex subgraphs of an interaction graph]
  Let $G$ be a connected interaction graph, with vertex set $V = \pdbrack{M}$. An induced subgraph $G\pdbrack{\bm{u}}$ of $G$ is \emph{(geodesically) convex} if, for any pair of vertices $i$, $j \in \bm{u}$, all vertices along any shortest path between $i$ and $j$ in the full graph $G$ lie also in $G\pdbrack{\bm{u}}$. The \emph{geodesic convexity} of $G$, written $\mathcal{M}_g\pdbrack{G}$, is the subposet of $B_M$ formed by all $\bm{u} \in B_M$ such that $G\pdbrack{\bm{u}}$ is a convex subgraph of $G$.
\end{definition}

Although any convex subgraph is a connected subgraph, the inverse is not necessarily true. In the benzene graph in Figure~\ref{fig:benzene_hexane_graphs}, $G\pdbrack{\pdbrace{1, 2, 3}}$ is a convex subgraph, so $\pdbrace{1,2,3} \in \mathcal{M}_g\pdbrack{G}$. But $\pdbrace{1, 4, 5, 6}$ is not, since a shortest path between vertices 1 and 4 passes through vertices 2 and 3.

Combination-consistency of $\mathcal{M}_g\pdbrack{G}$ with $B_M$ results from the obvious fact that if $G\pdbrack{\bm{u}}$ and $G\pdbrack{\bm{v}}$ are convex subgraphs of $G$, then so too is $G\pdbrack{\bm{u} \cap \bm{v}}$. For example, the subposet of $B_6$ shown in Figure~\ref{fig:convex_subgraphs_subposet} is precisely $\mathcal{M}_g\pdbrack{G}$ for the benzene graph, or identically the augmented hexane graph, shown in Figure~\ref{fig:benzene_hexane_graphs}. It was on this basis that we could assert in Section~\ref{sec:inversion} the combination-consistency of the drawn subposet with $B_6$.

However, $\mathcal{M}_g\pdbrack{G}$ is not always an ideal poset for use in a SUPANOVA expansion~\eqref{eq:supanova}. An important result~\cite[Thm.~4.1]{farber1986} shows that $\mathcal{M}_g\pdbrack{G}$ fails in many cases of practical interest to be a \emph{convex geometry}~\cite{edelman1985}. In particular, and informally, subsets $\bm{u} \prec_{\mathcal{M}_g\pdbrack{G}} \bm{v}$ may then have substantially different sizes $\abs{\bm{u}}$ and $\abs{\bm{v}}$; cf.~\cite[Thm.~2.1]{edelman1985}. This has practical implications for the adaptive algorithm we give below. It would be very interesting to investigate whether general schemes could be developed for the construction and/or adjustment of $G$ in a way that would ensure that $\mathcal{M}_g\pdbrack{G}$ is a convex geometry, but this remains for future work.

There is an interesting connection between the decomposition~\eqref{eq:supanova} for $\mathcal{M}_g\pdbrack{G}$, which we will call a \emph{convex SUPANOVA} decomposition, and the energy expression of a multilevel graph-based fragmentation method described in, e.g.,~\cite{ricard2020, zhang2021}. We consider explicitly only a non-multilevel expression given in~\cite{zhang2021}. If an induced subgraph $G\pdbrack{\bm{u}}$ of $G$ is complete, that is, if there is an edge in $G$ between every pair of vertices in $\bm{u}$, then it is called in the language of the scheme a \emph{rank-$r$ simplex}, where $r = \abs{\bm{u}} - 1$. We write $\comp_R\pdbrack{G}$ to collect those subsets of $\pdbrack{M}$ which induce simplices of rank up to some $0 \leq R \leq M-1$. Then the energy expression is, adapting~\cite[(3)]{zhang2021},
\begin{equation}\label{eq:simplex_nonml}
  E^{\text{simplex}}_R =
  \sum_{r=0}^R \pdparen{-1}^r
  \sum_{\substack{\bm{u} \in \comp_R\pdbrack{G}\\\abs{\bm{u}} = r + 1}}
  E_{\bm{u}}
  \pdparen*{
    \sum_{m=r}^R
    \pdparen{-1}^m p^{m}_{\bm{u}}
  },
\end{equation}
where $p^{m}_{\bm{u}}$ counts the number of rank-$m$ simplices which include the rank-$r$ simplex $G\pdbrack{\bm{u}}$ as a subgraph.

As noted for simplices in, e.g.,~\cite{ricard2020}, if $\bm{u} \subseteq \bm{v}$, and $G\pdbrack{\bm{v}}$ is complete, then so too is $G\pdbrack{\bm{u}}$. As a result, $\comp_R\pdbrack{G}$ is an order ideal of $B_M$. The expression~\eqref{eq:simplex_nonml} can be obtained directly by manipulation of~\eqref{eq:mbe_combination_sum} after a corresponding insertion of~\eqref{eq:boolean_algebra_moebius_fn}; indeed, this is just a special case of an expression already given in the FCR setting, see~\cite[(14), (15)]{koenig2016}. So~\eqref{eq:simplex_nonml} is simply a particular truncation of~\eqref{eq:nuclear_mbe}; this was recognized in, e.g.,~\cite{ricard2020, zhang2021}. But note also that every complete induced subgraph of $G$ is also a convex subgraph, although not necessarily vice-versa. As a result, $\comp_R\pdbrack{G}$ is also an order ideal of $\mathcal{M}_g\pdbrack{G}$, and so~\eqref{eq:simplex_nonml} is one truncation of a convex SUPANOVA decomposition.

It is not hard to imagine situations where it would be important to have access to the full range of convex SUPANOVA truncations, rather than only the subset provided by~\eqref{eq:simplex_nonml}. Consider, for instance, a chain system of disjoint fragments, for instance, a linear alkane like that shown upper-right in Figure~\ref{fig:benzene_hexane_graphs}. Here, the only simplices would be the empty graph ($R=-1$), the individual fragments ($R=0$), and neighboring pairs ($R=1$). The full graph $G$ is not complete, so $E^{\text{simplex}}_R$ cannot ``converge'' to the full-system energy as $R$ increases, and, e.g., neighboring triples of fragments are not simplices, so the descriptive power of the simplex approach for longer-range interactions is inherently limited to the edge set of $G$. Were a full convex SUPANOVA expansion considered instead, the adaptive method for truncation selection that we outline below should be able to explore up through the additional terms, and also obviate the need to explicitly preselect a maximum rank $R$ as a parameter.

\subsection{Combining composite and fragmentation methods}\label{sec:ml}

Although the true Born-Oppenheimer potential $V^{\text{BO}}$ can in principle be expanded as, e.g., an MBE like~\eqref{eq:nuclear_mbe}, this is hardly useful, since the potentials $V_{\bm{u}}$ still remain numerically inaccessible. Practically, instead, one must consider an expansion instead of a full-system potential $V_{m,p}$ at a single level of theory. Multilevel fragmentation methods~\cite{dahlke2007, beran2009, rezac2010, bates2011, chinnamsetty2018, ricard2020, hellmers2021, zhang2021} attempt to achieve an effective approximation of such a full-system potential at a high-quality level of theory via evaluations of some of the subproblem potentials $V_{\bm{u}}$ at the same high level, and of others at a lower and thus computationally cheaper level. In this section, we interlace ideas from the previous sections with those found in existing multilevel methods in order to derive a new and very general multilevel fragmentation approach.

By \emph{multilevel}, we refer most generally to any scheme which produces a single approximate solution to~\eqref{eq:schroedinger} from a collection of other and presumably less-accurate solutions calculated from multiple levels of theory. The composite methods mentioned in Section~\ref{sec:fundamentals} clearly form a class of multilevel approach~\cite{zaspel2019}. Another, very well-known such class is provided by the multilayered ONIOM method~\cite{svensson1996} and its many variants and derivatives~\cite{chung2015}. Stated most simply, the principle of ONIOM is to approximate the total energy $E^{\text{HL}}$ of a molecular system as per some high level of theory as
\begin{equation*}
    E^{\text{HL}} \approx E^{\text{LL}} + \pdparen*{E^{\text{HL}}_{\bm{u}} - E^{\text{LL}}_{\bm{u}}},
\end{equation*}
see and cf.~\cite[(3)]{svensson1996}. Here, $E^{\text{LL}}$ is the full-system total energy calculated instead using a lower level of theory, and $E^{\text{HL}}_{\bm{u}}$ and $E^{\text{LL}}_{\bm{u}}$ the high/low-level total energies of a subsystem indexed by $\bm{u} \subset \pdbrack{M}$ and deserving of accentuation. The idea extends to a successively-nested chain of $n$ subsystems $\pdbrack{M} = \bm{u}_1 \supset \bm{u}_2 \supset \cdots \supset \bm{u}_n$, each matched with an increasingly stronger level of theory; see and cf.~\cite[(4)]{svensson1996}. 

The core ONIOM idea has provided a springboard for the development of many multilevel fragmentation methods; see, e.g.,~\cite{bates2011, mayhall2011, ricard2020, zhang2021, seeber2023}, and also again~\cite{gordon2011, chung2015, raghavachari2015, herbert2019}. To connect these and other multilevel fragmentation methods with composite methods, we introduce a very general expansion of some potential $V : \pdparen{\bbR^3 \times \bbN}^M \to \bbR$. We begin by recalling, see, e.g.,~\cite{stanley2012}, that the \emph{direct product} of two posets $P \times Q$ is ordered such that $\pdparen{s, t} \leq_{P \times Q} \pdparen{s', t'}$ iff $s \leq_P s'$ and $t \leq_Q t'$. The following basic result is very helpful; we omit the proof.

\begin{theorem}[Product theorem~\cite{rota1964, stanley2012}]\label{thm:product}
  For the direct product $P \times Q$ of two locally finite posets $P$ and $Q$, it holds that
  \begin{equation}\label{eq:product_thm}
    \mu_{P \times Q}\pdparen{\pdparen{s, t}, \pdparen{s', t'}} =
      \mu_P\pdparen{s, s'} \mu_Q\pdparen{t, t'}.
  \end{equation}
\end{theorem}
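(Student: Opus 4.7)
The plan is to exploit the characterization of the Möbius function of a locally finite poset as the unique solution $m : P \times P \to \bbR$ to the system $m(s,t) = 0$ for $s \not\leq t$, $m(s,s) = 1$, and $\sum_{s \leq u \leq t} m(s,u) = 0$ for $s < t$. This latter summation condition can be equivalently written as $\sum_{s \leq u \leq t} m(s, u) = \pdbrack{s = t}$ for $s \leq t$, using Iverson-bracket notation. If we can show that the function $\bar{\mu} : \pdparen{P \times Q}^2 \to \bbR$ defined by $\bar{\mu}\pdparen{\pdparen{s, t}, \pdparen{s', t'}} \coloneq \mu_P\pdparen{s, s'} \mu_Q\pdparen{t, t'}$ satisfies this same characterization on $P \times Q$, then by uniqueness we must have $\bar{\mu} = \mu_{P \times Q}$.

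First I would dispatch the case $\pdparen{s, t} \not\leq_{P \times Q} \pdparen{s', t'}$: here, by definition of the product order, at least one of $s \not\leq_P s'$ and $t \not\leq_Q t'$ holds, so the corresponding factor in $\bar{\mu}$ vanishes, matching $\mu_{P \times Q} = 0$ on this case. Next, clearly $\bar{\mu}\pdparen{\pdparen{s, t}, \pdparen{s, t}} = \mu_P(s, s) \mu_Q(t, t) = 1$.

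The main computation is to verify the summation condition for $\pdparen{s, t} \leq_{P \times Q} \pdparen{s', t'}$. Observe that intervals in $P \times Q$ factor: $\pdbrace{(u, v) \given (s, t) \leq (u, v) \leq (s', t')} = \pdbrace{u \given s \leq u \leq s'} \times \pdbrace{v \given t \leq v \leq t'}$, and this product is finite by local finiteness of both factors. Then
\begin{equation*}
  \sum_{\pdparen{s,t} \leq \pdparen{u,v} \leq \pdparen{s',t'}} \bar{\mu}\pdparen{\pdparen{s,t},\pdparen{u,v}}
  = \pdparen*{\sum_{s \leq u \leq s'} \mu_P(s, u)} \pdparen*{\sum_{t \leq v \leq t'} \mu_Q(t, v)}
  = \pdbrack{s = s'} \pdbrack{t = t'} = \pdbrack{\pdparen{s, t} = \pdparen{s', t'}},
\end{equation*}
as required. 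Thus $\bar{\mu}$ satisfies the characterizing identity, and the product formula~\eqref{eq:product_thm} follows by uniqueness.

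There is no real obstacle: the argument is a direct check, and the only subtlety lies in recognizing that the extended definition of $\mu_P$ used in the paper (with $\mu_P(s, t) = 0$ when $s \not\leq t$) is precisely what makes the case analysis clean, since it ensures the factorization of intervals translates into an unconditional factorization of the Möbius sum without having to separately verify order-compatibility of the factors.
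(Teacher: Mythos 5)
Your proof is correct. The paper itself omits the proof of this theorem entirely, deferring to the cited references (Rota; Stanley), and your argument is essentially the standard one found there: verify that the candidate function $\bar{\mu}$ satisfies the recurrence characterizing the Möbius function on $P \times Q$, using the factorization of intervals in a direct product and uniqueness of the solution to that recurrence. The one step you lean on without spelling out is the uniqueness of the solution to the characterizing system, which follows by induction on the (finite) cardinality of the interval $\pdbrack{\pdparen{s,t}, \pdparen{s',t'}}$; this is routine, and your observation that the paper's extended convention $\mu_P\pdparen{s,t} = 0$ for $s \not\leq t$ makes the interval factorization translate cleanly into a factorization of the sum is exactly right.
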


Call the members of some family of locally finite posets
$\pdbrace{P_i}_{i=1}^d$ to be poset \emph{axes}, and by extension their direct
product $\Pi = P_1 \times \cdots \times P_d$ to be a $d$-dimensional poset
\emph{grid}. Note that $\mu_{\Pi}$ can be obtained from~\eqref{eq:product_thm} by recursively rewriting, e.g., $\Pi = P_1 \times \pdparen{P_2
  \times \cdots \times P_d}$. The axes $P_i$ need not be finite, but we will
assume them to each have a $\hat{0}$. Then, analogously to above, we assume
the existence of some family of subproblem potentials
$\pdbrace{V_{\bm{p}}}_{\bm{p} \in \Pi}$, and expand some $V : \pdparen{\bbR^3
  \times \bbN}^M \to \bbR$ as
\begin{equation}\label{eq:poset_grid_multilevel}
  V = \sum_{\bm{p} \in \Pi} \tilde{V}_{\bm{p}},
  \qquad
  \tilde{V}_{\bm{p}}
  \coloneq
  V_{\bm{p}} - \sum_{\bm{q} < \bm{p}} \tilde{V}_{\bm{q}}
  =
  \sum_{\bm{q} \leq \bm{p}} \mu_{\Pi}\pdparen{\bm{q}, \bm{p}} V_{\bm{q}}.
\end{equation}
This equality, and convergence of the sum in the case of one or more infinite axes $P_i$, is conditional on the precise choice of subproblem potentials. The idea here is to choose these and also the overall grid $\Pi$ in such a way that pointwise convergence to $V^{\text{BO}}$ holds, practically if not provably. In any case, given any finite order ideal $I$ of $\Pi$, the $I$-truncation $S_I = \sum_{\bm{p} \in I} \tilde{V}_{\bm{p}}$ certainly exists, and can be converted into a sum in terms of combination coefficients $D^{\pdparen{I}}_{\bm{p}}$ and subproblem potentials $V_{\bm{p}}$.

For particular choices of $\Pi$, the expansion form~\eqref{eq:poset_grid_multilevel} captures not only the nuclear MBE~\eqref{eq:nuclear_mbe}, the fragment MBE~\eqref{eq:fragment_mbe}, the  BOSSANOVA decomposition~\eqref{eq:bossanova} and its SUPANOVA (or CGTCE) generalization~\eqref{eq:supanova}, but also, for $\Pi = \pdbrack{2N-1} \times \bbN$, the composite-style expansion~\eqref{eq:bo_composite_expansion}. It can also reproduce a number of existing multilevel fragmentation methods. In the interest of space, we will only explicitly show this for the relatively recent example of the multilevel ML-FCR approach~\cite{hellmers2021}, which is itself an extension on the original FCR method. Note that, in the case of $B_M$, the FCR expression~\eqref{eq:fcr} was completely equivalent to an $I$-truncation $S_I$, and the benefit of our approach was in the supporting theoretical tools provided. In the multilevel setting, however, although the ML-FCR is already very general, we can extend it further.

Only a two-level ML-FCR formulation is explicitly given in~\cite{hellmers2021}. An extension to further layers is suggested but not given. Such an extension seems to be in use in~\cite{seeber2023}, but we restrict ourselves here to the original formulation. Two separate downward-closed sets of potentially-overlapping fragments are considered, which we write as $\pdbrace{\text{FCR}_{\text{HL}}}$ and $\pdbrace{\text{FCR}_{\text{LL}}}$. Generally, one would expect the former to be a subset of the latter, but the involved formulae work correctly even when this relationship is inverted. Adjusting notation to match~\eqref{eq:fcr} above, the ML-FCR energy equation is
\begin{equation}\label{eq:mlfcr}
  E^{\text{ML-FCR}}\pdparen{z_1, \ldots, z_K}
  =
  \sum_{\bm{f}_l \in \pdbrace{\text{FCR}_{\text{HL}}}} p^{\text{HL}}_{\bm{f}_l} E^{\text{HL}}_{\bm{f}_l}\pdparen{\pdbrace{z}_{\bm{f}_l}} 
  +
  \sum_{\bm{f}_l \in \pdbrace{\text{FCR}_{\text{LL}}}} p^{\text{LL}}_{\bm{f}_l} E^{\text{LL}}_{\bm{f}_l}\pdparen{\pdbrace{z}_{\bm{f}_l}},
\end{equation}
see and cf.~\cite[(15)]{hellmers2021}. Each energy term $E^{\text{HL}}_{\bm{f}_l}$ is calculated with a somehow high-level method, and each $E^{\text{LL}}_{\bm{f}_l}$ with a low-level method. The high-level coefficients $p^{\text{HL}}_{\bm{f}_l}$ are non-multilevel FCR coefficients~\eqref{eq:fcr_coefficient} for $\bm{f}_l$ in $\pdbrace{\text{FCR}_{\text{HL}}}$. The high- and low-level coefficients are connected by
\begin{equation}\label{eq:mlfcr_coefficients}
  p^{\text{HL}}_{\bm{f}_l} +
  p^{\text{LL}}_{\bm{f}_l} = p^{\pdbrace{\text{FCR}_{\text{HL}}} \cup
  \pdbrace{\text{FCR}_{\text{LL}}}}_{\bm{f}_l},
\end{equation}
motivated in~\cite{hellmers2021} via a counting argument, where similarly $p^{\pdbrace{\text{FCR}_{\text{HL}}} \cup \pdbrace{\text{FCR}_{\text{LL}}}}_{\bm{f}_l}$ is the non-multilevel FCR coefficient for $\bm{f}_l$ in $\pdbrace{\text{FCR}_{\text{HL}}} \cup \pdbrace{\text{FCR}_{\text{LL}}}$.

To reconstruct the ML-FCR from~\eqref{eq:poset_grid_multilevel}, including a generalization to multiple layers, we choose $\Pi = B_M \times \pdbrack{n}$, for some $n \geq 1$. For an order ideal $I$ of $\Pi$, we can write $I = \bigcup_{i=1}^n I_i \times \pdbrace{i}$, where each $I_i$ is an order ideal of $B_M$ and also $I_1 \supseteq I_2 \supseteq \cdots \supseteq I_n$. Recalling~\eqref{eq:combination_coefficient} and using~\eqref{eq:product_thm}, we have that
\begin{equation}\label{eq:i_eq_n_ml_coeffs}
  D^{\pdparen{I}}_{\pdparen{\bm{u}, n}}
  = \sum_{\substack{\bm{v} \supseteq \bm{u}\\\bm{v} \in I_n}} \mu_{\Pi}\pdparen{ \pdparen{\bm{u}, n}, \pdparen{\bm{v}, n} }
  = \sum_{\substack{\bm{v} \supseteq \bm{u}\\\bm{v} \in I_n}} \mu_{B_M}\pdparen{ \bm{u}, \bm{v} }
  = D^{\pdparen{I_n}}_{\bm{u}},
\end{equation}
where the final $D^{\pdparen{I_n}}_{\bm{u}}$ is a combination coefficient for an $I_n$-truncation of a sum indexed from $B_M$. Clearly $D^{\pdparen{I}}_{\pdparen{\bm{u}, i}} = 0$ when $i > n$. When $i < n$,
\begin{equation}\label{eq:i_lt_n_ml_coeffs}
  \begin{aligned}
  D^{\pdparen{I}}_{\pdparen{\bm{u}, i}}
  &=
  \sum_{j=i}^n \sum_{\substack{\bm{v} \supseteq \bm{u}\\\bm{v} \in I_j}} \mu_{B_M}\pdparen{\bm{u}, \bm{v}} \mu_{\pdbrack{n}}\pdparen{i,j}
  \\
  &=
  \sum_{\substack{\bm{v} \supseteq \bm{u}\\\bm{v} \in I_i}} \mu_{B_M}\pdparen{\bm{u}, \bm{v}}
  -
  \sum_{\substack{\bm{v} \supseteq \bm{u}\\\bm{v} \in I_{i+1}}} \mu_{B_M}\pdparen{\bm{u}, \bm{v}}
  =
  D^{\pdparen{I_i}}_{\bm{u}} - D^{\pdparen{I_{i+1}}}_{\bm{u}},
  \end{aligned}
\end{equation}
then~\eqref{eq:mlfcr} emerges by fixing $n = 2$, and choosing $I_1 = \pdbrace{\text{FCR}_{\text{LL}}}$ and $I_2 = \pdbrace{\text{FCR}_{\text{HL}}}$. The coefficients $p^{\text{HL}}_{\bm{f}_l}$ and $p^{\text{LL}}_{\bm{f}_l}$ are  recovered from~\eqref{eq:i_eq_n_ml_coeffs} and~\eqref{eq:i_lt_n_ml_coeffs}, cf\@.~\cite[(17), (18)]{hellmers2021}, and these also give a rederivation of~\eqref{eq:mlfcr_coefficients}.

Other choices of $I_1$ and $I_2$, or, viewed alternatively, $\pdbrace{\text{FCR}_{\text{LL}}}$ and $\pdbrace{\text{FCR}_{\text{HL}}}$, lead, after some straightforward manipulation and sometimes with recourse to the argument in Proposition~\ref{prop:gmbe_is_mbe_truncation}, also to the working equations of a variety of other multilevel fragmentation methods. These include a multilevel version of the EE-MB~\cite{dahlke2007}, the HMBI~\cite{beran2009}, the MFBA~\cite{rezac2010}, and the MC QM/QM method~\cite{bates2011}. The full multilevel form of the simplex-based method in~\cite{ricard2020, zhang2021} can also be recovered via the discussion of~\eqref{eq:simplex_nonml} above, and for more general $n$, the MIM~\cite{mayhall2011} emerges. We omit the details for reasons of space, but see~\cite[Sec.~7.2]{barker2024}. Equivalences between some of these methods have certainly been mentioned before, as in, e.g.,~\cite{beran2009, hellmers2021}, but this provides further support for the generality of the ML-FCR approach, at least for existing fragmentation methods.

Alternatively, given an interaction graph $G$, choosing in~\eqref{eq:poset_grid_multilevel} instead $\Pi = \conn\pdbrack{G} \times \bbN$ delivers the ML-BOSSANOVA extension~\cite{chinnamsetty2018} to the original BOSSANOVA scheme. ML-BOSSANOVA considers a family of potentials $\pdbrace{V_{p}}_{p \in \bbN}$, where each $p$ indicates the use of a progressively better-quality basis set than for $p-1$. The potentials $V_{p}$ are assumed to converge to the true $V^{\text{BO}}$ as $p \to \infty$. Each $V_p$ is decomposed as $V_p = \sum_{\bm{u}\in\conn\pdbrack{G}} \tilde{V}_{p,\bm{u}}$, with $\tilde{V}_{p,\bm{u}}$ defined equivalently as in~\eqref{eq:bossanova}. Each $\tilde{V}_{p,\bm{u}}$ is then itself decomposed as
\begin{equation}
  \tilde{V}_{p,\bm{u}} = \sum_{q=0}^p \tilde{\omega}_{q,\bm{u}},
  \quad
  \text{with}
  \quad
  \tilde{\omega}_{q,\bm{u}} = \begin{cases}
    \tilde{V}_{q,\bm{u}} - \tilde{V}_{q-1,\bm{u}}&\text{when $q \geq 1$, and}\\
    \tilde{\omega}_{0,\bm{u}} = \tilde{V}_{0,\bm{u}}&\text{as a special case.}
  \end{cases}
\end{equation}
A downwards-closed truncation of the resulting exact decomposition
\begin{equation}\label{eq:mlbossanova}
  V^{\text{BO}} =
  \sum_{\bm{u} \in \conn\pdbrack{G}}\sum_{p \in \bbN} \tilde{\omega}_{p,\bm{u}}
\end{equation}
can be grown adaptively, and should, informally, capture the non-negligible terms from an expansion of $V^{\text{BO}}$ constructed similarly, but over the full $B_M$ rather than just $\conn\pdbrack{G}$. This latter would be, effectively, an infinitely multilevel MBE.

\begin{figure}[t]
  {\centering
    \includegraphics{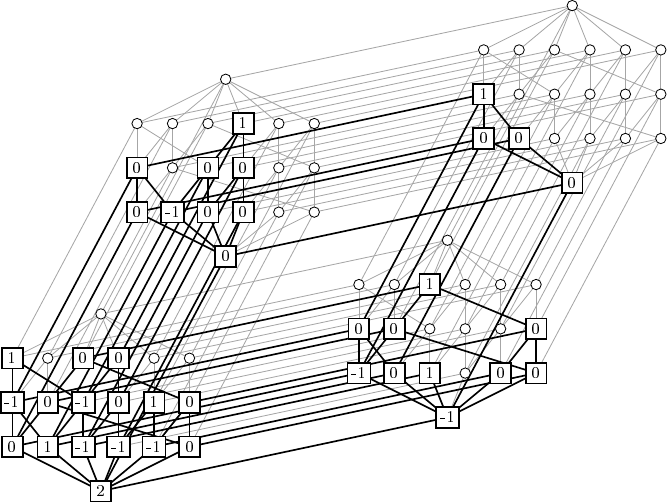}

  }
  \caption{A Hasse-like diagram of the poset grid $\Pi = \mathcal{M}_g\pdbrack{G} \times \pdbrack{2} \times \pdbrack{2}$, for $G$ either the benzene or augmented hexane interaction graphs in Figure~\ref{fig:benzene_hexane_graphs}. Each vertex corresponds to a term $\tilde{V}_{\pdparen{\bm{u}, m, p}}$ in an ML-SUPANOVA expansion like~\eqref{eq:ml_supanova_expansion_explicit}, but restricted to the convex subgraphs of $G$, and using \emph{ab initio} treatments with, e.g., HF and MP2, and a pair of basis sets like, e.g., cc-pVDZ and cc-pVTZ. Rectangular vertices indicate elements and combination coefficients of a particular order ideal of this grid, or equivalently, the terms of a corresponding downwards-closed truncation of the expansion.}
  \label{fig:ml_supanova_example_grid}
\end{figure}

We extend on ML-BOSSANOVA in two distinct ways. Firstly: like all of the other multilevel methods discussed above, ML-BOSSANOVA considers only a single axis for the level of theory. However, using~\eqref{eq:poset_grid_multilevel}, it is straightforward to introduce arbitrarily many axes. This can be motivated as approximating each $\tilde{V}_{\bm{u}}$ in an MBE of $V^{\text{BO}}$ in the style of a composite method, following the combination-technique understanding in~\cite{chinnamsetty2018, zaspel2019}. The starting point is thus the Boolean algebra $B_M$ corresponding to an ANOVA-like decomposition of $V$. The next axis, $P_2 = \pdbrack{2N-1}$, indexes the various levels of \emph{ab initio} theory outlined in~\eqref{eq:qc_method_hierarchy} above. Then we introduce a third axis $P_3 = \bbN$, which indexes members of a systematically-improving family of basis sets, as in ML-BOSSANOVA. Again under appropriate assumptions regarding convergence, the resulting poset grid $\Pi = B_M \times P_2 \times P_3$ produces an exact expansion of the Born-Oppenheimer potential as
\begin{equation}\label{eq:ml_supanova_expansion_explicit}
  V^{\text{BO}}
  = \sum_{\bm{p} \in \Pi} \tilde{V}_{\bm{p}}
  =
  \sum_{\bm{u} \in B_M}
  \sum_{m = 1}^{2N-1}
  \sum_{p=1}^\infty
  \tilde{V}_{\pdparen{\bm{u}, m, p}}.
\end{equation}
The first axis can of course be restricted to a subposet of $B_M$, or isomorphically, a subposet of induced subgraphs of some $G$. Taking the subgraph perspective, we refer to~\eqref{eq:ml_supanova_expansion_explicit} as the most general kind of \emph{ML-SUPANOVA decomposition}.

Secondly: the use of $\conn\pdbrack{G}$ in~\eqref{eq:mlbossanova} can be problematic, much as in the original BOSSANOVA scheme. It is easy to see that a direct product $P \times Q$ is a meet semilattice if and only if both $P$ and $Q$ are. So ML-BOSSANOVA's choice of $\Pi = \conn\pdbrack{G} \times \bbN$ may not be combination-consistent with $B_M \times \bbN$, depending on the structure of $G$, and particularly when $G$ contains cycles such as might be introduced by ring substructures of a molecule. The solution is the same: choose a more appropriate subposet of $B_M$, and thus, for~\eqref{eq:ml_supanova_expansion_explicit}, of $\Pi = B_M \times \pdbrack{2N-1} \times \bbN$. We suggest as a starting point using the geodesic complexity $\mathcal{M}_g\pdbrack{G}$ to produce a \emph{convex ML-SUPANOVA decomposition}; every downwards-closed truncation of this decomposition is then guaranteed to have a counterpart truncation of the full~\eqref{eq:ml_supanova_expansion_explicit}. An illustration of the poset underlying a sample ML-SUPANOVA decomposition, restricted to $\Pi = \mathcal{M}_g\pdbrack{G} \times \pdbrack{2} \times \pdbrack{2}$, is shown in Figure~\ref{fig:ml_supanova_example_grid}. Note that in this particular case, the restriction of the basis-set and \emph{ab initio} axes would make the expansion technically one of $V_{\pdbrack{6}, 2, 2}$, and so only an approximation to $V^{\text{BO}}$.

\section{Adaptive Algorithm}\label{sec:adaptive}

Standard $n$-body truncations of MBEs become for increasing $n$ at best inefficient, and at worst numerically unstable~\cite{richard2014, herbert2019}, and so choosing the correct value of $n$ for use in practice is difficult. When considering expansions like~\eqref{eq:supanova} and especially multilevel forms like~\eqref{eq:ml_supanova_expansion_explicit}, the task of choosing suitable truncations \emph{a priori} becomes even harder again. Algorithm~\ref{alg:adaptive} outlines instead an adaptive, \emph{a posteori} approach for the selection of a truncation order ideal $I$ for a sum~\eqref{eq:poset_grid_multilevel} over some poset grid $\Pi$. This algorithm is a specialization of one given in a more general setting in~\cite[Chap.~3]{barker2024}, and closely related also to that in~\cite{chinnamsetty2018}. The basic idea of the algorithm goes back to work in dimensionally-adaptive numerical quadrature~\cite{gerstner2003}, and there are many other strong connections in the context of sparse grid approximation and the combination technique; see, e.g.,~\cite{hegland2003, bungartz2004, hegland2007}. We observe with interest that a similar approach is taken in the ``bottom-up'' energy-screening algorithm described very recently in~\cite{broderick2023}, which walks a directed acyclic graph constructed for the terms of the GMBE. As there noted, this graph allows interpretation as a Hasse diagram of subsets, and Algorithm~\ref{alg:adaptive} can likewise be interpreted as walking the Hasse diagram of $\Pi$, in the more general order-theoretic sense; cf., e.g.,~\cite{hegland2007}. Our algorithm targets more general expansions than just the GMBE, and differs particularly in the screening criterion applied and the update mechanism; cf\@.~\cite[(12), (13)]{broderick2023}.

\begin{algorithm}[t]
  \begin{algorithmic}[1] 
    \State $I, A \gets \emptyset$
    \State $Q \gets \text{an empty priority queue}$
    \State $D, L, E \gets \text{empty (zero) tensors}$
    \State $i \gets -1$
    \Repeat
    \State $i \gets i + 1$
    \If{$i = 0$}
    \State $I_{\text{new}} \gets \pdbrace{\hat{0}_{\Pi}}$    
    \Else
    \State $I_{\text{new}} \gets \text{new elements from entries of $Q$ according to selection strategy}$
    \label{algline:select_elements}
    \EndIf
    \State $I \gets I \cup I_{\text{new}}$
    \ForAll{$\bm{p} \in I_{\text{new}}$}
    \label{algline:new_element_loop}    
    \State $L_{\bm{p}} \gets \mathcal{L}\pdbrack{V_{\bm{p}}}$
    \label{algline:evaluate_potentials}

    \State Evaluate Möbius tensor $M^{\pdparen{\bm{p}}}$.
    \State $D \gets D + M^{\pdparen{\bm{p}}}$
    \Comment{Update the full combination tensor,}
    \State $E \gets E + M^{\pdparen{\bm{p}}}$
    \Comment{Update the error-indicator tensor.}
    \label{algline:update_error_indicator}
    \State Insert $\bm{p}$ into $Q$, keyed by $\norm{\mathcal{L}\pdbrack{\tilde{V}_{\bm{p}}} \pdparen*{= \Call{Reduce}{M^{\pdparen{\bm{p}}} \odot L}}} / \mathcal{C}\pdparen{\bm{p}}$.
    \label{algline:insert_into_queue}
    \EndFor
    
    \LComment{Update generating antichain, and correct the error indicator tensor}    
    \State $A \gets A \cup I_{\text{new}}$
    \label{algline:update_generating_antichain}
    \State $R \gets A \cap \bigcup_{\bm{p} \in I_{\text{new}}} \Set{\bm{q} \in \Pi \given \bm{q} \prec \bm{p}}$
    \State $A \gets A - R$
    \State $E \gets E - \sum_{\bm{p} \in R} M^{\pdparen{\bm{p}}}$
    \label{algline:discount_from_error}

    \LComment{Calculate approximation, error indicator, cost, and uncertainty}
    \State $S_i \gets \Call{Reduce}{D \odot L}$
    \label{algline:calculate_sum}
    \State $\mathcal{E}_i \gets \Call{Reduce}{E \odot L}$
    \label{algline:calculate_error_indicator}
    \State $\mathcal{C}_i \gets \sum_{\bm{p} \in I} \mathcal{C}\pdparen{\bm{p}}$
    \label{algline:calculate_cumulative_abstract_cost}
    \State $\text{d}S_i \gets \sqrt{\Call{Reduce}{\varepsilon^2\pdparen{D \odot D}}}$
    \label{algline:calculate_propagated_uncertainty}
    \Until{$Q$ is empty, or termination criteria are met.}
    
    \State \Return $I$, and also $S_i$, $\mathcal{E}_i$, $\mathcal{C}_i$, and/or $\text{d}S_i$ as required
  \end{algorithmic}
  \caption{Adaptive algorithm for calculation of order ideal $I$ (adapted from~\cite[Alg.~3.3]{barker2024})}\label{alg:adaptive}
\end{algorithm}

We assume here the choice of a linear operator $\mathcal{L}$, e.g., a point evaluation operator as in Section~\ref{sec:fundamentals}. Sequences of order ideals $I^{\pdparen{i}}$ and their corresponding evaluated sums $S_i \coloneq \mathcal{L}\pdbrack{S_{I^{\pdparen{i}}}}$ are generated, starting from the initial values $I^{\pdparen{0}} = \pdbrace{\hat{0}}$ and $S_{0} = \mathcal{L}\pdbrack{\tilde{V}_{\hat{0}}} = \mathcal{L}\pdbrack{V_{\hat{0}}}$. At the $i$th iteration of the algorithm, at least one element $\bm{p} \in I^{\pdparen{i-1}}$ is selected for \emph{expansion}: all those elements $\bm{q} \succ \bm{p}$ such that $I^{\pdparen{i}} \cup \pdbrace{\bm{q}}$ would remain an order ideal are adjoined to $I^{\pdparen{i-1}}$ to form $I^{\pdparen{i}}$. The contribution potentials $\tilde{V}_{\bm{q}}$ for these new element(s) are evaluated, and accumulated into $S_i$.

The choice of element(s) $\bm{p} \in I^{\pdparen{i-1}}$ for expansion is guided by the benefit/cost ratios $\norm{\mathcal{L}\pdbrack{\tilde{V}_{\bm{p}}}} / \mathcal{C}\pdparen{\bm{p}}$, where $\mathcal{C} : \Pi \to \bbR$ is an \emph{abstract cost model} for the evaluation of the subproblem potentials $V_{\bm{p}}$ by $\mathcal{L}$. One possible abstract cost model for point evaluation is outlined in Appendix~\ref{sec:abstract_cost_model}. Under certain assumptions, see~\cite{bungartz2004, chinnamsetty2018} and~\cite[Sec.~3.4]{barker2024}, a truncation constructed by assembling terms in descending order of benefit/cost leads to a quasi-optimal truncation of~\eqref{eq:poset_grid_multilevel}.

Notationally, we assume that the elements of a $d$-dimensional tensor, i.e., a multidimensional array $T$ can be indexed $T_{\bm{m}}$ by multiindices $\bm{m} \in \bbN^d$. For a poset grid $\Pi = P_1 \times \cdots \times P_d$ where each $P_i$ is countable, we assume the existence of a suitable indexing bijection $\Phi: \Pi \to \bbN^d$ and will write by abuse of notation $T_{\bm{p}}$ to mean the element $T_{\Phi\pdparen{\bm{p}}}$. Elementwise additions ($+$) and multiplications ($\odot$) of tensors behave as usual, and the tensor product $T \otimes U$ of a $k$-dimensional $T$ and $l$-dimensional $U$ is $(k+l)$-dimensional and defined elementwise by $\pdparen{T \otimes U}_{\pdparen{m_1, \ldots, m_k, n_1, \ldots, n_l}} \coloneq T_{\pdparen{m_1, \ldots m_k}}U_{\pdparen{n_1, \ldots, n_l}}$. We write $\textsc{Reduce}\pdparen{T} \coloneq \sum_{\bm{m} \in \Pi} T_{\bm{m}}$. This sum is here always finite, since although tensors may be notionally infinite, those considered have finitely many non-zero terms by construction. For the same reason, it is desirable to work in the implementation with sparse tensors~\cite{bader2008, sparse2018}, that is, higher-dimensional analogues of standard sparse matrices. We omit the details here.

It is a deliberate feature of this notation that the combination coefficients $D^{\pdparen{I^{\pdparen{i}}}}_{\bm{p}}$ as per~\eqref{eq:combination_coefficient} coincide identically with the entries of the \emph{combination tensor}
\begin{equation*}
  D^{\pdparen{I^{\pdparen{i}}}} \coloneq \sum_{\bm{p} \in I^{\pdparen{i}}} M^{\pdparen{\bm{p}}},
\end{equation*}
where each $M^{\pdparen{\bm{p}}}$ is the \emph{Möbius tensor} of $\bm{p} \in \Pi$ and is defined elementwise by $M^{\pdparen{\bm{p}}}_{\bm{q}} \coloneq \mu_{\Pi}\pdparen{\bm{q}, \bm{p}}$. As elements $\bm{p} \in \Pi$ are added to the order ideals $I^{\pdparen{i}}$, their evaluations $\mathcal{L}\pdbrack{V_{\bm{p}}}$ are stored in a suitable tensor as $L_{\bm{p}}$. Their Möbius tensors can be used to directly evaluate $\mathcal{L}\pdbrack{\tilde{V}_{\bm{p}}} = \textsc{Reduce}\pdparen{M^{\pdparen{\bm{p}}} \odot L}$, and then accumulated into a running combination tensor $D$. 

Maintaining the sum $S_i$ indirectly via the combination tensor has some appealing numerical characteristics that help to ameliorate issues like those noted in, e.g.,~\cite{richard2014}. A sparsity-exploiting evaluation $S_{i} = \textsc{Reduce}\pdparen{D \odot L}$ can use as few arithmetic operations as possible, whereas a naïve accumulation of evaluated terms $\mathcal{L}\pdbrack{\tilde{V}_{\bm{p}}}$ may suffer over iterations from accrued error due to floating-point arithmetic. Moreover, all \emph{ab initio} methods reduce to the solution of particular systems of equations, which are solved to within some tolerance usually chosen well outside floating-point precision. In~\cite{richard2014}, a propagation-of-errors technique is used to test how these tolerances flow through to the overall uncertainty of a standard $n$-body truncation, rewritten for the purpose in a closed-form expression~\cite[(4.2) and (4.5)]{richard2014}. If we associate to each $\mathcal{L}\pdbrack{V_{\bm{p}}}$ an inherent uncertainty $\varepsilon_{\bm{p}}$, then it is an easy generalization of the same approach to calculate the propagated uncertainty in the sum $S_I$ for any $I$ as
\begin{equation*}
  \text{d}S_I = \sqrt{
    \sum_{\bm{p} \in I} \pdparen*{D^{\pdparen{I}}_{\bm{p}}}^2 \varepsilon^2_{\bm{p}}
  }.
\end{equation*}

The precise encoding of poset grid elements $\bm{p} \in \Pi$ is an implementation detail. The algorithm requires only that all covered elements $\bm{q} \prec \bm{p}$ can be enumerated, and similarly all covering elements $\bm{r} \succ \bm{p}$. It suffices if the same is possible for any element of an individual axis $p \in P_i$. This is easy in the cases of $\pdbrack{n}$, $\bbN$, and $B_n$. A little more work is required for $\conn\pdbrack{G}$ and particularly $\mathcal{M}_g\pdbrack{G}$; see~\cite[App.~2]{barker2024} for details and supporting references. The values $\mu_\Pi\pdparen{\bm{q}, \bm{p}}$ and thus $M^{\pdparen{\bm{p}}}$ can then always be evaluated explicitly at least as per~\eqref{eq:moebius_function}, which is amenable to memoization. Particularly when expressions like~\eqref{eq:chain_poset_moebius} and~\eqref{eq:boolean_algebra_moebius_fn} are available, it is more convenient to apply~\eqref{eq:product_thm} and write
\begin{equation*}
  M^{\pdparen{\bm{p} = \pdparen{p_1, \ldots, p_d}}} = \bigotimes_{i=1}^d m^{\pdparen{p_i}},
\end{equation*}
using Möbius ``vectors'' defined elementwise by $m^{\pdparen{p_i}}_{s} \coloneq \mu_{P_i}\pdparen{s, p_i}$ for $s, p_i \in P_i$. For example, if $P_i$ is a chain poset, then constructing $m^{\pdparen{p_i}}$ requires no more than two elementwise updates to an initially-zero sparse vector.

\begin{figure}[t]
  {\centering
    \includegraphics{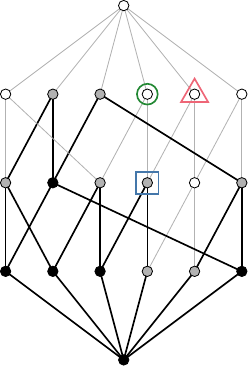}

  }
  \caption{A possible state of Algorithm~\ref{alg:adaptive} at the beginning of an iteration, during an adaptive calculation for the poset displayed in Figure~\ref{fig:convex_subgraphs_subposet}, i.e.,  $\mathcal{M}_g\pdbrack{G}$ for the benzene/augmented hexane graph in Figure~\ref{fig:benzene_hexane_graphs}. This particular state is hypothetical, and constructed for illustration. The order ideal constructed by previous iterations consists of all shaded vertices. Vertices shaded gray indicate elements in the priority queue. Vertices shaded black have already been fully expanded. Suppose that the element indicated by a blue square were considered for expansion. Only its covering element indicated by a green circle is admissible; its other covering element (red triangle) is inadmissible, since it covers itself another element that is not in the order ideal.}
  \label{fig:adaptive_algorithm_example}
\end{figure}

We maintain throughout the course of the algorithm a priority queue, see~\cite{cormen2022} and cf\@.~\cite{gerstner2003}, of all elements $\bm{p} \in \Pi$ which have been previously added to $I$ and not yet fully expanded, in the sense that not all elements $\bm{q} \succ \bm{p}$ have also been added to $I$. The queue is kept in descending order of $\norm{\mathcal{L}\pdbrack{\tilde{V}_{\bm{p}}}} / \mathcal{C}\pdparen{\bm{p}}$. At line~\ref{algline:select_elements}, elements $\bm{p}$ are removed one-by-one from the front of the queue, and their covering elements $\bm{r} \succ \bm{p}$ are tested for \emph{admissibility}, which holds whenever all elements $\bm{p}' \prec \bm{r}$ are also already in $I$. Any such admissible $\bm{r}$ joins the overall set of new elements $I_{\text{new}}$, such that $I^{\pdparen{i}} \leftarrow I^{\pdparen{i-1}} \cup I_{\text{new}}$. A $\bm{p}$ with at least one admissible $\bm{r} \succ \bm{p}$ is \emph{expandable}. An example showing the difference between admissible and inadmissible covering elements of an expandable $\bm{p}$ is given in Figure~\ref{fig:adaptive_algorithm_example}.

Different strategies can be applied to guide the number of elements expanded at each step. One might simply stop after expanding the first, \textsc{Best} expandable element in the queue, or instead consider \textsc{All} expandable elements in the queue. A more flexible strategy is to continue through the queue, expanding any elements whose benefit/cost ratio is within some \textsc{Threshold} factor $0 \leq \alpha \leq 1$ of that of the best. Whichever strategy is used, any dequeued element found to have an inadmissible $\bm{r} \succ \bm{p}$ should be requeued.

Once the new elements $\bm{p} \in I_{\text{new}}$ have been identified, each corresponding $\mathcal{L}\pdbrack{V_{\bm{p}}}$, $M^{\pdparen{\bm{p}}}$, $\mathcal{L}\pdbrack{\tilde{V}_{\bm{p}}}$, and $\mathcal{C}\pdparen{\bm{p}}$ must be evaluated, the combination tensor $D^{\pdparen{I}}_{\bm{p}}$ updated, and $\bm{p}$ inserted into the queue. This occurs in the loop at line~\ref{algline:new_element_loop}, along with one additional step which will be explained below. Parallelism in this loop should be exploited as much as possible, since in practice, the evaluations $\mathcal{L}\pdbrack{V_{\bm{p}}}$ are by far the most costly part of the algorithm.

The algorithm terminates once either the queue has been exhausted, or some termination criterion has been met. One might place a cumulative limit on the abstract costs $\mathcal{C}\pdparen{\bm{p}}$, but it is more useful to have an estimating \emph{error indicator}, see and cf., e.g.,~\cite{gerstner2003, bungartz2004}. For this, we suggest
\begin{equation}\label{eq:error_indicator}
  \mathcal{E}_i = \sum_{\bm{p} \in A^{\pdparen{i}}} \mathcal{L}\pdbrack{\tilde{V}_{\bm{p}}},  
\end{equation}
where $A^{\pdparen{i}}$ is the set of maximal elements of $I^{\pdparen{i}}$. For efficiency, the adaptive algorithm also maintains and updates this set. The key observation is that any new $\bm{p}$ added to $I^{\pdparen{i}}$ must be a maximal element of the same, and that any elements $\Set{\bm{q} \in I^{\pdparen{i-1}} \given \bm{q} \prec \bm{p}}$ are correspondingly precluded from maximality in $I^{\pdparen{i}}$. Again, we calculate~\eqref{eq:error_indicator} not as a sum of contribution terms, but rather maintain a set of combination coefficients analogous to those for the full sum (lines~\ref{algline:update_error_indicator} and~\ref{algline:update_generating_antichain}--\ref{algline:discount_from_error}).

\section{Experiments and Discussion}\label{sec:experiments}

As an initial evaluation of the techniques outlined above, we consider a collection of experimental results obtained by adaptively growing truncated sums of the form~\eqref{eq:poset_grid_multilevel}. These results are a subset of those covered in~\cite{barker2024}, and were generated using an implementation of the more general version of Algorithm~\ref{alg:adaptive} given there.

A detailed discussion of implementation details is not necessary to interpret these results, and is left to~\cite{barker2024}. Similarly out of scope is a discussion of the computational treatments of the various involved posets; for this, see particularly Appendix B of that source. Briefly, however, the implementation made use of standard scientific libraries~\cite{harris2020, virtanen2020}, particularly NetworkX~\cite{hagberg2008} for working with graphs. Some reference was made to the Sparse library for sparse arrays~\cite{sparse2018}. In general, tensor reductions were performed using interval arithmetic as implemented in the Arb library~\cite{johansson2017}, using 100 bits of precision, but we believe this had no significant impact on results compared to the use of standard double-precision floating-point arithmetic. Some of the data in Figure~\ref{fig:heptane_results} were regenerated to correct an error in~\cite{barker2024}, and here, standard double-precision was used. Plots and visualizations are colored using the ``bright'' scheme of~\cite{tol2021}, while molecular visualizations were rendered in Blender using the DSO shading scheme in~\cite{hansen2020}.

Results given here were obtained using HF~\cite{szabo1996}, MP2~\cite{moeller1934}, CCSD~\cite{cizek1966, purvis1982}, and CCSD(T)~\cite{raghavachari1989} calculations; KS-DFT/B3LYP~\cite{stephens1994} was used for geometry optimizations. We used variously the following basis sets: STO-3G\cite{hehre1969, hehre1970}, cc-pVDZ~\cite{dunning1989, woon1993}, cc-pVTZ, cc-pCV$n$Z for $2 \leq n \leq 8$~\cite{dunning1989, woon1995, wilson1996, peterson1997, feller1999, mielke1999, feller2000, feller2007, feller2008, feller2009, feller2010, thorpe2021}, and also all necessary basis sets for G4(MP2)~\cite{ditchfield1971, hehre1972, hariharan1973, curtiss1998, curtiss2007, curtiss2007a}. Most basis sets were obtained from the Basis Set Exchange~\cite{pritchard2019}, and processed for computational efficiency using the BSE Python library. For particularly fine details on basis sets and, e.g., detailed solver settings, see again~\cite{barker2024}.

Calculations required for subproblem potential evaluation were performed using PySCF~\cite{sun2015, sun2018, sun2020}, to an iterative convergence threshold set at \qty{1e-08}{\hartree}, with a suitably tight integral prescreening threshold. Full-system geometry optimizations, as well as one reference calculation, were performed using NWChem~\cite{apra2020}. Monatomic energies used to calculate atomization energies, see below, were mostly obtained using MRCC~\cite{kallay2020, mrcc, rolik2013, kallay2014, gyevinagy2020}, with HF and CC convergence thresholds at \qty{1e-10}{\hartree}; those for oxygen with aug-cc-pCV8Z were obtained using PySCF, converged to \qty{1e-9}{\hartree}. An implicit assumption here is that single-point results for identical geometries and levels of theory but carefully obtained under different solver packages will be reliably equivalent up to convergence thresholds.

In the following, we are primarily interested in observing changes in the accuracy of the sums at each iteration against their total abstract costs of calculation $\mathcal{C}_i$, as per line~\ref{algline:calculate_cumulative_abstract_cost} of Algorithm~\ref{alg:adaptive} and the abstract cost model outlined in Appendix~\ref{sec:abstract_cost_model}. With one exception, discussed below, the final plots given here were generated by reusing previously-cached subproblem potential evaluations. Since these plots consider only abstract cost, this reuse has no impact on the results as shown.

\subsection{Convex SUPANOVA decomposition}\label{sec:graph_experiments}

Before considering the full multilevel setup, we first investigate the suitability of the convex SUPANOVA correction to the non-multilevel BOSSANOVA method. Since we already know from, e.g.,~\cite{heber2014, griebel2014} that the conventional BOSSANOVA approach is well capable of handling smaller chain-like molecules, we investigate two examples of larger molecular systems with characteristics that would be problematic in light of BOSSANOVA's issues with combination-consistency.

\begin{figure}[t]
  {\centering
    \includegraphics[width=0.8\textwidth]{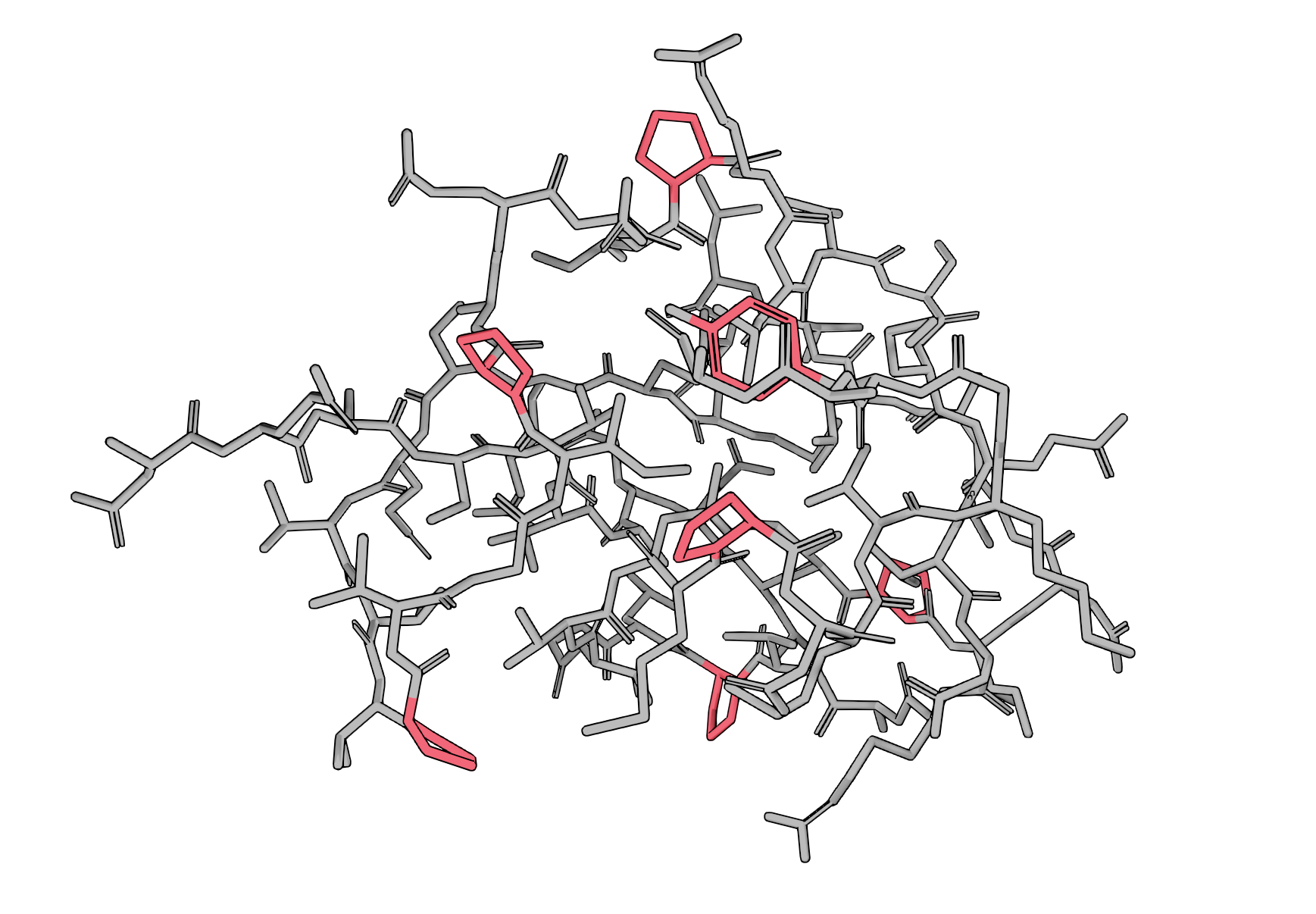}
    
  }
  \caption{Stick-model visualization of the covalent bond structure of the antifreeze protein 1KDF~\cite{soennichsen1996, soennichsen2021}, after preprocessing as described in the main text and with hydrogen atoms excluded. Ring substructures are highlighted in red.}
  \label{fig:1kdf_visualization}
\end{figure}

We begin with a model of an \emph{antifreeze protein}~\cite{soennichsen1996}, obtained from the Protein Data Bank~\cite[PDB key: 1KDF]{soennichsen2021}. As for most proteins, and as indicated in the visualization in Figure~\ref{fig:1kdf_visualization}, the covalent bond structure of 1KDF contains several rings. The 1KDF model was preprocessed using Open Babel~\cite{oboyle2011}, with hydrogens added explicitly. The resulting system contained a total of \num{991} atoms, \num{479} non-hydrogen. Using NWChem, via a calculation initialized by basis set projection from an initial HF/STO-3G calculation and then converged to \qty{1e-9}{\hartree}, we obtained a reference HF/cc-pVTZ total energy of the 1KDF system as approximately \qty[round-mode=places, round-precision=3]{-24896.350029}{\hartree}.

That some interaction graphs may not accurately represent the spatial topology of a molecule is a known issue in graph-based fragmentation methods~\cite{seeber2023}. The shape of 1KDF suggests that strong interactions should be expected between collections of particles that are close in space, but not close in the standard bond graph $G$.  The impacts of those interactions would then only appear in contribution terms $\tilde{V}_{\bm{u}}$ corresponding to quite large convex subgraphs $G\pdbrack{\bm{u}}$ which also include at least all those particles along a shortest path between the two originally considered. As mitigation, we employed instead an interaction graph $G$ with a distance-thresholded edge set $E = \Set{\pdbrace{i, j} \subseteq \pdbrack{M} \given i \neq j, \|R_i - R_j\| \leq r_{\text{cut}}}$. This is a common approach in fragmentation methods, either explicitly or implicitly; see and cf\@., e.g.,~\cite{ricard2020, zhang2021, seeber2023}, as well as many other examples in the reviews cited above. Here, we used a cutoff radius of $r_{\text{cut}} = \qty{2.5}{\angstrom}$, which should be sufficient to capture at least any hydrogen bonds~\cite{harris1999} in addition to all covalent bonds.

In the convex SUPANOVA decomposition~\eqref{eq:supanova}, we use notional subproblem potentials $V_{\bm{u}}$ such that an evaluated $\mathcal{L}\pdbrack{V_{\bm{u}}}$ is the HF/cc-pVTZ total energy of the subsystem indexed by $\bm{u}$, calculated using PySCF. Dangling covalent bonds from subsystems were treated with standard hydrogen link atoms, placed using covalent radii~\cite{cordero2008} consistent with, e.g.,~\cite[(19)]{richard2012}. Rather than a nuclear decomposition, we used a fragment decomposition. The choice of fragments was intended to avoid two practical issues: firstly, the question of how to terminate a dangling double bond, and second, the possibility that two link atoms would be placed in close spatial proximity after subsystem excision, thus potentially biasing the relevant $\tilde{V}_{\bm{u}}$; on the latter, cf\@., e.g.,~\cite{collins2006, seeber2023}. To this end, we refined an initial fragmentation $F = \pdbrace{\pdbrace{i}}_{i=1}^M$ of $\pdbrack{M}$ using a simple heuristic algorithm, the details of which are given in Appendix~\ref{sec:fragmentation_algorithm}. Technically, this means that the decomposition is not in terms of $\mathcal{M}_g\pdbrack{G}$, but rather $\mathcal{M}_g\pdbrack{G'}$, where $G' = G / F$ is the \emph{quotient graph}~\cite{george1978} $G' = \pdparen{F, E'}$ with edge set $E' = \Set{\pdbrace{F_{i'}, F_{j'}} \given \text{$i' \neq j'$, $\exists \pdbrace{i, j} \in E$ with $i \in F_{i'}, j \in F_{j'}$}}$ derived from the original edge set $E$ of $G$.

\begin{figure}[t]
  {\centering
    \includegraphics{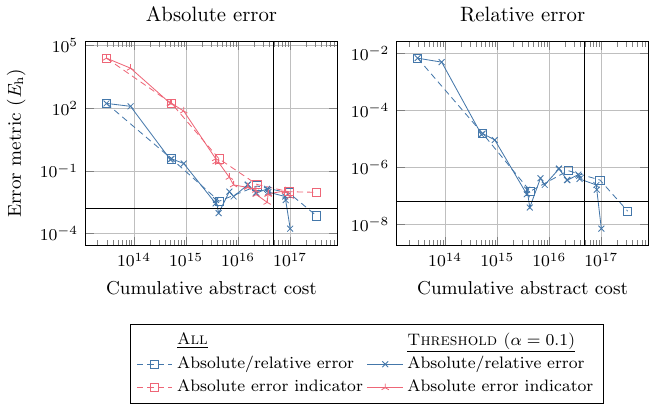}
    
  }
  \caption{Per-iteration error metrics for adaptive truncations of a convex SUPANOVA decomposition for the 1KDF antifreeze protein, obtained using \textsc{All} and \textsc{Threshold} selection strategies. The horizontal black line in each plot indicates chemical accuracy, \qty{1}{\kilo\cal\per\mole}. The vertical black line indicates the abstract cost of a full-system reference HF/cc-pVTZ calculation.}
  \label{fig:1kdf_results}
\end{figure}

Per-iteration results are plotted in Figure~\ref{fig:1kdf_results} for two adaptively-grown truncations of the convex SUPANOVA decomposition. One truncation was refined using the \textsc{All} selection strategy, and the other according to the \textsc{Threshold} strategy with $\alpha = \num{0.1}$.  Both calculations were terminated once the total, cumulative abstract cost expended passed the abstract cost of the reference calculation by a factor of two. The left-hand plot shows both the progression of the error indicators $\mathcal{E}_i$ at each iteration, as well as the true absolute error of the resulting value $S_i$ as measured against the reference total energy. The right-hand plot shows the per-iteration relative errors for the same calculations. The solid black vertical and horizontal lines on the plots indicate the abstract cost of the reference calculation, and the $\qty{1}{\kilo\cal\per\mole}$ threshold of chemical accuracy relative to reference, respectively. Since the first iteration of the algorithm produces in both cases only $S_0 = \mathcal{L}\pdbrack{V_{\emptyset}} = 0$ for zero abstract cost, these data points are not explicitly shown, nor will they be in similar plots to follow.

The absolute errors of the produced sums $S_i$ under both \textsc{All} and \textsc{Threshold} selection strategies decrease rapidly in early iterations, stabilizing around \qty{1e-02}{\hartree}. A further improvement is suggested shortly after the abstract costs of the iterative calculations pass that of the reference calculation. Excepting one early \textsc{Threshold} iteration, chemical accuracy relative to the reference value is only achieved in the final iteration. From a performance perspective, we would ideally hope to achieve chemical accuracy at a significantly reduced cost relative to the reference calculation. Although we do not achieve this, it should be kept in mind that, as plot (b) highlights, a consistent relative accuracy of at least six significant figures is achieved with approximately an order of magnitude speedup relative to reference. For larger systems again, such reliable relative accuracy is probably a more important consideration than true chemical accuracy, especially for total energy calculations.

As expected, there is no indication here of the inherent errors that would be expected in higher-order truncations of the standard BOSSANOVA expansion due to the presence of chordless cycles in $G'$. It is also positive to see that the error indicators $\mathcal{E}_i$ are generally reliable. If anything, they are a little conservative: the true error is sometimes significantly overestimated, but only rarely underestimated, and even then not by more than an order of magnitude. Further, although not plotted explicitly in Figure~\ref{fig:1kdf_results}, the propagated uncertainties $\text{d}S_i$ in these calculations hovered generally around $\qty{1e-06}{\hartree}$, and never exceeded $\qty{1e-05}{\hartree}$, assuming throughout a per-evaluation uncertainty of $\varepsilon = \qty{1e-08}{\hartree}$ consistent with involved convergence thresholds. This suggests that the unfavorable error propagation observed in~\cite{richard2014} for $n$-body truncations of the MBE is not in play here. It seems unlikely that this is due to any particular combinatorial property of the decomposition itself, but rather only that most terms $V_{\bm{u}}$ in the underlying MBE have zero combination coefficients; this would be consistent with, e.g.,~\cite{richard2014, lao2016}.

\begin{figure}[t]
  {\centering
    \includegraphics[width=0.5\textwidth]{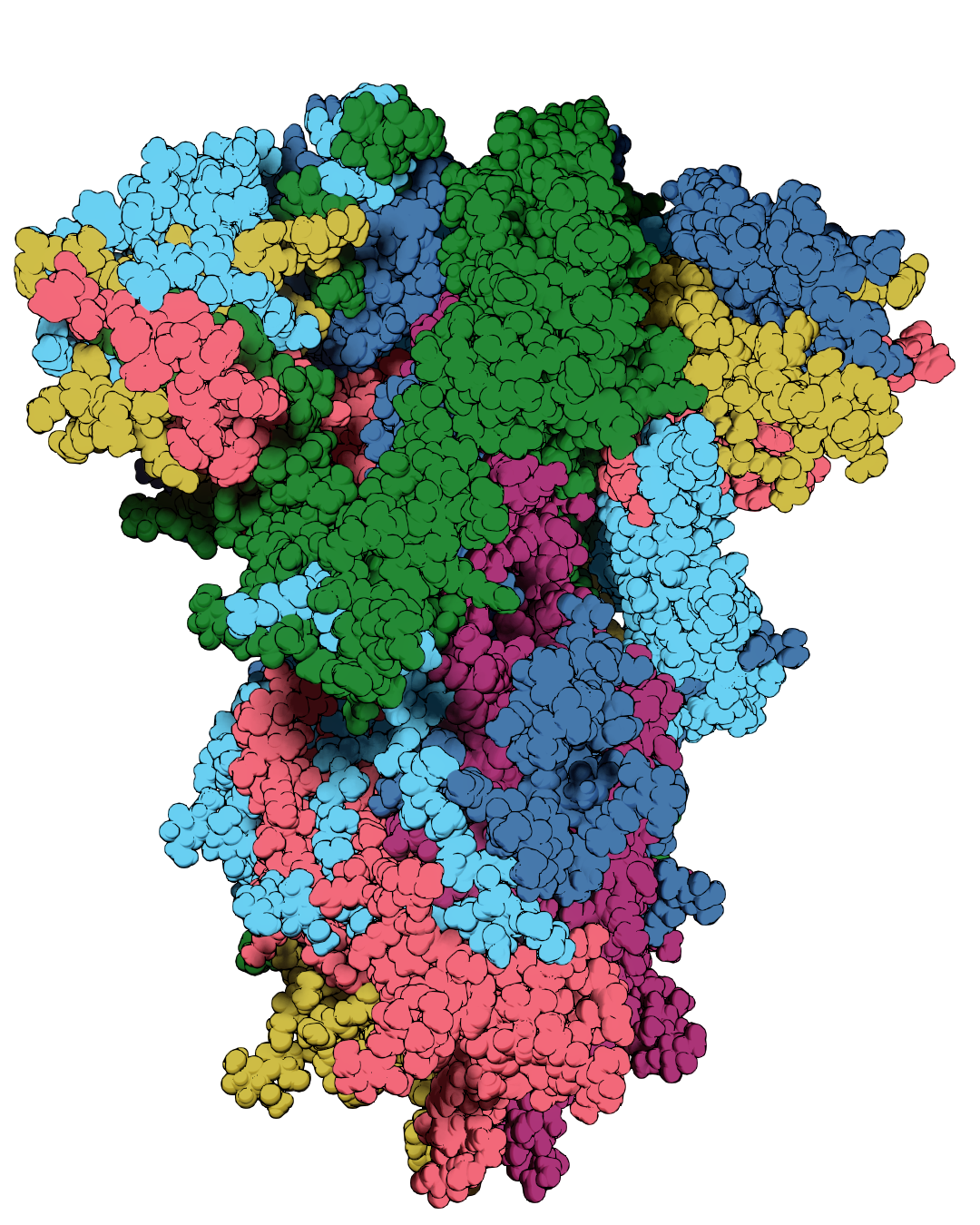}
    
  }
  \caption{Space-filling visualization of the SARS-CoV-2 spike glycoprotein~\cite[PDB key: 6VXX]{walls2020, walls2021}. Atoms are drawn as spheres of appropriate van der Waals radii~\cite{mantina2009}. Colors indicate membership of connected components of the full-system bond graph, chosen using a graph-coloring algorithm provided by NetworkX.}
  \label{fig:spike_glycoprotein_visualization}
\end{figure}

As a general feasibility test of the convex SUPANOVA approach, we performed a similar calculation on a much larger sample system, specifically, a model of the spike glycoprotein of the SARS-CoV-2 virus~\cite{walls2020}, again drawn from the Protein Data Bank~\cite[PDB key: 6VXX]{walls2021}. Preprocessing and hydrogenation with Open Babel produced here a system made up of 27 non-covalently bonded subsystems, each containing between \num{63} and \num{2690} non-hydrogen atoms, for a total of \num{46923} atoms. A space-filling visualization of the 6VXX system is given in Figure~\ref{fig:spike_glycoprotein_visualization}. We used the same methodology as above to choose a nuclear interaction graph $G$ ($r_{\text{cut}} = \qty{2.5}{\angstrom}$), a refined fragmentation $F$, and a final quotient graph $G' = G / F$. This $F$ included \num{7524} fragments, each containing up to \num{17} distinct atoms.

We performed a single adaptive calculation, using HF/cc-pVTZ subproblem potentials as above, and applying the \textsc{Threshold} strategy with $\alpha = 0.5$. The 6VXX system is of course far too large to allow a conventional reference calculation at any level of quantum theory. Indeed, we were not even able to evaluate an abstract cost for the complete system, since our abstract cost model involves the explicit calculation of some $\mathcal{O}\pdparen{N_{\text{AO}}^2}$ two-electron integrals; see Appendix~\ref{sec:abstract_cost_model}. Thus, we set no explicit termination criteria, and simply allowed the calculation to run for approximately \qty{46}{\hour}. At each iteration, subproblem potential evaluations were distributed in a hybrid fashion across up to \num{4864} cores of a compute cluster.

\begin{figure}[t]
  {\centering
    \includegraphics{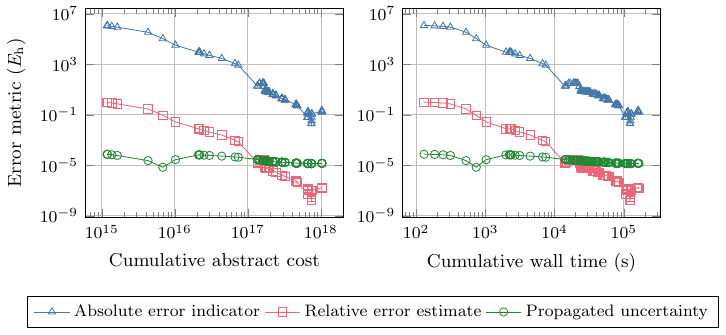}
    
  }
  \caption{Per-iteration error metrics for adaptive convex SUPANOVA approximation of the HF/cc-pVTZ total energy of the SARS-CoV-2 spike glycoprotein (PDB key: 6VXX)~\cite{walls2020, walls2021}.}
  \label{fig:spike_glycoprotein_results}
\end{figure}

Results for this calculation are shown in Figure~\ref{fig:spike_glycoprotein_results}. In the absence of a reference result, we can only consider the per-iteration error indicators. The left plot shows the absolute values of these as a function of cumulative abstract cost, also an estimate of the relative error calculated as $\abs{\mathcal{E}_i / S_i}$, and the propagated uncertainty of the calculation assuming per-subproblem thresholds of $\varepsilon = \qty{1e-08}{\hartree}$. The right-hand pane plots the same data, but measured as a function of the true cumulative wall-clock time required for the calculation. The similarity of the two plots supports the ``reasonableness'' of the abstract cost model we have used throughout, at least for HF. Some deviance here can be attributed to unavoidable parallel inefficiency.

The error indicators exhibit a clear and fairly smooth rate of decay. At termination, they suggest the calculation has approximated the total energy of the spike glycoprotein to again at least six significant figures of accuracy. Naturally, this conclusion depends entirely on the reliability of the error indicator. Although this is certainly suggested by results in the smaller 1KDF case, we do mention that we discussed cases in~\cite{barker2024} where the error indicator underperformed for reasons related to unrepresentative interaction graphs, and so further and more detailed validation would be in order before this could be relied upon. What can be said, however, is that the propagated uncertainty of this calculation remains basically flat throughout, and in particular, always stay several orders of magnitude less than the absolute value of the error indicator. So the numerics of the truncated sum are sound.

This test provides initial support to the idea that the convex SUPANOVA approach can be practically applied to extremely large molecules. It is to be stressed, however, that this is a preliminary calculation only, and further benchmarking of the underlying model is most certainly required. One potential target for such benchmarking is provided by an FMO calculation performed on the same protein~\cite{akisawa2021}. That calculation was much more methodologically sophisticated, and included in particular a high-quality treatment of correlation energy, so a direct comparison is not appropriate without further work. We do note that the computational resources applied in that study were substantially greater than ours, and it would be very interesting to investigate whether any computational advantage remained once all differentiating factors were properly taken into account.

\subsection{ML-SUPANOVA decompositions}\label{sec:ml_experiments}

When evaluating experimental results obtained by truncating a decomposition like~\eqref{eq:ml_supanova_expansion_explicit} of the true Born-Oppenheimer potential $V^{\text{BO}}$, we are impeded by the fact that there is no non-trivial molecular system for which a true reference solution can be obtained. We consider here only much smaller test systems than in the previous section. For these, we can obtain reasonable-quality approximate solutions, and these can function at least as rough references. Specifically, we investigate calculations on the linear alkane heptane (\ch{C7H16}), and the heterocyclic molecule limonin (\ch{C26H30O8}). Chemical structures for these systems were obtained from the ChemSpider database~\cite{csHeptane, csLimonin}, and then relaxed into plausible equilibrium conformations using a KS-DFT/B3LYP/cc-pVDZ geometry optimization under NWChem, using the \textsc{simint} ERI calculation library~\cite{pritchard2016}.

The poset grid under theoretical consideration is $\Pi = \mathcal{M}_g\pdbrack{G} \times \pdbrack{2N-1} \times \bbN$, as outlined above, but see the practical restriction discussed below. It is also possible to construct decompositions in terms of ``subgrids'' of this poset grid. These can be viewed as holding one or multiple of the parameters $\bm{u}$, $m$, and/or $p$ for potentials $V_{\pdparen{\bm{u}, m, p}}$ fixed, and doing so allows us to investigate the utility of the various axes. We consider the following subgrids of $\Pi$:

\begin{itemize}
\item $\Pi_{\text{non-ML}} = \mathcal{M}_g\pdbrack{G}$, with subproblem potentials fixed at a particular level of theory indexed by $\pdparen{m, p}$. This is a non-multilevel approximation. In cases where $\mathcal{M}_g\pdbrack{G} = \conn\pdbrack{G}$, this is a BOSSANOVA decomposition; where not, a more general convex SUPANOVA one.
\item $\Pi_{\text{GCM}} = \pdbrack{2N-1} \times \bbN$. All subproblem potentials are full-system potentials $V_{m,p}$, that is, with $\bm{u} = \pdbrack{M}$. Consistent with~\cite{barker2024}, where the idea was considered by extension of~\cite{zaspel2019}, we refer to this as a GCM (\emph{generalized composite method}) grid.
\item $\Pi_{\text{AbI}} = \mathcal{M}_g\pdbrack{G} \times \bbN$, with subproblem potentials fixed at a particular \emph{ab initio} method indexed by $m$, e.g., $m = 4$ for CCSD(T). As above, for cases where $\mathcal{M}_g\pdbrack{G} = \conn\pdbrack{G}$, this is an ML-BOSSANOVA decomposition.
\item $\Pi_{\text{BS}} = \mathcal{M}_g\pdbrack{G} \times \pdbrack{2N-1}$, with subproblem potentials using a fixed basis set indexed by $p$, e.g., $p = 2$ for cc-pCVTZ.
\end{itemize}

For each test case, we calculated a reference total energy using a single-point calculation via PySCF, at as high a level of theory as practically possible. We also used an alternative mechanism to derive a reference result, specifically, an implementation of the G4(MP2) method~\cite{curtiss2007a}, written in Python with some reference to~\cite{ernst2016}. We omitted here the initial DFT-based geometry optimization technically required by G4(MP2). In general, the component calculations required were also performed using PySCF, using monatomic energies generally calculated using MRCC, as described above.

Since G4(MP2) makes the frozen-core approximation, it is inherently unsuited to the calculation of accurate total energies, but might be reliable to around chemical accuracy~\cite{karton2017, das2021} for the calculation of energy differences such as atomization energies. An atomization energy potential can be constructed from the Born-Oppenheimer potential as
\begin{equation}
  V^{\text{atom}}\pdparen{X_1, \ldots, X_M}
  \coloneq \pdbrack*{\sum_{A=1}^M V^{\text{BO}}\pdparen{X_A}} - V^{\text{BO}}\pdparen{X_1, \ldots, X_M},
\end{equation}
cf., e.g.,~\cite{helgaker2000}. Similar potentials can also be derived from approximations to the Born-Oppenheimer potential like any of the $V_{\pdparen{\bm{u}, m, p}}$ terms implicitly involved in~\eqref{eq:ml_supanova_expansion_explicit}, taking care to match the level of theory $\pdparen{m, p}$ for the monatomic calculations. Since monatomic energies $V_{m,p}\pdparen{X_A}$ can be precalculated and reused, we assume that the abstract costs of such potentials are precisely the same as the standard total-energy versions.

For practical reasons, we restricted the poset axes $\pdbrack{2N-1}$ and $\bbN$ for \emph{ab initio} theory and basis set. For atomization energy calculations, we restricted the \emph{ab initio} theory axis to contain four elements corresponding to HF, MP2, CCSD, and CCSD(T), and the basis set axis to contain seven elements, corresponding to cc-pCV$n$Z for $n = 2$ (cc-pCVDZ) through 8 (cc-pCV8Z). For total energy calculations, we further restricted these axes according to the level of theory used to derive the reference result. While we would anticipate larger axes to produce more accurate approximations of $V^{\text{BO}}$, any additional accuracy would confound a comparison against the reference result. For atomization energy calculations, we do not consider the subgrids $\Pi_{\text{non-ML}}$ or $\Pi_{\text{BS}}$; $\Pi_{\text{AbI}}$ is, however, considered, fixed for CCSD(T).

\begin{figure}[t]
  {\centering
    \includegraphics{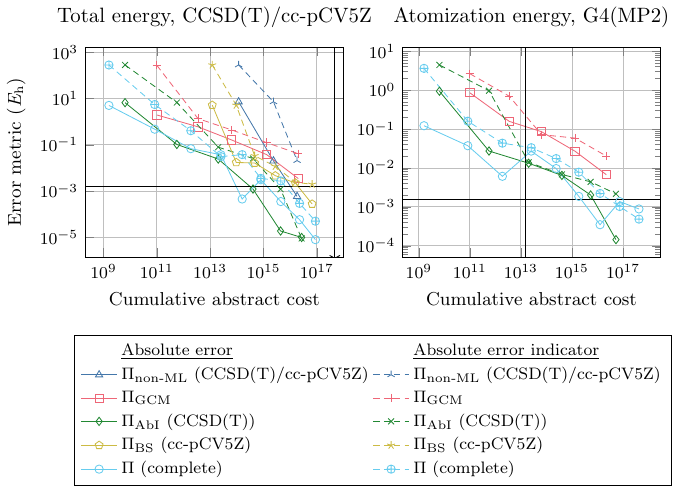}
    
  }
  \caption{Per-iteration error metrics for adaptive truncations of the ML-SUPANOVA decomposition for heptane, and for subgrid decompositions.}
  \label{fig:heptane_results}
\end{figure}

The reference all-electron CCSD(T)/cc-pCV5Z total energy of heptane was approximately \qty[round-mode=places, round-precision=3]{-276.369764}{\hartree}, and the atomization energy of heptane as per G4(MP2) was approximately \qty[round-mode=places, round-precision=3]{3.480747}{\hartree}. We used here only the standard covalent bond graph $G'$. The corresponding fragmentation $F$ of the nuclear indices, with each fragment $F_i$ containing the indices of exactly one backbone carbon atom and also those of any hydrogen atoms bonded to it, is both as delivered by the heuristic algorithm, and precisely consistent with that suggested for the original BOSSANOVA decomposition. We applied here only the \textsc{All} selection strategy.\footnote{Note that for the \textsc{Threshold} and \textsc{Best} strategies, some minor adjustment of Algorithm~\ref{alg:adaptive} would be required to handle the fact that, e.g., $\tilde{V}_{\pdparen{\emptyset,m,p}} = 0$.} Plots of per-iteration error metrics for adaptive truncations of the complete poset grid $\Pi$ and of the variously-considered subgrids listed above are given in Figure~\ref{fig:heptane_results}. The left-hand plot shows accuracy results for total energy calculations, as measured against the CCSD(T)/cc-pCV5Z reference, and corresponding error indicators. The right-hand plot is equivalent, but for atomization energy calculations, measured against the G4(MP2) reference result. We correct in this plot an error in~\cite[Fig.~7.1]{barker2024}.\footnote{In~\cite{barker2024}, the absolute error values plotted were measured relative, not to the G4(MP2) atomization energy of heptane as stated, but rather the ccCA-PS3~\cite{deyonker2009} atomization energy of heptane, see~\cite[Tab.~7.2]{barker2024}. The error behaviour shown in the original~\cite[Fig.~7.1]{barker2024} seems, in this light, consistent with the reasoning that led to the choice of the G4(MP2) value as reference, rather than the ccCA-PS3 atomization energy.}

For the total energy results, we see, in short, that all considered sequences of truncations lead (with the narrow exception of the GCM result) to chemically-accurate approximations of the reference result, and at speedups of at least an order of magnitude. It is interesting to observe that the generally best performance is delivered by the subgrid fixed at CCSD(T); since $G$ is a simple chain, here $\mathcal{M}_g\pdbrack{G} = \conn\pdbrack{G}$, and this is precisely an ML-BOSSANOVA expansion. The performance here is any case comparable with truncations for the complete poset grid $\Pi$, while the fixed-basis $\Pi_{\text{BS}}$ grid performs less well. The error indicators for all calculations are generally well-behaved, although sometimes prone to overestimation of the true error, particularly for the $\Pi_{\text{non-ML}}$ and $\Pi_{\text{AbI}}$ cases.

For the calculation of atomization energies, no cost benefit is obtained relative to the reference calculation. We do not read much into this: heptane is still only a small system, and it is unsurprising that the CCSD and CCSD(T) calculations involved in the ML-SUPANOVA truncation are more expensive than the up-to-MP2 calculations required for G4(MP2). Again, the per-iteration absolute errors for the three tested grids eventually trend progressively downwards, although the results for the complete grid $\Pi$ seem to spend considerable time in a pre-asymptotic regime. Truncations for both $\Pi$ and $\Pi_{\text{AbI}}$ consistently outperform those for $\Pi_{\text{GCM}}$ at similar cost, and both come to approximate the G4(MP2) atomization energy to within chemical accuracy. A slight advantage is noticeable here for the complete grid $\Pi$, and the final iterations for that grid suggest a slightly faster rate of ``convergence'' relative to those for $\Pi_{\text{AbI}}$. The absolute error indicators for all grids are generally reliable, tracking the true absolute error closely. When considering the  absolute error and the absolute error indicator for $\Pi$ in the final iterations, we should remember that the G4(MP2) result may itself not be reliable much past the chemical accuracy cutoff. It remains therefore possible that the error indicator is valid and the true FCI/CBS atomization energy of heptane is indeed being approximated more and more closely. This cannot be the case for $\Pi_{\text{AbI}}$, which can at best approximate the CCSD(T)/CBS atomization energy.

\begin{figure}[t]
  {\centering
    \includegraphics[width=0.5\textwidth]{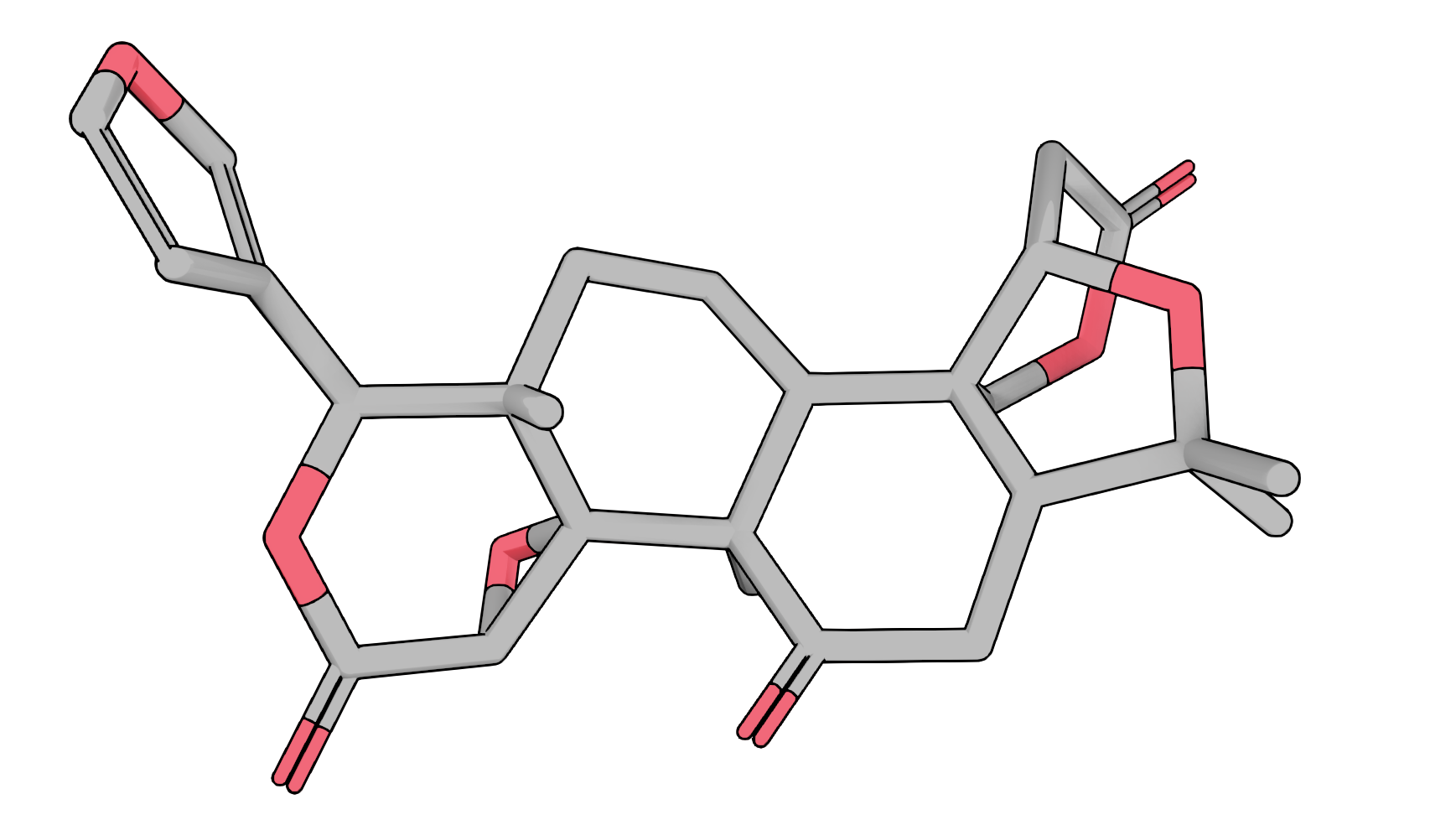}
    
  }
  \caption{Stick-model visualization of limonin (\ch{C26H30O8}), after KS-DFT/B3LYP/cc-pVDZ geometry optimization. Sticks indicate single and double covalent bonds between non-hydrogen atoms; hydrogen atoms themselves, and bonds to them, are not shown.}
  \label{fig:limonin_visualization}
\end{figure}

The second system we consider, limonin, is of interest particularly due to the highly cyclic nature of its covalent bond graph $G'$, clearly visible in the visualization in Figure~\ref{fig:limonin_visualization}. Cyclic structures remain after fragmentation with the heuristic algorithm: the resulting quotient graph $G' = G / F$ contains five chordless cycles of length at least five. This indicates that the standard ML-BOSSANOVA decomposition using $\conn\pdbrack{G}$ cannot succeed, and the use of $\mathcal{M}_g\pdbrack{G}$ is more appropriate; although we do not give details here for reasons of space, this was established for the single-level BOSSANOVA case in~\cite[Sec.~6.6]{barker2024}. It was infeasible to calculate a reference total energy for limonin using CCSD(T) at an appropriately high basis set. Instead, we consider a reference total energy for limonin calculated at the MP2/cc-pCVQZ level of theory: approximately \qty[round-mode=places, round-precision=3]{-1609.907486}{\hartree}. The atomization energy of limonin as per G4(MP2) was calculated as approximately \qty[round-mode=places, round-precision=3]{11.183448}{\hartree}.

\begin{figure}[t]
  {\centering
    \includegraphics{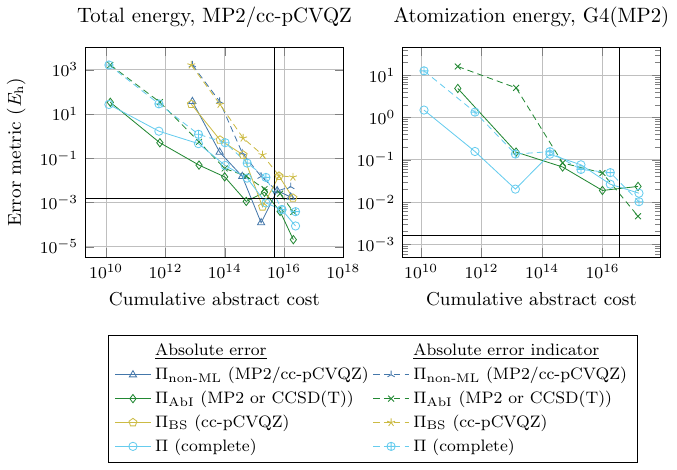}
    
  }
  \caption{Per-iteration error metrics for adaptive truncations of~\eqref{eq:ml_supanova_expansion_explicit} for limonin, as well as subgrid decompositions.}
  \label{fig:limonin_results}
\end{figure}

Per-iteration results for \textsc{All}-strategy adaptive calculations for limonin are shown in Figure~\ref{fig:limonin_results}, with a layout equivalent to those for heptane. We omit from consideration also the subgrid $\Pi_{\text{GCM}}$, since the results available up to practicality are not usefully comparable. 
We begin with approximations of the MP2/cc-pCVQZ total energy. In their final iterations, all such calculations, either approach ($\Pi_{\text{non-ML}}$ and $\Pi_{\text{BS}}$) or comfortably exceed ($\Pi_{\text{AbI}}$ and $\Pi$) chemical accuracy with respect to the reference result. Their respective error indicators are still sometimes prone to overestimation of the error, even late in the calculation. There does not seem to be either a clear speedup relative to the reference, or a clear winner amongst the subgrids trialled: the $\Pi_{\text{AbI}}$ subgrid is the first to reach chemical accuracy, but the errors produced by truncations for $\Pi$ seem to decay slightly more regularly.

For approximations of atomization energies, it is again interesting to see that while the error indicators are broadly reliable, they do underestimate the true error in the very final iterations. It is difficult to be conclusive here. As regards accuracy, neither calculation produces an agreement with the reference value better than \qty{0.01}{\hartree}. 
Here, it may be the case that further iterations would produce a closer agreement, but again, since we would expect the truncations to approach the true $V^{\text{atom}}$ --- or, given the restricted grid, the approximation of $V^{\text{atom}}$ for CCSD(T)/cc-pCV8Z --- this would again depend on the accuracy of the G4(MP2) energy itself.

Although we are limited in our ability to draw conclusions by the quality of the available reference results, it does still seem clear that ML-SUPANOVA truncations are systematically improvable. It is important to note that the highly cyclic structure of the limonin system is not an issue. For atomization-energy calculations, there is still no clear performance benefit here compared to a simple application of G4(MP2). But the G4(MP2) cost is here significantly higher than that for heptane, and there must quickly come a point where even G4(MP2) will become prohibitively expensive. It is here that we still anticipate that the ML-SUPANOVA approach will truly come into its own.

\section{Concluding Remarks}\label{sec:conclusion}

In this article, we investigated the working equations of single- and multilevel energy-based fragmentation methods. The technique of Möbius inversion, well-known to the cluster expansion methods community and easily extensible from work there, led to the unifying expansion form~\eqref{eq:poset_grid_multilevel}. As well as reproducing the energy equations of a number of existing multilevel fragmentation methods, this expansion provides for a further extension again, namely the efficient approximation of individual many-body terms in the broad style of a composite method, using a framework based on the combination technique.

The algorithm we have given here is clearly capable of refining ML-SUPANOVA truncations in a way that produces a systematic increase in accuracy. However, some questions do remain regarding their function as a mechanism for computational speedup. We can reasonably anticipate that application of ML-SUPANOVA is likely to pay dividends for much larger systems than those we have considered here. A further investigation of ``true'' adaptivity, with, e.g., the \textsc{Threshold} strategy, would be appropriate. But the primary issue will remain one of validation, given the inherent unavailability of true reference solutions. Two complementary approaches suggest themselves. The first would be an effort to develop rigorous bounds on the decay properties of the MBE subproblem potentials $\tilde{V}_{\bm{u}}$, and then also on their more general multilevel equivalents. The second would be detailed practical comparison against experimental results. For this, it would be necessary to obtain from truncated ML-SUPANOVA expansions also point evaluations of the nuclear gradient, rather than just of total and/or atomization energies. This adjustment, very well-known in existing fragmentation methods, requires in our setup just a different choice of linear operator.

Were it not already clear, let us stress that the combinatorial tools we have applied here are textbook knowledge. Nevertheless, they allowed us to identify and avoid a subtle but important issue in the existing (ML-)BOSSANOVA framework. It seems likely that careful investigation of more recent developments in combinatorics, order theory, and the theory of graph convexities may lead to helpful further insights. It would be particularly interesting to explore whether and to what extent it might be possible to derive a general scheme for the construction of a interaction graph $G$ that both captures the topological structure of a molecular system while also ensuring that $\mathcal{M}_g\pdbrack{G}$ is truly a convex geometry, but this may prove a difficult task.

As mentioned above, we remain particularly interested in the use of subproblem potentials employing quantum embedding, which are explicitly catered for by our formal setup. For example, if one were to build a set of potentials that applied either a WFT-in-WFT or WFT-in-DFT quantum embedding scheme, it would be natural to use the \emph{ab initio} theory index $m$ and basis set index $p$ to specify the treatment of the embedding region. There are other possibilities; one might also consider a multilevel treatment of the environment region. Here, there will be practical issues, since the cost of evaluating each $V_{\pdparen{\bm{u}, m, p}}$ will presumably scale superlinearly in the size of the full system.

\section*{Acknowledgements}

The authors gratefully acknowledge support from the Hausdorff Center for Mathematics (DFG project number 390685813) and the Collaborative Research Centre 1060 (DFG project number 211504053). The calculations reported were performed using the high-performance computing resources of Fraunhofer SCAI.
We thank co-workers at the Institute for Numerical Simulation and at Fraunhofer SCAI, particularly Astrid Maa{\ss}, for many fruitful discussions. David Feller provided a helpful discussion regarding some of the basis sets used here.

\appendix
\section{Proofs}\label{sec:proofs}

In this section, we provide proofs for some of the results given in the text. With the exception of the first proof (for Prop.~\ref{prop:meets_of_generating_antichain}), these are drawn almost verbatim from~\cite{barker2024}, with some small adjustments for clarity and also consistency within this article. We use some basic terminology and concepts not explicitly introduced above; see, e.g.,~\cite{rota1964, aigner1997, stanley2012}.

The first proof uses just the same standard results and idea of proof as on~\cite[p.~7481]{lafuente2005}.

\begin{proof}[Proof of Proposition~\ref{prop:meets_of_generating_antichain}]
  If $s \notin I$, then immediately $D^{\pdparen{I}}_s = 0$, so let $s \in I$. Define $J \coloneq I \cup \pdbrace{\hat{1}_J}$, such that $\hat{1}_{J} >_J t$ for any $t \in I$. By Lemma~\ref{lem:equivalent_combination_coefficient}, $D^{\pdparen{I}}_s = -\mu_J\pdparen{s, \hat{1}}$. Note that the interval $\pdbrack{s, \hat{1}}_J$ is itself a meet semilattice, so a lattice by~\cite[Prop.~3.3.1]{stanley2012}. If $\mu_J\pdparen{s, \hat{1}} = -D^{\pdparen{I}}_s \neq 0$, then $s$ must be a meet of coatoms of $[s, \hat{1}]_J$ by~\cite[Cor.~3.9.5]{stanley2012}, and these coatoms are in $A$ by construction.
\end{proof}

\begin{proof}[Proof of Lemma~\ref{lem:unique_consistent_order_ideal}]
  Let $P$, $Q$, $I'$, and $A'$ be as in the statement of the claim.  Further, let $I$ be some finite order ideal of $P$ which is combination-consistent with $I'$. Since $I$ is finite, it is generated by an antichain $A \subseteq I$; we want to show that $A = A'$.

  The proof is by contradiction. Suppose that $A \neq A'$. Then there exists some $a \in A$ such that $a \notin A'$, or otherwise, some $a' \in A'$ such that $a' \not\in A$. Begin by supposing the former.  Since $a$ is maximal in $I$, we have $D^{\pdparen{I}}_a = \mu_P\pdparen{a, a} = 1$ by definition. If $a \not\in Q$, then by~\eqref{eq:combination_consistency}, also $D^{\pdparen{I}}_a = 0$, a contradiction. So $a \in Q$. It must be that $a \in I'$, for otherwise, $\hat{D}^{\pdparen{I'}}_a = 0$, again a contradiction. Since $a \not\in A'$, there exists some $a^\dagger \in A'$ such that $a^\dagger > a$. Just as above, $\hat{D}^{\pdparen{I'}}_{a^\dagger} = 1$, but since $a$ is maximal in $I$, we know that $a^\dagger$ cannot also be an element of $I$, so also $D^{\pdparen{I}}_{a^\dagger} = 0$, contradicting~\eqref{eq:combination_consistency}.

  It must then be that there exists $a' \in A'$ such that $a' \not\in A$. Just as above, $\hat{D}^{\pdparen{I'}}_{a'} = 1$. If $a' \not\in I$, then $D^{\pdparen{I}}_{a'} = 0$, contradicting~\eqref{eq:combination_consistency} since $a' \in A' \subseteq Q$. So $a' \in I$. Then there exists some $a^\dagger \in A \subseteq P$ such that $a^\dagger > a'$, and $D^{\pdparen{I}}_{a^\dagger} = 1$. Since this is nonzero, it must be that $a^\dagger \in Q$, by~\eqref{eq:combination_consistency}. But since $\hat{D}^{\pdparen{I'}}_{a^\dagger}$ is nonzero only when $a^\dagger \in I'$, we have that $a'$ is not maximal in $I'$, so $a' \notin A'$, a contradiction.
\end{proof}

In the following proof, we make use again of the same ideas as on~\cite[p.~7481]{lafuente2005}, particularly for direction $\pdparen{\Leftarrow}$. We correct here a minor omission in the proof given in~\cite{barker2024}, namely, a trivial case when $s \notin I$, and also correct a minor error related to the use of the Crosscut Theorem.

\begin{proof}[Proof of Theorem~\ref{thm:combination_consistency}]
  $\pdparen{\Leftarrow}$ Let $P$ be a locally finite meet semilattice with a $\hat{0}$, and let $Q \subseteq P$ be a meet subsemilattice of $P$. Let $I'$ be an arbitrary finite order ideal of $Q$ with generating antichain $A'$, and let $I = \pdprod{A'}_P$ be the finite order ideal of $P$ generated by $A'$ in $P$.

  Fix some $s \in P$. If $s \notin I$, then clearly $D^{\pdparen{I}}_{s} = 0$, and if also $s \in Q$, similarly $\hat{D}^{\pdparen{I'}}_s = 0$. So assume $s \in I$. Define $J \coloneq I \cup \pdbrace{\hat{1}_J}$ and $J' \coloneq I' \cup \pdbrace{\hat{1}_{J'}}$, such that $\hat{1}_{J} >_J t$ for any $t \in I$, and $\hat{1}_{J'} >_{J'} t'$ for any $t' \in I'$. If $s$ is a meet of coatoms of $J$, then also $s \in J'$, since each such coatom is in $A'$ by construction and $Q$ is a meet subsemilattice of $P$. Further, noting that the coatoms of $\pdbrack{s, \hat{1}_J}_J$ and $\pdbrack{s, \hat{1}_{J'}}_{J'}$ must be the same, it follows that $D^{\pdparen{I}}_s = -\mu_J\pdparen{s, \hat{1}_J} = -\mu_{J'}\pdparen{s, \hat{1}_{J'}} = \hat{D}^{\pdparen{I'}}_s$ by Lemma~\ref{lem:equivalent_combination_coefficient} and the Crosscut Theorem~\cite[Cor.~3.9.4]{stanley2012}, using those sets of coatoms as the subset $X$ in the statement of the latter result. If $s$ is not a meet of coatoms of $J$, then $D^{\pdparen{I}}_s = 0$ by~\cite[Cor.~3.9.5]{stanley2012}; if here also $s \in Q$, then similarly $\hat{D}^{\pdparen{I'}}_s = 0$, and we are done.

  For $\pdparen{\Rightarrow}$, the proof is by contradiction. Let $P$ be a locally finite meet semilattice with a $\hat{0}$, and let $Q$ be a combination-consistent subposet of $P$.  Suppose that $Q$ is, however, not a meet subsemilattice of $P$. Then there exist (at least) two distinct elements $t, t' \in Q$ such that $t \wedge_P t' \not\in Q$. Let $I' = \pdprod{\pdbrace{t, t'}}_Q$ be the finite order ideal of $Q$ generated by those two elements. Since $Q$ is combination-consistent with $P$, there exists a finite order ideal $I$ of $P$ that is combination-consistent with $I'$. By Lemma~\ref{lem:unique_consistent_order_ideal}, this $I = \pdprod{\pdbrace{t, t'}}_P$. Form as above $J \coloneq I \cup \pdbrace{\hat{1}_J}$. Since the only subset of $\pdbrace{t, t'}$ whose meet is $t \wedge_P t'$ is $\pdbrace{t, t'}$ itself, it follows from the Crosscut Theorem~\cite[Cor.~3.9.4]{stanley2012} that $D^{\pdparen{I}}_{t \wedge_P t'} = -\mu_J\pdparen{t \wedge_P t', \hat{1}_J} = 1$. But, since $I$ is combination-consistent with $I'$, and since $t \wedge_P t' \not\in Q$, also $D^{\pdparen{I}}_{t \wedge_P t'} = 0$, a contradiction.
\end{proof}

\section{Abstract Cost Model}\label{sec:abstract_cost_model}

In this section, we outline the abstract cost model used in the calculations described above. The model is, in essence, just a collection of asymptotic expressions for the costs of individual algorithms, see again Section~\ref{sec:fundamentals} and, e.g.,~\cite{szabo1996, cances2003, echenique2007, helgaker2000, jensen2017}, with the inclusion of some tunable constant factors. We simply state the model here, and refer the interested reader to~\cite{barker2024} for more details as well as some additional supporting references.

The computational cost required to evaluate the total energy for a molecular system at some level of \emph{ab initio} theory is ultimately a function of the specific molecular geometry as well as the selected basis set. The number and species of atoms determines the number of basis functions $N_{\text{AO}}$. All of the quantum-chemistry algorithms discussed here involve the calculation of formally $\mathcal{O}\pdparen{N_{\text{AO}}^4}$ two-electron integrals (ERIs) over atomic-orbital basis functions, but very many of these will usually be negligibly small~\cite{gill1994, echenique2007}. Since ERI negligibility is most commonly an expression of spatial separation between particles, this effect will be less pronounced for a more compact molecular system such as an individual fragment. In the following, we assume knowledge or at least an estimate of the number $N_{\text{ERI}}$ of non-negligible ERIs. We also include a tuneable factor $f_{\text{ERI}}$ that can be used to weight the cost of their calculation as $f_{\text{ERI}}N_{\text{ERI}}$.

We assume the use of a direct SCF algorithm~\cite{almloef1982} for Hartree-Fock calculations. The work at each iteration involves first calculating all distinct ERIs, and then solving the Roothaan-Hall equations. Assuming that some $N^{\text{HF}}_{\text{iter}}$ iterations are required on average, then, the cost of a Hartree-Fock calculation is modelled as
\begin{equation*}
  \mathcal{C}^{\text{HF}}\pdparen{\cdots}
  =
  N^{\text{HF}}_{\text{iter}}
  \pdparen{
    f_{\text{ERI}}N_{\text{ERI}}
    +
    N_{\text{AO}}^3
  },
\end{equation*}
cf\@. the cost model in~\cite{chinnamsetty2018}. We hide the various arguments and parameters of the cost function inside an ellipsis for notational simplicity.

It is common to see the cost of an MP2 calculation stated as $\mathcal{O}\pdparen{N_{\text{AO}}^5}$, but this can be expressed a little more tightly. One necessarily begins with a Hartree-Fock calculation, which provides the first term in the following formula. The remaining calculation depends on the number of virtual orbitals $N_{\text{virt}}$, and also on the number of correlated orbitals $N_{\text{corr}}$, which is just the number of occupied orbitals $N_{\text{occ}}$ for an all-electron calculation but somewhat lower for a calculation made under the frozen-core approximation. The actual evaluation of the MP2 contribution term requires only $\mathcal{O}\pdparen{N^2_{\text{corr}}N^2_{\text{virt}}}$ operations, cf.~\cite{szabo1996}, giving the second term below. However, these involve ERIs over molecular orbitals, which must be computed from the atomic-orbital ERIs. After these latter are explicitly calculated (third term), four tensor contractions (remaining terms) deliver the final tensor of molecular ERIs; for details on this transformation, see, e.g.,~\cite{wong1996}. Ordering these optimally under the assumption that $N_{\text{corr}} < N_{\text{virt}}$, then,
\begin{multline*}
  \mathcal{C}^{\text{MP2}}\pdparen{\cdots}
  =
  \mathcal{C}^{\text{HF}}\pdparen{\cdots}
  + N^2_{\text{corr}}N^2_{\text{virt}}
  + f_{\text{ERI}}N_{\text{ERI}}\\
  + N_{\text{corr}}N_{\text{ERI}}
  + N^2_{\text{corr}}N_{\text{AO}}^3
  + N^2_{\text{corr}}N_{\text{virt}}N_{\text{AO}}
  + N^2_{\text{corr}}N^2_{\text{virt}}N_{\text{AO}}.
\end{multline*}
We model the costs of a general $n$th-order coupled cluster calculation more simply, as
\begin{equation*}
  \mathcal{C}^{\text{CC(n)}}\pdparen{\cdots}
  =
  f_{\text{ERI}}N_{\text{ERI}}
  + N_{\text{AO}}\pdparen{N_{\text{corr}} + N_{\text{virt}}}^4
  + N^{\text{CC}}_{\text{iter}}N^n_{\text{corr}}N^{n+2}_{\text{virt}};
\end{equation*}
that is, as the cost of calculating non-negligible atomic ERIs (first term), transforming them into molecular ERIs (second term, modelled without as much detail as for MP2), and then iteratively solving the necessary amplitude equations, which is assumed to require $N^{\text{CC}}_{\text{iter}}$ iterations on average; for asymptotic costs of coupled cluster calculations, see again, e.g.,~\cite{kallay2001, kallay2005, bartlett2007, karton2022}. This assumes that the cost difference of the frozen-core approximation can be captured by replacing $N_{\text{occ}}$ by $N_{\text{corr}}$. Finally, by extension, the cost of evaluating such a calculation but also with a perturbative approximation for the effects of subsequent-order excitations is modelled as
\begin{equation*}
  \mathcal{C}^{\text{CC(n)(n+1)}}\pdparen{\cdots}
  =
  \mathcal{C}^{\text{CC(n)}}\pdparen{\cdots}
  +
  N^{n+1}_{\text{corr}}N^{n+2}_{\text{virt}}.
\end{equation*}

To obtain the abstract costs used in this article, we used the factor values $N^{\text{HF}}_{\text{iter}} = 15$, $N^{\text{CC}}_{\text{iter}} = 15$, and $f_{\text{ERI}} = 50$. We used PySCF to derive values of $N_{\text{AO}}$, $N_{\text{corr}}$, and $N_{\text{virt}}$, and estimated $N_{\text{ERI}}$ as per the standard Cauchy-Schwarz bound~\cite{haeser1989}, with the ERIs required to evaluate the bound calculated using \textsc{libcint}~\cite{sun2015}. ERIs were deemed negligible below a bound of \qty{1e-12}, evaluated per basis shell, and counted up to permutational symmetry.

\section{Heuristic Fragmentation Algorithm}\label{sec:fragmentation_algorithm}

In this section, we describe informally the heuristic algorithm used to derive the fragmentations used in Section~\ref{sec:experiments}. Particularly similar heuristics are used in~\cite{collins2012, le2012}. As an initial candidate fragmentation, we take $F = \pdbrace{\pdbrace{i}}_{i=1}^M$, so assigning the index of each atom in the system to its own fragment. Fragments are then repeatedly coalesced in two separate phases.

The first phase collects any bonded hydrogen atoms with their ``parents'', and removes the possibility that calculating any subproblem potential $V_{F_{\bm{u}}}$ would require severing a double- or higher-order bond. Iterating over pairs, two fragments $F_i$ and $F_j$ are combined if either a) a non-single covalent bond exists between an atom in each fragment, or b) a hydrogen atom in one fragment is bonded to an atom in the other fragment. This is repeated until a steady state is obtained.

The second phase is intended to avoid the possibility whereby two covalent bonds leading to the same external atom must be severed in the calculation of some $V_{F_{\bm{u}}}$, which would result in the introduction of two link atoms in very close proximity to one another. We iterate again over pairs of fragments, looking for two particular cases. In the first case, a single atom in one fragment $F_i$ is bonded to two atoms in another fragment $F_j$, where the two fragments are adjacent in the quotient graph. In the second case, atoms in two fragments $F_i$ and $F_j$ are bonded, and either of those two atoms is also bonded to an atom in a third fragment $F_k$ that is itself adjacent to both $F_i$ and $F_j$. Again, the second phase terminates when a steady state is encountered.

Note that the output of this algorithm is dependent upon the ordering of the fragments, and on how fragment pairs are enumerated at each iteration of each phase.

\printbibliography


\end{document}